\newcommand{\mean}{\mathds{E}}
\newcommand{\bigO}{\mathcal{O}}
\newcommand{\mF}{\mathcal{F}}
\newcommand{\mG}{\mathcal{G}}
\newcommand*{\eg}{\textit{e.g.}\@\xspace}
\newcommand{\laplace}{\mathcal{L}}
\newcommand{\ex}{\xi}
\newcommand{\cat}{\operatorname{Cat}}
\newtheorem*{lemma*}{Lemma}
\newtheorem*{proposition*}{Proposition}
\newtheorem*{theorem*}{Theorem}
\title{Tree walks and the spectrum of random graphs} %TODO Please add
\author{Eva-Maria Hainzl}
{TU Wien, Austria}
{}
{}
{This work was partially founded by the Austrian Science
Foundation FWF, projects F50-02, F55-02.}
\author{\'Elie {de Panafieu}}
{Nokia Bell Labs, France}
{}
{https://orcid.org/0009-0002-1386-971X}
{}
\authorrunning{E-M. Hainzl and \'E. de Panafieu} %TODO mandatory. First: Use abbreviated first/middle names. Second (only in severe cases): Use first author plus 'et al.'
\keywords{Spectrum of random matrices, generating functions} %TODO mandatory; please add comma-separated list of keywords
\begin{document}

\maketitle

%TODO mandatory: add short abstract of the document
\begin{abstract}
It is a classic result in spectral theory that the limit distribution of the spectral measure of random graphs $G(n,p)$ converges to the semicircle law in case $np$ tends to infinity with $n$. The spectral measure for random graphs $G(n,c/n)$ however is less understood. In this work, we combine and extend two combinatorial approaches by Bauer and Golinelli (2001) and Enriquez and Menard (2016) and approximate the moments of the spectral measure by counting walks that span trees.
\end{abstract}

        \section{Introduction}

Random matrix theory studies the spectrum of random matrices
and has found many applications, including in
physics \cite{wigner1967random},
wireless communication \cite{tulino2004random}
and numerical analysis \cite{edelman2005random}.
A fundamental result of this field is
that the limit distribution of the spectral measure
of so-called Wigner matrices converges to the semicircle law~\cite{wigner1955characteristic, wigner1958distribution} and it is worth mentioning that a common proof of this theorem by the moment method relies on counting closed walks on trees (\eg \cite{furedi1981eigenvalues}).
This universal law has been extended to several other classes,
such as adjacency matrices of random regular graphs 
\cite{mckay1981expected,tran2013sparse}
and Erd\H{o}s-R\'enyi random graphs $G(n,p)$ when $pn \rightarrow \infty$. In particular, Bauer and Golinelli~\cite{physics} pointed out the importance of the spectral measure of adjacency matrices of random graphs and explained how to compute the moments by counting walks on trees. Zakharevich~\cite{inna_z} picked up on the approach and showed further that the spectral distribution of $G(n,c/n)$ converges to a limit distribution $\mu^c$ which has infinite support. However, for $p=c/n$, several technical conditions of classic theorems in probability theory are not met
such that one could apply standard techniques and despite recent progress \cite{spec1,spec2,spec3,spec4,spec5}, $\mu^c$ remains an enigma.
In \cite{Bushygraphs}, Enriquez and Ménard returned to combinatorial methods and computed several terms of the asymptotic expansion, as $c$ tends to infinity, of the moments of the normalized spectral measures
\[
    \mu_n^c =
    \frac{1}{n}\sum_{\lambda \in Sp(c^{-1/2}A(G(n,c/n)))}\delta_\lambda
\]
where $A(G(n,c/n))$ is the adjacency matrix of a random graph $G(n,c/n)$.
We go along the steps in the computation of moments of this measure for clarity and start with 
\[
    m_{\ell}(\mu^c_n) = \sum_{G} \mathbb{P}\left[G\left(n,\frac{c}{n}\right) = G\right] \cdot \frac{1}{n}\sum_{\lambda \in Sp\left(c ^{-1/2} A(G)\right) }\lambda^{\ell}.
\] 
This formulation reduces to counting closed walks in $G$, since the sum of the eigenvalues to the power $\ell$ is just the trace of the matrix to the power $\ell$,
and a value $\left(A^\ell\right)_{i,i}$ on the diagonal of this matrix is the number of closed walks of length $\ell$ starting at the vertex $i$. That is,
\[
    \sum_{\lambda \in Sp\left(c ^{-1/2} A(G)\right)} \lambda^{\ell} = \left(\frac{1}{c }\right)^{\ell/2}\mbox{tr}\left(A(G)^{\ell}\right) = \sum_{\text{closed walk }(v_1,v_2,\dots v_{\ell}, v_1)\in G} \left(\frac{1}{c }\right)^{\ell/2}.
\]
Thus, the moment equals
\[
    m_{\ell}(\mu^c_n) =
    \frac{1}{n}\frac{1}{c^{\ell/2}}
    \sum_{(v_1,v_2,\dots v_{\ell})\in [n]^{\ell}}
    \mean \left[X_{v_1,v_2}X_{v_2,v_3}\cdots X_{v_{\ell},v_1} \right],
\]
where $X_{v_i,v_j}$ is the random variable taking the value $1$ if the edge $(v_i,v_j)$ is in the graph and $0$ otherwise.
Observe that if a closed walk $(v_1, \ldots, v_{\ell}, v_1)$
contains $e$ distinct edges, then
$\mean \left[X_{v_1,v_2}X_{v_2,v_3}\cdots X_{v_{\ell},v_1} \right] = (c/n)^e$.
The number of closed walks on $[n]$ of length $\ell$
with $m$ vertices is bounded by $n^m m^{\ell}$.
Since the total number of vertices is bounded by the length,
we have $n^m m^{\ell} \leq n^m \ell^{\ell}$.
The contribution to the moment
of all such closed walks
containing $e$ distinct edges is bounded by
\[
    \frac{1}{n} \frac{1}{c^{\ell/2}}
    n^m \ell^{\ell}
    \left( \frac{c}{n} \right)^e
    =
    c^{e - \ell/2}
    \ell^{\ell}
    n^{m - e - 1}.
\]
We are considering a fixed moment $\ell$,
so this tends to $0$ with $n$ whenever $m < e + 1$, that is,
whenever the graph (necessarily connected)
induced by the closed walk is not a tree.
In particular, when $\ell$ is odd,
the induced graph cannot be a tree,
so the moment of order $\ell$ tends to $0$.

Let $w_{m, 2 \ell}$ denote the number of closed walks
of length $2 \ell$ spanning a tree with $m$ vertices.
We now consider the even moment of order $2 \ell$
and split the sum according to the number $m$
of distinct vertices in the closed walk
\[
    m_{2 \ell}(\mu^c_n) =
    \frac{1}{n} \frac{1}{c^{\ell}}
    \sum_{m = 1}^{\ell + 1}
    \binom{n}{m}
    \left( \frac{c}{n} \right)^{m-1}
    w_{m, 2 \ell}.
\]
Let us define the limit distribution
$\mu^c = \lim_{n \to +\infty} \mu^c_n$.
Then its odd moments are zero
and its moment of order $2 \ell$ is
\[
    m_{2 \ell}(\mu^c)
    =
    \lim_{n \to +\infty}
    \frac{1}{n} \frac{1}{c^{\ell}}
    \sum_{m = 1}^{\ell + 1}
    \binom{n}{m}
    \left( \frac{c}{n} \right)^{m-1}
    w_{m, 2 \ell}
    =
    \sum_{m = 1}^{\ell + 1}
    \frac{1}{c^{\ell - m + 1}}
    \frac{w_{m, 2 \ell}}{m!}.
\]
By identifying the generating functions  of $(w_{m,2\ell})_{\ell \geq 0}$, for  $m=\ell+1$ and $m = \ell$, as the Stieltjes transform of a specific measure, Enriquez and M\'enard were able to derive an approximation of the moments of the limit law and computational experiments showed that even the density of this measure approximated the shape of the histograms of eigenvalues of sampled matrices quite well. An extension of this approximation to the order $c^{-2}$ took considerable effort on several sides, including the combinatorics of closed walks on trees.

The aim of this paper is to provide further insight into what we call \emph{tree walks}, and consequently an efficient way to compute the numbers $w_{m,2\ell}$, for all $2\ell \geq 0$ and $0\leq m \leq \ell+1$, and their generating functions.
But as we delve further into their connection with the spectral measure, we come across surprising and beautiful identities involving the generating function of the Catalan numbers.

\cref{sec:main_results} presents the formal definition
of various tree walk families and our main results,
which are \cref{thm:strucT_k} and \cref{thm:expand}.
\cref{thm:strucT_k} expresses the generating function
of tree walks as a rational function
of the Catalan generating function.
\cref{thm:expand} gives several error terms
for an asymptotic approximation of $\mu^c$
as $c$ tends to infinity.
We also presents \cref{conj},
which states that this asymptotic approximation
could be extended to an arbitrary order,
turning it into a form of asymptotic expansion.
The proof of \cref{thm:strucT_k}
and \cref{thm:expand}
are given respectively in \cref{sec:decomp,sec:cons}.
Numerical experiments are provided in \cref{sec:computational}.

    \section{Main results} \label{sec:main_results}

Before we state our main results, let us clarify some definitions.

\begin{definition}[Tree walks]
A \emph{tree walk} of size $m$
is a walk on the complete labeled graph of size $m$ that
visits every node, starts and ends at the same node, and induces a tree.
More formally, a tree walk $W = (v_1,v_2,\dots, v_{\ell})$ is a sequence of $v_i \in [m]$ such that 
\[
    V:= \bigcup_{j \in [\ell]} \{v_j\},\quad E:= \bigcup_{j \in [\ell-1]}\{(v_j,v_{j+1})\}\cup \{(v_{\ell},v_1)\}
\]
define a labelled tree $T(W)$ with vertex set $V = [m]$ and edge set $E$. Further, we define $v_1$ to be the root of the induced tree $T(W)$. Thus, we talk freely about the root and leaves of $W$, when referring to the root and leaves of $T(W)$. We further stick to the convention that if the root has degree $1$ it is also a leaf of $T(W)$. \\
In the following, we study the number $w_{m,2\ell}$ of tree walks of length $2\ell$ that span a tree of size $m$ and the generating function
\[
    W(v,z) = \sum_{\ell, m \geq 0} w_{m,2\ell} \frac{v^m}{m!} z^{\ell}. 
\]
Since a walk of length $2 \ell$ spans a tree with at most $\ell+1$ vertices, we have $w_{m,2\ell} = 0$ if $\ell<m-1$  and for $\ell \geq 1$, we define $w_{0,2\ell}=w_{1,2\ell}=0$ and $w_{1,0}=1$, $w_{0,0}=0$.
\end{definition}

The ordinary generating function of the moments of $\mu^c$ is therefore given by
\begin{equation}\label{eq:ordGF} 
M_{\mu^c}(z) 
= \sum_{\ell \geq 0} m_{2\ell}(\mu^c)z^{2\ell} 
= \sum_{\ell \geq 0} \sum_{m = 0}^{\ell +1}w_{m,2\ell}\frac{c^m}{m!}c^{-\ell-1}z^{2\ell} 
= \frac{1}{c} \, W\left(c , \frac{z^2}{c}\right),
\end{equation}
where $m_0(\mu^c)=1$ as always.
However, we could have restructured $M_{\mu^c}(z)$ like Enriquez and Ménard in~\cite{Bushygraphs} as well. We just sum over the negative exponent $\xi = \ell+1-m$ of $c$ such that
\begin{equation}\label{eq:2nd_exp} M_{\mu^c}(z) = \sum_{\ell \geq 0} m_{2\ell}(\mu^c)z^{2\ell} = \sum_{\ell \geq 0} \sum_{m\geq 0} w_{m,2\ell}\frac{c^m}{m!}c^{-\ell-1}z^{2\ell} = \sum_{\ell \geq 0} \sum_{\xi \geq 0} \frac{w_{\ell-\xi+1,2\ell}}{(\ell-\xi+1)!} \frac{z^{2\ell}}{c^\xi}.
\end{equation}
This expansion in turn motivates the following definition. 

\begin{definition}[Excess of a tree walk] \label{def:excess}
    If an edge is traversed $2k$ times in a tree walk, then the \emph{excess} of the edge $e$ is defined as $\ex(e) = k-1$. 
    The \emph{excess} of a tree walk $W$ is the sum over the excess of all edges in the induced tree $T(W) = (V,E)$. Hence, it is half its length minus the number of edges of the tree, $\ex(W)\;= \;\ell - |E|$.
    An edge with positive excess is called an \emph{excess edge} and an edge without excess a \emph{simple edge}. We denote the generating function of tree walks with excess $\xi$ by
\[
    W_\xi(z) =
    \sum_{\ell \geq 0}
    \frac{w_{\ell - \xi + 1, 2 \ell}}{(\ell - \xi + 1)!} \, z^{\ell},
\]
% Version of the equation before 03/03/24
%    W_\xi(z) = \sum_{\ell \geq 0} \frac{w_{\ell+\xi-1,2\ell}}{(\ell+
%    \xi-1)!} \, z^{\ell},
where $w_{m, 2 \ell}$ is the number of tree walks of length $2\ell$ that span a tree of size $m$.
\end{definition}

Thus, the relation between the generating functions we have defined so far is
\[
    M_{\mu^c}(z) = \frac{1}{c} W\left(c,\frac{z^2}{c}\right) = \sum_{\xi\geq 0} \frac{1}{c^\xi} W_\xi\left(z^2\right).
\]

Bauer and Golinelli~\cite{physics} introduced
in the sequence $w_{m,2\ell}$
an additional parameter $d$
counting the number of times the walk
leaves the root.
This approach allowed them to compute the values $w_{m,2\ell}$
for $2\ell$ and $m$ up to $120$ \cite{seq},
and they conjectured a particular form for $w_{m,2\ell}$
that we prove in the next theorem.
When we translate this decomposition in generating functions,
an equation for $W(x,v,z)$ is obtained,
where $x$ marks the parameter $d$.
Unfortunately, this equation is not particularly amenable to classic analysis with complex analytic methods as it involves a Laplace transform.
Our approach on the other hand is reminiscent of the decomposition of graphs with given excess by Wright \cite{wright}. Not only do we prove a well founded recursion in~$z$ and~$v$, but we provide more insight into the structure of tree walks and their generating function. Most importantly, we compute closed expressions for $w_{m,2\ell}, \ell \geq 0$ for fixed (small) $m$ and prove a conjecture from~\cite{physics}.

\begin{theorem} \label{thm:strucT_k}
Let $C(z) = \frac{1 - \sqrt{1 - 4 z}}{2 z}$ denote
the generating function of the Catalan numbers,
and $W_\xi(z)$
denote the generating function of tree walks of excess $\xi$
from Definition~\ref{def:excess}.
Then $W_0(z) = C\left(z\right)$ and for any $\xi \geq 1$,
there are polynomials $(K_{\xi,s}(x))_{0 \leq s \leq 2\xi-2}$
with non-negative coefficients of degree $2\xi+s$
such that
\[
    W_\xi(z) =
    C\left(z\right)
    \sum_{s=0}^{2\xi-2}
    \frac{K_{\xi,s}\left(zC(z)^2\right)}
    {\big(1-zC(z)^2\big)^{s+1}}.
\]
In particular, denoting by $\cat(n)$ the $n$-th Catalan number,
we have
\[
    K_{\xi,2\xi-2}(x) = \cat(\xi-1)x^{4\xi-2}
    \qquad \text{and} \qquad
    K_{\xi,2\xi-3}(x) = (3\xi-1) \cat(\xi-1)x^{4\xi-3}.
\]
\end{theorem}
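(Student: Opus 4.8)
The plan is to prove \cref{thm:strucT_k} by handling $\xi=0$ directly and then obtaining every $W_\xi$, $\xi\ge 1$, from a combinatorial decomposition that isolates the part of a tree walk responsible for its excess.

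\textbf{The case $\xi=0$ and bushes.} A tree walk of length $2\ell$ and excess $0$ spans a tree with $\ell+1$ vertices and crosses every edge exactly twice, so for a fixed labelled tree rooted at its first vertex these walks are exactly the plane (left-to-right) embeddings of that tree. Hence $w_{\ell+1,2\ell}=(\ell+1)!\,\cat(\ell)$ — there being $\cat(\ell)$ rooted plane trees on $\ell+1$ nodes and $(\ell+1)!$ labellings — and $W_0(z)=\sum_{\ell\ge 0}\cat(\ell)z^\ell=C(z)$. The same observation shows that a subtree hanging off a tree walk and visited with excess $0$ (a \emph{bush}) contributes a factor $C(z)$ to the generating function; this is the single building block out of which all higher-excess walks are made.

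\textbf{The core decomposition.} For $\xi\ge 1$ I would peel a tree walk $W$ of excess $\xi$ around its \emph{core}: the minimal subtree of $T(W)$ containing the root and the endpoints of all excess edges, with every remaining simple degree-$2$ vertex suppressed. The edges of a core are of two types — the excess edges (at most $\xi$ of them, carrying total excess $\xi$) and the \emph{simple links} joining consecutive branch, excess or root vertices — and a count of minimal subtrees shows a core of excess $\xi$ has at most $2\xi-1$ simple links; only finitely many cores occur for each $\xi$. Reconstructing $W$ then consists of recording the interleaving of the crossings of the excess edges with the visits to core vertices, re-inflating each simple link into an honest path of simple edges carrying bushes (a sequence construction contributing a factor of the form $zC(z)^2/(1-zC(z)^2)$), and hanging a bush (a factor $C(z)$) at every visit to a core vertex. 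Summing these products over the finitely many cores yields
\[
    W_\xi(z)=C(z)\sum_{s=0}^{2\xi-2}\frac{K_{\xi,s}\!\left(zC(z)^2\right)}{\left(1-zC(z)^2\right)^{s+1}},
\]
with $s+1$ equal to the number of re-inflated simple links (whence $s\le 2\xi-2$), and $K_{\xi,s}$ the polynomial in $x=zC(z)^2$ collecting all contributions of cores with $s+1$ links. Non-negativity of the coefficients of $K_{\xi,s}$ is then automatic, since the decomposition realises $K_{\xi,s}$ as a genuine enumerating series of ``reduced'' tree walks, and $\deg K_{\xi,s}=2\xi+s$ falls out of a short accounting: each link contributes an $x$ to the numerator, each unit of excess a bounded number of $x$'s, and each internal branch vertex of the core one more, the heaviest cores hitting exactly $2\xi+s$. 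Equivalently — and this is the ``recursion in $z$ and $v$'' announced in the introduction — one can package the same decomposition as an induction on $\xi$: assuming the shape for all smaller excess, detach one unit of excess and check that the operation preserves the shape while raising the maximal $s$ by at most $2$.

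\textbf{The two top coefficients.} These come from the cores with the maximal number $2\xi-1$ of simple links. Any excess edge of excess $\ge 2$ and any missing branch vertex strictly lowers the link count, so a core attains $s=2\xi-2$ precisely when the $\xi$ excess edges all have excess $1$ and the subtree joining the root to their $\xi$ inner endpoints is a plane binary tree with $\xi$ leaves; there are $\cat(\xi-1)$ such plane topologies, each contributing the same monomial, giving $K_{\xi,2\xi-2}(x)=\cat(\xi-1)x^{4\xi-2}$. For $s=2\xi-3$ one enumerates the cores with $2\xi-2$ links: either the extremal shape with one simple link contracted away ($2\xi-1$ choices of the link) or the shape obtained by promoting one of the $\xi$ excess edges to excess $2$ ($\xi$ choices); each still carries the $\cat(\xi-1)$ plane topologies and one fewer unit of degree, and the tally $(2\xi-1)+\xi=3\xi-1$ yields $K_{\xi,2\xi-3}(x)=(3\xi-1)\cat(\xi-1)x^{4\xi-3}$.

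\textbf{The main obstacle.} The delicate part is the precise bookkeeping in the core decomposition. Because $C=1+zC(z)^2$, one and the same structural edge can be charged either as a bare $z^2$ or, after absorbing an adjacent bush, as a power of $x=zC(z)^2$, and the front factor $C(z)$ competes with the numerator for the same degrees; pinning down $\deg K_{\xi,s}=2\xi+s$ exactly — and, above all, showing that the two top coefficients are \emph{exactly} $\cat(\xi-1)x^{4\xi-2}$ and $(3\xi-1)\cat(\xi-1)x^{4\xi-3}$ — requires a complete census of the extremal and near-extremal cores together with a careful analysis of how the excess-edge crossings interleave with the bush excursions at a vertex visited many times. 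Making the core/bush decomposition literally bijective (so that it is manifestly a disjoint union of Cartesian products, which is what delivers non-negativity) is delicate at exactly those multiply-visited vertices; once that is in place, the remainder is routine manipulation under the change of variables $z=x/(1+x)^2$, $C=1+x$.
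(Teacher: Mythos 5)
Your overall strategy is the same as the paper's: what you call the \emph{core} is essentially the paper's \emph{kernel walk} (the finite object left after pruning excess-free subtrees, re-rooting, and suppressing degree-2 vertices on simple edges), and the reconstruction by re-inflating each simple link into a sequence (factor $1/(1-zC(z)^2)$ after absorbing bushes) and hanging a bush $C(z)$ at every visit is exactly the paper's Lemma on $W_\xi(z)=\tfrac{C(z)}{1-zC(z)^2}K_\xi\bigl(\tfrac{1}{1-zC(z)^2},1,zC(z)^2\bigr)$. The finiteness of cores, the bound $s\le 2\xi-2$, the degree bound $2\xi+s$ (half-length of a kernel is at most $\xi$ plus its edge count), and the identification of the \emph{optimal} cores with plane binary trees carrying pendant excess-1 edges at their $\xi$ leaves all match the paper. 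For the optimality claim the paper needs a five-case exchange argument (any kernel not of that shape can be modified to gain a simple edge); you assert this rather than prove it, but you correctly flag it as the required ``census''.

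The genuine gap is in your computation of $K_{\xi,2\xi-3}$. First, your second family is not near-optimal: a kernel containing an edge of excess $\ge 2$ can always be modified to gain \emph{two} simple edges (split the heavy edge, spawning a new excess-1 edge elsewhere), so it has at most $2\xi-4$ simple edges and never contributes to $K_{\xi,2\xi-3}$. All near-optimal kernels have exactly $\xi$ excess edges, each of excess $1$. Second, your census misses a whole family — binary trees with $\xi-1$ leaves into which an incoming excess edge followed by a simple edge is grafted at one of the $2\xi-3$ inner vertices — and, more importantly, it ignores that a near-optimal induced tree can be the image of \emph{more than one} kernel walk, because the two excursions across an excess edge can interleave in two ways with the rest of the walk at the shared vertex (e.g.\ $vuvwvuv$ versus $vuvuvwv$). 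This multiplicity is exactly why the paper's three sub-counts are $\tfrac{\xi-2}{2}\cat(\xi-1)$, $\tfrac{3\xi}{2}\cat(\xi-1)$ and $\xi\cat(\xi-1)$ rather than integers times $\cat(\xi-1)$ per shape. Your tally $(2\xi-1)+\xi=3\xi-1$ lands on the right total only by coincidence; as a derivation it does not survive scrutiny. The same interleaving issue is what makes ``each core determines its walk'' true for the optimal family but false in general, so it must be addressed before the coefficient extraction is complete.
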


We establish a recursion for the polynomials $K_{\xi,s}(x)$ in Section~\ref{sec:decomp}.
This enables the successive computation of three quantities. First, the generating function $W_\xi(z)$ for any $\xi$, then the series $M_{\mu^c}(z)$ up to an arbitrary degree in $c$, given sufficient computational power, and finally the moments $m_{2\ell}$.

Our next theorem significantly extends Theorem 3 from~\cite{Bushygraphs}.
It approximates $\mu^c$ for large $c$.
There are many notions of convergence for measures.
The one we consider here is the convergence
of all moments
(restricting to the even ones since the odd ones vanish).
Further, when looking at the limit
of a sequence of random variables,
it is common to rescale them by their mean and standard deviation.
Here, the rescaling takes the form of a dilation operator
$\Lambda_\alpha$, for $\alpha > 0$.
This operator transforms a measure $\mu$
into the measure $\Lambda_\alpha(\mu)$
satisfying for every Borel set $A$
\[
    \Lambda_{\alpha}(\mu)(A) = \mu(A/\alpha).
\]

\begin{theorem}\label{thm:expand}
Let $m_{\ell}(\mu)$ denote the $\ell$-th moment of a measure $\mu$
and $\Lambda_\alpha$ the dilation operator
defined above.
Then as $c \to \infty$, it holds for all $\ell \geq 0$ that
\[
    m_{2 \ell}(\mu^c) =
    m_{2 \ell} \bigg(
    \Lambda_{f(1/c)} \bigg(
    \sigma + \sum_{i=1}^5\frac{1}{c^i}\sigma_i
    \bigg) \bigg)
    + \bigO \left(\frac{1}{c^6}\right)
\]
where $f(1/c) = 1+\frac{1}{2c} + \frac{3}{8c^2}+\frac{29}{16c^3}+\frac{1987}{128c^4}+\frac{47247}{256c^5}$, $\sigma$ is the semicircle law and all $\sigma_1, \sigma_2, \dots \sigma_5$ are signed measures explicitly given in Section~\ref{sec:computational}, with total mass $0$.
\end{theorem}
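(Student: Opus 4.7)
The plan is to combine Theorem \ref{thm:strucT_k} with equation \eqref{eq:2nd_exp} to produce an explicit asymptotic expansion of $M_{\mu^c}(z)$ in powers of $1/c$, and then identify this expansion as the moment generating function of a dilated signed measure $\sigma + \sum_i \sigma_i / c^i$.

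Starting from the identity $M_{\mu^c}(z) = \sum_{\xi \geq 0} W_\xi(z^2)/c^\xi$ in \eqref{eq:2nd_exp}, I would first observe that for each fixed $\ell$ only the finitely many $\xi \leq \ell$ contribute to $[z^{2\ell}] M_{\mu^c}(z)$, so truncating at $\xi = 5$ introduces at most an $\bigO(1/c^6)$ error in $m_{2\ell}(\mu^c)$. Using Theorem \ref{thm:strucT_k} together with the Catalan identities $z^2 C(z^2)^2 = C(z^2) - 1$, $1 - z^2 C(z^2)^2 = 2 - C(z^2)$, and $1 - 2 z^2 C(z^2) = \sqrt{1 - 4 z^2}$, each $W_\xi(z^2)$ for $\xi \leq 5$ can be rewritten as an explicit rational function in $C := C(z^2)$ and $\sqrt{1 - 4z^2}$, which is the form most amenable to the manipulations that follow.

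Since dilation by $\alpha$ multiplies the $2\ell$-th moment by $\alpha^{2\ell}$, the theorem is equivalent to the generating function identity
\[
    \sum_{\xi=0}^{5}\frac{W_\xi(z^2)}{c^\xi}
    =
    C\!\left(f(1/c)^2 z^2\right)
    + \sum_{i=1}^{5}\frac{S_i\!\left(f(1/c)^2 z^2\right)}{c^i}
    + \bigO\!\left(\frac{1}{c^6}\right),
\]
where $S_i(y) = \sum_{\ell \geq 1} m_{2\ell}(\sigma_i)\, y^\ell$ is the even-moment generating function of $\sigma_i$; the vanishing total mass $S_i(0) = 0$ is automatic because $W_\xi(0) = 0$ for every $\xi \geq 1$. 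Writing $f(1/c) = 1 + \sum_{i \geq 1} f_i / c^i$, setting $y = f(1/c)^2 z^2$, and expanding both sides in $1/c$ produces a triangular system: at each order $1/c^i$ the coefficient $f_i$ and the series $S_i(y)$ are jointly determined by requiring $S_i$ to take the canonical ``nice'' form in which the leading singularity at $y = 1/4$ matches the true spectral edge of $\mu^c$, rather than contributing a spurious derivative-of-semicircle term. A direct computation yields the rationals $\tfrac{1}{2}, \tfrac{3}{8}, \tfrac{29}{16}, \tfrac{1987}{128}, \tfrac{47247}{256}$ of $f(1/c)$ and explicit rational expressions for each $S_i$ in $C(y)$ and $\sqrt{1 - 4y}$.

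The final step is Stieltjes–Perron inversion. Because $C(y)$ together with its derivatives are the moment generating functions of the semicircle density and its polynomial perturbations supported on $[-2, 2]$, while the factor $\sqrt{1 - 4y}$ corresponds (up to polynomial factors) to atoms at the endpoints $\pm 2$, each $S_i(y)$ decomposes as a finite linear combination of such building blocks, which is precisely the signed measure $\sigma_i$ displayed in Section~\ref{sec:computational}. The main obstacle is the bookkeeping: one must apply the recursion of Section~\ref{sec:decomp} to compute the polynomials $K_{\xi,s}$ for all $\xi \leq 5$, and then carry out the $1/c$-expansion through five orders while preserving the rational-function structure in $C$; once this mechanical work is done, the identification of $f(1/c)$ and $\sigma_1, \ldots, \sigma_5$ is essentially automatic.
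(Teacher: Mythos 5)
Your architecture is the paper's: truncate $M_{\mu^c}(z)=\sum_{\xi\ge0}c^{-\xi}W_\xi(z^2)$ at $\xi=5$, rescale $z\mapsto z/f(1/c)$, re-expand in $1/c$, and apply Stieltjes--Perron inversion to each coefficient (the paper phrases the rescaling through $p(1/c)=f(1/c)^2$ and computes $V_i(z)=[c^{-i}]M_{\mu^p}(\sqrt z)$ via Fa\`a di Bruno's formula, but that is the same manoeuvre). The gap is at the crux: how $f$ is determined and why the resulting coefficients are the moment series of the specific measures of Section~\ref{sec:computational}. By Theorem~\ref{thm:strucT_k} and the identity $1-yC(y)^2=C(y)\sqrt{1-4y}$, the order-$c^{-i}$ coefficient after rescaling is a priori $C(y)$ times a rational function of $yC(y)^2$ with a pole of order up to $2i-1$ at $y=1/4$ in the variable $(1-4y)^{-1/2}$. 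The single new parameter available at order $i$ (the coefficient $f_i$) enters only through $yC'(y)$, which carries a pole of order one; generically one scalar cannot cancel a pole of order $2i-1$. That one power series $f$ nevertheless achieves complete cancellation at every order $i\le5$ --- leaving $V_i(y)=C(y)Q_i(yC(y)^2)$ with $Q_i$ a polynomial --- is the entire nontrivial content here; it is established only by the explicit computation in Proposition~\ref{cor:Vi}, and the paper's $\tilde W_2$ example shows it is not automatic. Your ``triangular system'' with the criterion that ``the leading singularity at $y=1/4$ matches the true spectral edge'' neither defines $f_i$ unambiguously nor shows that the higher-order poles vanish, so it does not deliver the stated $\sigma_i$.

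A second, smaller error: powers of $\sqrt{1-4y}$ in the moment series do not correspond to atoms at $\pm2$. A residual $(1-4y)^{-1/2}$ yields, after inversion, an integrable density proportional to $(4-z^2)^{-1/2}$ on $(-2,2)$ (as in Enriquez--M\'enard's $f^{\{2\}}$); an atom at $\pm2$ would instead require a simple pole $1/(1-4y)$. In the correct proof no such terms survive at all: each $V_i$ is a polynomial in $yC(y)^2$ times $C(y)$, its Stieltjes transform is a polynomial in that of the semicircle, and inversion gives a density that is a polynomial times $\sqrt{4-z^2}$ on $(-2,2)$; total mass zero follows from $Q_i(0)=0$ for $i\ge1$. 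With Proposition~\ref{cor:Vi} in hand and these corrections, the rest of your outline goes through.
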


This approximation entails some curious identities concerning the generating functions $W_\xi(z)$ and prompts us to state the following conjecture which we discuss in more detail in Section~\ref{sec:cons}.

\begin{conjecture}\label{conj}
    Let $M_{\mu^c}(z)$ be the ordinary moment generating function of $\mu^c$ as defined in~(\ref{eq:ordGF}). Then there exists a unique power series $P(x)$ with non-negative integer coefficients such that all $V_i(z)$ which are given by
    \[
        V_i(z) := \left[c^{-i}\right] M_{\mu^c}\left(\sqrt{\frac{z}{P(1/c)}}\right),\quad i\geq 0
    \]
    are the product of $C\left(z\right)$ and a polynomial in $zC\left(z\right)^2$.
\end{conjecture}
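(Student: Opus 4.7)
The plan is to exploit Theorem~\ref{thm:strucT_k} together with the Catalan quadratic $zC(z)^2 = C(z)-1$, equivalently $1 - zC(z)^2 = 2 - C(z)$, to rewrite
\[
W_\xi(z) = \frac{C(z)\,R_\xi(C(z))}{(2-C(z))^{2\xi-1}}
\qquad(\xi\geq 1)
\]
for explicit polynomials $R_\xi$; hence each $W_\xi$ is a rational function of $C(z)$ whose only pole lies at $C(z)=2$, of order exactly $2\xi-1$ since $K_{\xi,2\xi-2}\neq 0$. Every $W_\xi$ carries a factor $C(z)$, and the requested form ``$C(z)\cdot(\text{polynomial in }zC(z)^2)$'' is equivalent, via the same Catalan identity, to ``$V_i(z)$ is a polynomial in $C(z)$''. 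Consequently the content of the conjecture is that, for a suitable $P(x)$, the pole of $V_i(z)$ at $C(z)=2$ vanishes for every $i\geq 0$.

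I would construct $P(x)$ inductively. Writing $P(1/c)=1+\sum_{j\geq 1}p_j c^{-j}$ and $Q(1/c)=1/P(1/c)-1$, one has $[c^{-i}]Q = -p_i + \mathrm{poly}(p_1,\dots,p_{i-1})$. Taylor expansion of $W_\xi(z/P(1/c))$ yields
\[
V_i(z) \;=\; \sum_{\xi + k \leq i}\frac{z^k W_\xi^{(k)}(z)}{k!}\,[c^{-(i-\xi)}]\,Q^k,
\]
and the unknown $p_i$ enters $V_i$ only through $(\xi,k)=(0,1)$, contributing $-p_i\,zC'(z) = -p_i\,C(z)(C(z)-1)/(2-C(z))$, a \emph{simple} pole at $C=2$. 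I would define $p_i$ as the unique value cancelling this residue. The main obstacle is that, using $C'(z) = C(z)^3/(2-C(z))$, one checks $W_\xi^{(k)}$ has a pole of order $2(\xi+k)-1$ at $C=2$, so a priori $V_i$ could have a pole of order up to $2i-1$, and a single parameter $p_i$ can cancel only a simple pole. The crux of the conjecture is therefore that, once $p_1,\dots,p_{i-1}$ have been chosen inductively, all higher-order poles at $C=2$ in $V_i$ cancel automatically. I would attack this via a careful Laurent expansion in the local uniformizer $u=2-C(z)$ near $z=1/4$, matching the coefficients of every negative power of $u$ order by order in $1/c$.

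The conceptual mechanism behind this cancellation is that $P(1/c)$ should play the role of $(\rho(c)/2)^2$, where $\rho(c)$ is the right edge of the support of $\mu^c$: the substitution $z\mapsto\sqrt{z/P(1/c)}$ moves this edge to $z=1/4$ uniformly in $c$, so that the singularity of $M_{\mu^c}\bigl(\sqrt{z/P(1/c)}\bigr)$ at $z=1/4$ should match, order by order in $1/c$, the square-root singularity of $C(z)$, suppressing all higher-order poles. Comparison with Theorem~\ref{thm:expand} identifies the natural candidate $P(1/c)=f(1/c)^2 = 1 + 1/c + 1/c^2 + 4/c^3 + 33/c^4 + 386/c^5 + \bigO(1/c^6)$, whose first few coefficients are indeed non-negative integers; uniqueness is then immediate from the step-by-step forcing of the $p_i$. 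The most delicate remaining task is to prove non-negativity and integrality of all coefficients of $P$ in general, for which I would seek a combinatorial interpretation of $P$ as the generating function of a family of rooted excursions or ``edge'' tree walks governing the spectral edge of $\mu^c$; such a model would make $p_j\in\naturals$ self-evident and, retroactively, explain why the cancellations above must occur.
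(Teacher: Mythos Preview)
The statement you are attempting to prove is labelled \emph{Conjecture} in the paper, and the paper does \emph{not} contain a proof of it. What the paper offers is (i) verification up to order $i=5$ (Proposition~\ref{cor:Vi}), and (ii) a partial cancellation result (Theorem~\ref{thm:reduc}) showing that, with the particular choice $p(x)=\sum_{k\geq 0}x^k$, the pole order of $\hat V_i$ at $C=2$ drops from $2i-1$ to $2i-3$. The paper explicitly says it has no combinatorial interpretation for the $V_i$ and leaves the full statement open.

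Your reduction is sound and matches the paper's viewpoint: via $zC(z)^2=C(z)-1$ and $1-zC(z)^2=2-C(z)$, the requested form for $V_i$ is equivalent to the vanishing of the pole at $C=2$, and you correctly locate the single free parameter $p_i$ in the $(\xi,k)=(0,1)$ term, where it produces only a \emph{simple} pole. You also correctly identify the obstruction: the terms with $\xi+k=i$ contribute poles of order $2i-1$, so $2i-2$ Laurent coefficients must vanish \emph{before} $p_i$ can be used to kill the last one. This is exactly the gap, and your proposal does not close it. Saying you ``would attack this via a careful Laurent expansion in $u=2-C(z)$'' is a description of the computation, not a proof that it succeeds; the paper carries out precisely this computation for the two leading orders in Lemma~\ref{a:lem:derivs} and Theorem~\ref{thm:reduc}, and the cancellation there rests entirely on the closed forms $K_{\xi,2\xi-2}(x)=\cat(\xi-1)x^{4\xi-2}$ and $K_{\xi,2\xi-3}(x)=(3\xi-1)\cat(\xi-1)x^{4\xi-3}$ from Lemma~\ref{lem:optim}. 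To push the cancellation down to a simple pole you would need comparably explicit control over \emph{all} $K_{\xi,s}$, $0\leq s\leq 2\xi-2$, and no such formulas are known.

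Your spectral-edge heuristic --- that $P(1/c)$ should equal $(\rho(c)/2)^2$ so that the rescaling pins the edge of the support to $2$ --- is appealing and consistent with the numerics, but it is not a proof mechanism: the measure $\mu^c$ has unbounded support, so ``edge of the support'' is not even well-defined here, and in any case matching a square-root edge singularity order by order in $1/c$ presupposes exactly the pole cancellations you need to establish. Likewise, the proposed combinatorial model for the coefficients of $P$ would, if found, settle integrality and non-negativity, but you have not exhibited one; the paper also looks for such an interpretation and does not find it. In short, your outline is a faithful restatement of what needs to be proved and agrees with the paper's partial evidence, but the central step --- automatic cancellation of the poles of order $2$ through $2i-1$ --- remains a genuine gap both in your proposal and in the literature.
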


Let us denote by $f_k(x)$ the truncation of order $k$ of $\sqrt{P(x)}$.
If the previous conjecture holds, there exist
signed measures $\sigma_1, \ldots, \sigma_k$
explicitly computable from $V_1(z), \ldots, V_k(z)$
such that for any $\ell$,
the moment of order $2 \ell$ of $\mu^c$ is
\[
    m_{2 \ell}(\mu^c)
    =
    m_{2 \ell} \bigg(
    \Lambda_{f_k(1/c)} \bigg(
        \sigma + \sum_{i=1}^k c^{-i} \sigma_i
    \bigg) \bigg)
    + \bigO(c^{-k-1}).
\]
Thus, Conjecture~\ref{conj} provides a form of asymptotic expansion
for $\mu^c$ as $c$ tends to infinity.

\begin{comment}
And by $\mu^{p_k}$ the limit measure
\[
    \mu^{p_k} =
    \lim_{n \to +\infty}
    \frac{1}{n}
    \sum_{\lambda \in Sp((c\, p_k(1/c))^{-1/2} A(G(n, c/n)))}
    \delta_{\lambda}
    = \Lambda_{p_k(1/c)^{-1/2}}(\mu^c).
\]
\[
    m_{2 \ell}(\mu^{p_k})
    =
    m_{2 \ell} \bigg( \sigma + \sum_{i=1}^k c^{-i} \sigma_i \bigg)
    + \bigO(c^{-k-1}).
\]
\end{comment}

    \section{Decomposition of tree walks}\label{sec:decomp}

Our proof of Theorem \ref{thm:strucT_k} involves reducing a tree walk with excess $\xi$ by most of its simple edges to its \emph{kernel walk} and subsequently reversing the contraction by blowing it up to an arbitrary tree walk with excess $\xi$. The following subsection is focused on this decomposition process and the subsequent subsection on the proof of Theorem \ref{thm:strucT_k} and a recursion enumerating kernel walks.

    \subsection{Kernel walks}
    \label{sec:kernel:walks}

Recall that an edge of a tree walk $W$ is \emph{simple}
if it is traversed exactly twice,
and is an \emph{excess edge} otherwise.

\begin{definition}[Kernel walks]\label{def:kernels} Given a tree walk $W$, we define the \emph{kernel} of the tree walk or simply the \emph{kernel walk} $W_K$ as the tree walk we obtain by the following procedure.
\begin{enumerate}
    \item Set $W'=W$ and let $T(W')$ be its induced tree.
    \item  While there exists a simple edge $e$ incident to a leaf in $T(W')$ which is not the root, delete both occurrences of $e$ in $W'$.
    \item While the root $u$ of the tree is a leaf and incident to a simple edge $\{u,v\}$, delete this edge in $W'$ and choose $v$ as the root of $T(W')$. 
    \item While there exists a vertex $v$ in $T(W')$ that is not the root and only incident to two simple edges $e_i=e_{j+1}=\{u,v\}$ and $e_{i+1}=e_j=\{v,w\}$, replace both consecutive pairs $e_i,e_{i+1}$ and $e_j,e_{j+1}$ with $\{u,w\}$ in $W'$.
    \item Set $W_K = W'$.
\end{enumerate}
Naturally, a tree walk $W$ with kernel $W_K=W$ is itself called a kernel walk.
Further, we define $k_{\xi,s,2\ell}$ to be the number of kernel walks of length $2\ell$ with excess $\xi$, where the induced tree has $s$ simple edges and we define the corresponding generating function
\[K_\xi(u,v,z) = \sum_{s,\ell \geq 0} k_{\xi,s,2\ell} \, u^s \frac{v^{\ell-\xi+1}}{(\ell-\xi+1)!}z^{\ell},\]
where $u$ counts the number of simple edges, $v$ the number of vertices in the induced tree and $z$ the half-length of the walk.
\end{definition}
This procedure is illustrated below.
Note that the variable $v$ in the generating function of kernel walks is superfluous since its exponent is fully determined by the length  and the excess of the walk. However, we choose to keep it to explain the factorial in the denominator. If we consider the generating function $K(u,v,z) = \sum_{\xi\geq 0} K_\xi(u,v,z)$, we can reconstruct the individual generating functions by
\[
    K_\xi(u,v,z) = \left[y^{\xi-1}\right] K\left(u,\frac{v}{y},yz\right).
\]

\begin{figure}[ht]
\raggedright \textbf{Example.} Reducing a tree walk $W$ and its induced tree $T(W)$ to its kernel. \vspace{4mm} \\
%\centering    
    \begin{subfigure}[b]{0.22\textwidth}
    \centering
    \includegraphics[width=\textwidth, page = 1]{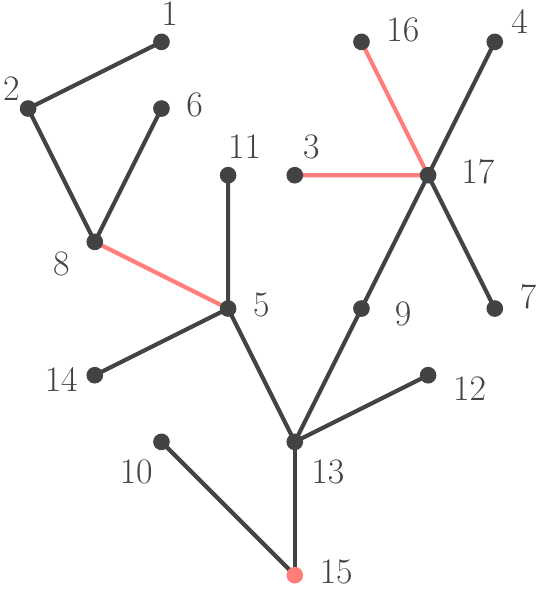}
    \caption{\footnotesize{Step 1: Set $T(W')$. \\Excess edges and the root in $T(W')$ are marked red.}}\label{fig:Step1}
    \end{subfigure}
    \hfill
    \begin{subfigure}[b]{0.22\textwidth}
    \centering
    \includegraphics[width=\textwidth, page = 2]{algorithm.pdf}
    \caption{ \footnotesize Step 2: Identify all leaves which are not incident to an excess edge}
    \end{subfigure}
    \hfill
    \begin{subfigure}[b]{0.22\textwidth}
    \centering
    \includegraphics[width=\textwidth, page = 3]{algorithm.pdf}
    \caption{\footnotesize Step 2: Remove blue vertices and update $T(W')$ by relabeling the vertices} \label{fig:Step2}
    \end{subfigure}\hfill 
    \hfill
    \begin{subfigure}[b]{0.22\textwidth}
    \centering
    \includegraphics[width=\textwidth, page = 4]{algorithm.pdf}\vspace{6.5mm}
    \caption{\footnotesize Repeat Step 2.}
    \end{subfigure}
    \\ \vspace{4mm}
    \hfill
    \begin{subfigure}[b]{0.22\textwidth}
    \centering
    \includegraphics[width=\textwidth, page = 6]{algorithm.pdf}\vspace{-2mm}
    \caption{\footnotesize Step 3: The root is a leaf}\label{fig:Step22}
    \end{subfigure}
    \hfill
    \begin{subfigure}[b]{0.22\textwidth}
    \centering
    \includegraphics[width=\textwidth, page = 7]{algorithm.pdf}\vspace{-2mm}
    \caption{\footnotesize Step 3: Choose new root and relabel vertices}\label{fig:Step23}
    \end{subfigure}
    \hfill
    \begin{subfigure}[b]{0.22\textwidth}
    \centering
    \includegraphics[width=\textwidth, page = 8]{algorithm.pdf}\vspace{-2mm}
    \caption{\footnotesize Step 4: Find adjacent simple edges}
    \end{subfigure}
    \hfill
    \begin{subfigure}[b]{0.22\textwidth}
    \centering
    \includegraphics[width=\textwidth, page = 9]{algorithm.pdf}\vspace{-5.5mm}
    \caption{\footnotesize Step 4: Update $T(W')$ by deleting vertex 4 and relabeling the vertices.}\label{fig:Step3}
    \end{subfigure}
    \label{fig:algo}
\end{figure}

\begin{lemma}\label{a:prop:char}
    Let $W$ be a kernel walk and $T(W)$ its induced tree. Then $T(W)$ is a rooted plane tree where
    \begin{enumerate}
        \item[(a)] every leaf in $T(W)$ is incident to an excess edge
        \item[(b)] every inner vertex has outdegree at least two or it is incident to an excess edge.
    \end{enumerate}
    Conversely, each rooted plane tree with edges colored by 'simple' or 'excess' which satisfies the conditions above appears as an induced tree of a kernel walk $W$.
\end{lemma}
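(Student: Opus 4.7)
The plan is to observe that conditions~(a) and~(b) encode exactly the non-applicability of Steps~2, 3, and~4 in Definition~\ref{def:kernels}: each step removes one specific local obstruction, so $W = W_K$ if and only if all such obstructions are absent. The plane-tree structure is intrinsic to any tree walk, hence not an extra condition to verify.

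\emph{Forward direction.} Assume $W$ is a kernel walk, so none of Steps~2--4 applies. If some leaf $u$ is not incident to an excess edge, then its unique incident edge is simple, and Step~2 (if $u$ is not the root) or Step~3 (if $u$ is the root) could be triggered, contradicting $W = W_K$; thus~(a) holds. For~(b), suppose some inner vertex $v$ has outdegree at most $1$ and is not incident to an excess edge. By the convention that a degree-$1$ root is a leaf, an inner root must have outdegree at least $2$, so $v$ is not the root; being inner and of outdegree $1$ then forces $v$ to have degree exactly $2$ with both incident edges simple, and Step~4 would apply, a contradiction. The plane-tree structure on $T(W)$ is inherited from the walk itself: the walk visits the neighbours of each vertex in a definite cyclic order, yielding a planar embedding which is preserved by each reduction step, since these steps only delete leaves, shift the root across a simple edge, or smooth a degree-$2$ vertex.

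\emph{Reverse direction.} Given a rooted plane tree $T$ with edges coloured simple or excess satisfying~(a) and~(b), I build an explicit kernel walk $W$ with $T(W) = T$ by a depth-first traversal that respects the planar order: starting at the root, I visit the children in order, performing one down-and-up excursion across each simple edge, and at least two such excursions across each excess edge (more for higher prescribed excess). The resulting closed walk spans $T$ with the prescribed colouring and induces the given planar embedding, so $T(W) = T$. To check that $W$ is its own kernel, I verify that none of the reduction steps fires: condition~(a) prevents Steps~2 and~3 because no leaf is incident to a simple edge, and~(b) prevents Step~4 because every non-root inner vertex either has outdegree at least~$2$ (hence degree at least~$3$) or is already incident to an excess edge.

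\emph{Main obstacle.} The whole argument is local bookkeeping; the only genuine subtleties are the careful handling of the root under the convention that a degree-$1$ root counts as a leaf (which forces Step~3 into the picture and must be used in both directions), and ensuring in the reverse construction that the order of excursions matches the given planar order of $T$ so that $T(W)$ is exactly $T$, not some reflection of it.
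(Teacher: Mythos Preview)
Your proposal is correct and follows essentially the same approach as the paper: the forward direction argues by contrapositive that failure of~(a) or~(b) would trigger one of Steps~2--4, and the reverse direction builds an explicit walk via a contour/DFS traversal with extra excursions across excess edges. If anything, you are slightly more careful than the paper in explicitly verifying that the constructed walk is its own kernel and in handling the root convention; the paper simply asserts that the result is a kernel walk.
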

\begin{proof}
    First, $T(W)$ is plane because the edges at a vertex $v$ are ordered by the order in which the walk traverses them for the first time. Further, by Step~2 and~3 in the contraction procedure, every leaf in the induced tree is incident to an excess edge. Also note that Step~4 does not change the degree of the vertices remaining in the updated induced tree such that we would not create a new leaf later in the procedure. Thus, Condition (a) has to hold.

    The second condition has to hold because otherwise either the root is incident to a single simple edge such that we would repeat Step~3 in the procedure or there exists a vertex $v$ which is not the root and incident one incoming one outgoing simple edge. However, in this latter case, we would have repeated Step~4 in the procedure one more time. 

    Now let us assume we are given a rooted plane tree $T$ with edges colored 'simple' or 'excess' that satisfies Conditions (a) and (b). We construct a kernel walk $W$ from the contour walk $W' = \{v_1,v_2,\dots,v_1\}$ of $T$ by inserting the steps $v_i,v_j$ after the first appearance of an excess edge $(v_i,v_j)$ in $W'$. The result is a kernel walk with $T$ as an induced tree.
\end{proof}

Tree walks of a given excess $\xi$ can be arbitrarily large.
However, our next result establishes
that there are only finitely many kernel walks of excess $\xi$.
This is reminiscent of the result of Wright~\cite{wright}
on the enumeration of connected graphs.

\begin{lemma}\label{lem:R_k}
Let $W_K$ be a kernel walk with excess $\xi$. Then its induced tree $T(W_K) = (V,E)$ satisfies $|V| \leq 3\xi-1$ and the number of its simple edges is at most $2\xi-2$. These bounds are tight.
Thus, $K_\xi(u,v,z)$ is a polynomial of degree $2\xi-2$ in $u$, $3\xi-1$ in $v$ and $4\xi-2$ in $z$.
\end{lemma}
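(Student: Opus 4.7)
The plan is to use \cref{a:prop:char} as the structural input and argue by double counting on vertices and on excess-edge endpoints. I will decompose $|V| = L + I_1 + B$, where $L$ counts leaves, $I_1$ counts non-leaf vertices of outdegree~$1$, and $B$ counts branching vertices (outdegree~$\geq 2$), and I will write $e_s$ and $e_x$ for the numbers of simple and excess edges. A first easy step is $e_x \leq \xi$, since each excess edge is traversed at least $4$ times and thus contributes at least $1$ to the excess. The overall target will be $|V| \leq 3 e_x - 1$, from which the stated $|V| \leq 3\xi-1$ follows immediately and, via $e_s = |V| - 1 - e_x \leq 2 e_x - 2 \leq 2\xi - 2$, so does the edge bound.

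I will then establish three inequalities. By \cref{a:prop:char}(a) the unique edge incident to each leaf is an excess edge, and by (b) each $I_1$-vertex is incident to at least one excess edge; adding up endpoints of excess edges yields $L + I_1 \leq 2 e_x$. The classical rooted-tree identity $V_0 = 1 + \sum_{k\geq 2}(k-1)\,V_k$, obtained from $\sum_k V_k = |V|$ and $\sum_k k\,V_k = |V|-1$, implies $B \leq L - 1$. The pivotal step is the sharpening $L \leq e_x$, valid as soon as $|V| \geq 3$: the map sending a leaf to its incident (excess) edge is injective, because two leaves mapped to the same edge would have to be its two endpoints and force $|V|=2$. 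Assembling the three bounds,
\[
    |V| = L + I_1 + B \leq L + (2 e_x - L) + (L - 1) = 2 e_x + L - 1 \leq 3 e_x - 1,
\]
and checking the corner case $|V|=2$ directly ($L=2$, $e_x \geq 1$, $e_s=0$, $\xi \geq 1$) completes the upper bounds.

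For tightness, I will exhibit an explicit extremal kernel walk for every $\xi \geq 1$: start from a full rooted plane binary tree with $\xi$ leaves, subdivide each leaf-edge by inserting one new degree-$2$ vertex, and mark the $\xi$ outermost edges as excess edges (each traversed exactly $4$ times, contributing $1$ each to the total excess). Conditions (a) and (b) of \cref{a:prop:char} are satisfied by construction, the total excess equals $\xi$, and the counts are $|V|=3\xi-1$ and $e_s=2\xi-2$, so both inequalities are sharp.

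The polynomial degree claims on $K_\xi(u,v,z)$ then read off the generating function definition: the exponent of $u$ is $e_s \leq 2\xi - 2$, the exponent of $v$ is $|V| = \ell - \xi + 1 \leq 3\xi - 1$, and the exponent of $z$ is the half-length $\ell = |E| + \xi \leq (3\xi - 2) + \xi = 4\xi - 2$. The step I expect to be the main obstacle is the sharpening $L \leq e_x$ on non-trivial trees: the naive endpoint count only delivers $L \leq 2 e_x$, which would give the too-weak bound $|V| \leq 4 e_x - 1$; one has to isolate the two-vertex corner case and exploit the fact that a single edge cannot host two degree-$1$ endpoints in a connected graph on three or more vertices.
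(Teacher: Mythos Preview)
Your argument is correct and essentially identical to the paper's: your $(L, I_1, B)$ match the paper's $(\ell_1, \ell_2, m-\ell_1-\ell_2)$, your bound $B \leq L - 1$ is the outdegree-sum inequality $m - 1 \geq 2(m-\ell_1-\ell_2)+\ell_2$ repackaged, and you are in fact more careful than the paper about the $|V|=2$ corner case behind the step $L \leq e_x$. One small slip: for $\xi = 1$ your tightness construction fails because the one-leaf binary tree is a single vertex with no leaf-edge to subdivide; the paper instead \emph{attaches} a pendant excess edge to each leaf, which produces an isomorphic tree to yours for $\xi \geq 2$ and the correct $2$-vertex kernel for $\xi = 1$.
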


\begin{proof}
Consider a kernel walk $W_K$ of excess $\xi$, with
$m$ vertices,
$\ell_1$ leaves,
$\ell_2$ vertices of degree $2$ that are not the root,
and outdegree sequence $(d_1, \ldots, d_m)$.
Each leaf is incident to an excess edge, so $\ell_1 \leq \xi$.
Each vertex of degree $2$ is incident to an excess edge, so
$\ell_2 \leq 2 \xi - \ell_1$.
The sum of the outdegrees is $m-1$, so
\[
    m - 1 = \sum_j d_j \geq 2\,(m-\ell_1-\ell_2) + \ell_2
\]
which implies $m \leq 3 \xi - 1$.
The number of simple edges is bounded by $m - 1 - \xi \leq 2 \xi - 2$.
The kernel walk has at most $2 \xi - 2$ half-steps along the simple edges,
and $2 \xi$ half-steps along the excess edges,
so the half-length is bounded by $4 \xi - 2$.

In order to prove that these bounds are tight, consider a binary tree on $2\xi-1$ vertices. It is well known that its number of leaves is $\xi$ to which we attach $\xi$ excess edges and define the tree walk $W$ by a modified depth-first-search where we consecutively traverse each edge incident to a leaf four times instead of only twice. It is easy to check that $W$ is indeed a kernel walk, since all leaves in $T(W)$ are incident to excess edges and every vertex which is not the root is either an internal vertex in the original binary tree and has therefore degree 3 or is incident to one of the attached excess edges. The number of simple edges in this tree is of course the number of edges in the binary tree, which is $2\xi-2$ and the number of vertices is $2\xi-1+\xi = 3\xi-1$.
\end{proof}

Although expressing $K_\xi(u,v,z)$ directly is challenging,
some subfamilies of kernel walks have a simple expression.
A kernel walk of excess $\xi$ is said to be \emph{optimal}
if it contains $2 \xi - 2$ simple edges, and \emph{near-optimal}
if it contains $2 \xi - 3$ simple edges.

\begin{lemma}\label{lem:optim}
Let $\cat(n)$ denote the $n$-th Catalan number.
There are $(3 \xi - 1)! \cat(\xi - 1)$
optimal kernels of excess $\xi$ for $\xi \geq 1$,
and $(3 \xi - 1)! \cat(\xi - 1)$ near-optimal kernels of excess $\xi$
for $\xi \geq 2$.
Let $K_{\xi,s}\left(z\right)$ denote the generating function of kernel walks
with excess $\xi$ and $s$ simple edges in the induced tree,
where $z$ marks the half-length of the walk.
This implies
    \begin{enumerate} 
    \item[(a)] $K_{\xi,2\xi-2}(z) = \cat(\xi-1)z^{4\xi-2}$, $\qquad \qquad \; \,$ for $\xi\geq 1$
    \item[(b)] $K_{\xi,2\xi-3}(z) = (3\xi-1)\cat(\xi-1)z^{4\xi-3}$, $\quad$ for $\xi\geq 2$.
    \end{enumerate}
\end{lemma}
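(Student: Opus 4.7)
The plan is to chase tightness in the inequality chain used to prove Lemma~\ref{lem:R_k} in order to identify the extremal kernel shapes, and then to enumerate them combinatorially using Lemma~\ref{a:prop:char}.

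For part (a), the bound $s \leq 2\xi - 2$ is saturated only when every inequality in that proof is tight. This forces $e_x = \xi$ excess edges each of individual excess $1$, exactly $\xi$ leaves and $\xi$ non-root degree-two vertices, and $\xi - 1$ other vertices of outdegree exactly $2$. A double-count of the $2\xi$ excess-edge endpoints (each leaf contributes $1$, each non-root degree-two vertex contributes at least $1$) then forces every excess edge to connect one leaf to one non-root degree-two vertex, i.e.\ to form a pendant pair. Contracting the $\xi$ pendant pairs yields a rooted plane binary tree with $\xi-1$ internal nodes and $\xi$ leaves, and any such binary tree uniquely lifts back to an optimal kernel by reattaching pendants; hence there are $\cat(\xi-1)$ unlabeled shapes. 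Since the parent edge of every non-pendant vertex is simple, each such vertex is visited exactly once in the walk, so the sequence of child excursions is forced by the plane order and each labeled shape determines the walk uniquely. Multiplying by the $(3\xi-1)!$ labellings gives $(3\xi-1)!\,\cat(\xi-1)$ optimal kernel walks of common half-length $4\xi-2$, and the identity for $K_{\xi,2\xi-2}(z)$ follows from the definition of $K_{\xi,s}$.

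For part (b), the relation $m = s + e_x + 1$ with $s = 2\xi-3$ only allows $e_x \in \{\xi-1, \xi\}$. The case $e_x = \xi-1$ is ruled out by the same tight-inequality analysis: one finds $\ell_1 \geq \xi$, while each leaf requires a distinct excess edge (as no leaf-leaf excess edge exists in a tree with at least $3$ vertices), forcing $\ell_1 \leq e_x = \xi-1$, a contradiction. Hence $e_x = \xi$, $m = 3\xi-2$, and the slack in the inequality chain is exactly one. This slack can be absorbed in a few essentially different ways: a non-root degree-two vertex with two excess edges, an ``other'' vertex incident to an excess edge, or (for $\xi \geq 3$) a vertex of outdegree three replacing a pair of binary internal vertices. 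For each sub-family one enumerates the unlabeled shapes, and, crucially, whenever a non-leaf vertex has an excess parent edge it is visited twice, so one must multiply by the number of interleavings of its child excursions across the two visits. Summing across sub-families gives $(3\xi-1)\,\cat(\xi-1)$ unlabeled near-optimal kernel walks, hence $(3\xi-1)!\,\cat(\xi-1)$ labeled ones and the identity for $K_{\xi,2\xi-3}(z)$.

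The principal obstacle is the enumeration in part (b): several sub-families contribute and, unlike in (a), the number of walks per labeled shape is not always one. If the direct enumeration becomes unwieldy, a cleaner alternative is to set up the recursive generating function $K_\xi(u,v,z)$ from the children-decomposition implicit in Lemma~\ref{a:prop:char} and extract $[u^{2\xi-3}]$ from $[u^{2\xi-2}]$ by a differential-like identity, in which the factor $(3\xi-1)$ emerges naturally as a marked-vertex derivative of the optimal generating function.
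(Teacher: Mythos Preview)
Your tight-inequality approach is correct and different from the paper's. You chase equality through the chain in Lemma~\ref{lem:R_k} to pin down the shape; the paper instead argues by contradiction, exhibiting for every non-optimal candidate an explicit local modification that strictly increases the number of simple edges. Both reach the same characterisation (binary tree on $2\xi-1$ vertices with a pendant excess edge at each leaf), and both observe that the walk is then uniquely determined by the plane tree. Your route is arguably more systematic; the paper's route has the advantage that its modifications are reused verbatim in part (b).

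\textbf{Part (b).} Here your case analysis contains a genuine error. Your sub-case (i), ``a non-root degree-two vertex with two excess edges'', cannot occur in a near-optimal kernel: if such a $v^*$ existed, the incidence count would force $\ell_1+\ell_2\le 2\xi-1$, hence $\ell_1=\xi$, and then both excess edges at $v^*$ would need leaf endpoints, forcing $v^*$'s parent to be a (root) leaf; but the refined outdegree bound with a root-leaf gives $m\le 2\ell_1+\ell_2-2=3\xi-3<3\xi-2$. The correct third family (besides your (ii) and (iii)) is rather $\ell_1=\xi-1$: one excess edge has \emph{no} leaf endpoint and therefore joins two adjacent non-root degree-two vertices, each with exactly one excess incidence. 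There is also a boundary configuration with the root a leaf that your inequality chain does not isolate; the paper absorbs it into this same family.

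The paper avoids this bookkeeping entirely by reusing part (a): a near-optimal kernel is precisely one on which exactly one of the modifications (1), (2), (3a) can be applied once to reach an optimal kernel (modifications (3b), (3c) add at least two simple edges and are excluded). This yields three explicit families with counts $\tfrac{\xi-2}{2}\cat(\xi-1)$, $\tfrac{3\xi}{2}\cat(\xi-1)$, and $\xi\,\cat(\xi-1)$, summing to $(3\xi-1)\cat(\xi-1)$; the walk-multiplicity issues you correctly flag are handled case by case there. Your slack-based approach can be repaired along the lines above, but as written the sub-families are mis-identified and the enumeration is only asserted, so it does not yet constitute a proof.
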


\begin{proof}[Proof of Lemma~\ref{lem:optim}(a)]
Tree walks have labeled vertices, but the order of visit of the nodes by the walk induces a canonical embedding in the plane. We will soon prove that optimal (resp.~near-optimal) tree walks of excess $\xi$ have $4 \xi - 2$ (resp.~$4 \xi - 3$) nodes, so we can think without loss of generality about plane unlabeled trees, and decorate afterwards with one of the $(4 \xi - 2)!$ (resp.~$(4 \xi - 3)!$) possible labelings.

We already showed in the proof of Lemma~\ref{lem:R_k} that walks constructed from binary trees where excess edges are attached at their leaves are optimal. Since there are exactly $\cat(\xi-1)$ binary trees with $\xi$ leaves and the kernel walk is uniquely determined by the induced tree in this case, it is left to show that these walks are indeed the only trees induced by optimal kernel walks.

So let us assume for contradiction that there is an optimal walk $W$ which does not induce a binary tree with simple edges and $\xi$ leaves at which the excess edges are attached. Then (at least) one of three cases must occur:
    \begin{enumerate}
        \item there is a vertex $v$ with outdegree greater than $2$
        \item there is a vertex $v$ with outdegree $2$ which is incident to at least one outgoing excess edge
        \item there are less than $\xi$ leaves:
        \begin{enumerate}
            \item there is a vertex $v$ incident to an ingoing excess edge and one outgoing simple edge.
            \item there is a vertex $v$ incident to an ingoing excess edge and one outgoing excess edge or two outgoing simple edges
            \item there are less than $\xi$ excess edges in total.
        \end{enumerate}
    \end{enumerate}
    However, in each of these cases, we can modify the walk such that we increase the number of simple edges without increasing the excess edges in the induced tree by the following modifications which are also depicted in Figure~\ref{fig:inc_sim}. By Lemma \ref{a:prop:char} it is enough to explain these modifications on the tree as long as the excess edges do not decrease and the tree satisfies both conditions in the lemma. The modifications for the respective cases are as follows.
    \begin{enumerate} 
        \item Let $(v,u_1)$ and $(v,u_2)$ be two of at least three outgoing edges at $v$. Then we create a new vertex $w$, shift the edges $(v,u_1)$ and $(v,u_2)$ to $(w,u_1)$ and $(w,u_2)$ respectively and add one simple edge $(v,w)$. 
        \item Let $(v,u)$ an outgoing excess egde at $v$. We delete $(v,u)$, create a new vertex $w$ and insert a simple edge $(v,w)$ and an excess edge $(w,u)$.
        \item[(3a)] Let $(u_1,v)$ be the ingoing excess edge and $(v,u_2)$ the outgoing simple edge at $v$.  
        Now we delete the edge $(v,u_2)$ and add a simple edge $(u_1,u_2)$. Then we create a new vertex $w$, add an excess edge $(v,w),$ and color the edge $(u_1,v)$ as simple.
        \item[(3b)] Let $(u,v)$ be the excess edge incident to $v$ and $W_1$ be the subtree rooted at $v$. Note that since $v$ is incident to two outgoing simple edges or one outgoing excess edge, this subtree is the induced tree of a kernel walk by Lemma \ref{a:prop:char} and we can move it without the restriction that its root has to be incident to an excess edge. So we create two new vertices $w_1$ and $w_2$, color $(u,v)$ as a simple edge and add two new edges $(u,w_1)$, $(w_1,w_2)$ where $(u,w_1)$ is a simple edge and $(w_1,w_2)$ is an excess edge.
        \item[(3c)] In the case that there are less than $\xi$ excess edges in total, there exists an excess edge $(v,u)$ with excess $\xi(v,u) >1$. Now we create three new vertices $w_1, w_2, w_3$, color the edge $(v,u)$ simple, add a simple edge $(v,w_1)$ and distribute the excess as follows. We create an excess edge $(w_1,w_2)$ with excess $1$ and an excess edge $(u,w_3)$ with excess $\xi(v,u) -1$.
    \end{enumerate}
    In all cases, we added at least one simple edge and ended up with a tree where every leaf is incident to an excess edge and every inner vertex has outdegree at least two or is incident to an excess edge. That is, it is induced by a kernel walk by Lemma \ref{a:prop:char}.
    This contradicts the assumption that $W$ was an optimal walk and hence 
    \[
        K_{\xi,2\xi-2}(z) = \cat(\xi-1)z^{4\xi-2}.
    \]
\end{proof}
\begin{proof}[Proof of Lemma~\ref{lem:optim}(b)]
    Next, we turn our attention to near-optimal kernel walks. These are walks where we can increase the number of simple edges by exactly one if we apply the modifications in the previous proof. Thus, only the cases (1),(2) and (3a) are relevant and we count kernel walks where the induced tree is
    \begin{enumerate}
        \item \emph{a binary tree with simple edges and $\xi$ leaves where exactly one vertex has outdegree $3$ and the excess edges are attached at the leaves:}\\
        If there is a vertex with degree 3, we have two possibilities to extend it by an edge to a binary tree (see Figure \ref{fig:inc_sim}, Case 1) and the tree walk is still uniquely determined. Thus, equivalently we count half the number of optimal tree walks with one distinguished edge in the induced tree and obtain 
        \[
            \frac{\xi-2}{2}\cat(\xi-1)
        \]
        \item \emph{a binary tree with simple edges and $\xi$ leaves where the edge incident to one of the leaves is turned into an excess edge and the remaining excess edges are attached at the remaining $\xi-1$ leaves:}\\
        In this case we just count binary trees with $\xi$ leaves and choose one of the edges to be an excess edge. However, if this chosen edge is the left edge at a vertex, then we gain an additional possibility how the walk unfolds (\eg vuvwvuv or vuvuvwv). Note that we count the same tree reflected at the root as well where all exactly those edges which do not contribute in the original tree contribute an additional possibility. Thus, by symmetry we count additionally $\xi/2$ times binary trees with $\xi$ leaves and obtain in total 
        \[
            \left(\xi+\frac{\xi}{2}\right)\cat(\xi-1)
        \]
        \item \emph{a binary tree with simple edges and $\xi-1$ leaves at which $\xi-1$ excess edges are attached and where we insert an ingoing sequence of an excess edges followed by a simple edge at an inner vertex:}\\
        There are $2\xi-3$ vertices in a binary tree with $\xi-1$ vertices where we could insert this incoming sequence before the original incoming simple edge. Further this sequence gives two distinct ways how the walk unfolds ($vwvwW_1wv$ or $vwW_1wvwv$, where $W_1$ is a subwalk starting at $w$). Thus, using a simple identity of the Catalan numbers, the number of walks we count is 
        \[
            2(2\xi-3)\cat(\xi-2) = \xi\cat(\xi-1).
        \]
    \end{enumerate}
    Consequently, the number of near-optimal walks is in total 
    \[
        \left(\frac{\xi-2}{2}+\frac{3\xi}{2}+\xi\right)\cat(\xi-1)=\left(3\xi-1\right)\cat(\xi-1)
    \]
    and they are all of length $8\xi-6$.
\end{proof}

\begin{figure}
    \begin{subfigure}{0.3\textwidth}
        \centering
        \includegraphics[width=\textwidth, page=2]{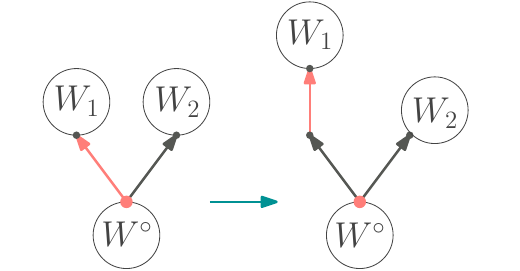}
        \caption{Case 1}
    \end{subfigure}
    \hspace{8mm}
    \begin{subfigure}{0.3\textwidth}
        \centering
        \includegraphics[width=\textwidth, page=1]{walk.pdf}
        \caption{Case 2}
    \end{subfigure}
    %\hspace{4mm}
    \\
    \begin{subfigure}{0.3\textwidth}
        \centering
        \includegraphics[width=\textwidth, page=3]{walk.pdf}
        \caption{Case 3a}
    \end{subfigure}
    \hspace{4mm}
    \begin{subfigure}{0.3\textwidth}
        \centering
        \includegraphics[width=\textwidth, page=5]{walk.pdf}
        \caption{Case 3b}
    \end{subfigure}
    \hspace{4mm}
    \begin{subfigure}{0.3\textwidth}
        \centering
        \includegraphics[width=\textwidth, page=4]{walk.pdf}
        \caption{Case 3c}
    \end{subfigure}
    \caption{Increasing the number of simple edges}
    \label{fig:inc_sim}
\end{figure}

Given a kernel walk $W_K$ with excess $\xi$, we reconstruct a tree walk $W$ by substituting every simple edge by a sequence of back and forth steps, adding a sequence of steps at the root of $T(W_K)$, moving the root to the leaf of this attached path and adding a tree walk without excess at the beginning and after each step in this extension of $W_K$.

\begin{lemma}\label{lem:T_k}
    Let $W_\xi(z)$ be the generating function of the number of tree walks with excess $\xi \geq 1$ and $K_\xi(u,v,z)$ the generating function of kernel walks with excess $\xi$ and where $u$ marks the number of simple edges, $v$ the number of vertices and $z$ the half-length of the walk. Then
    \[
        W_\xi(z) = \frac{C(z)}{1-zC(z)^2} K_\xi\left(\frac{1}{1-zC(z)^2},1,zC(z)^2\right).
    \]
\end{lemma}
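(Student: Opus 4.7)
The plan is to invert the contraction algorithm of Definition~\ref{def:kernels}. Given a kernel walk $W_K$ of excess $\xi \geq 1$ and half-length $\ell_K$, I will build every tree walk whose kernel is $W_K$ by three reverse operations: reversing step~3, I graft at the root of $W_K$ a simple path of length $p \geq 0$ and set the original root to the far end of this path; reversing step~4, I replace each simple edge $e$ of $W_K$ by a path of length $k_e \geq 1$; and reversing step~2, I insert at each corner of the resulting extended walk an excess-$0$ tree walk (an excursion) rooted at the current vertex. Because the forward contraction has a well-defined output independent of the order in which simple leaves and degree-$2$ chains are processed, this decomposition is unique, and the parameters $p$, $(k_e)$ and the individual excursions can be recovered canonically from the original tree walk.

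To translate the construction into generating functions, I will count corners carefully. Viewing a walk of half-length $\ell'$ as an open sequence $(v_1, v_2, \ldots, v_{2\ell'})$ with implicit closing transition $(v_{2\ell'}, v_1)$, it has $2\ell' + 1$ corners: the corner just before the first transition and the one just after the last transition both lie at $v_1$, but they must be treated as distinct insertion sites, because inserting an excursion at one or the other yields different plane structures (the attached subtree is either the first or the last subtree visited at $v_1$). A kernel walk of half-length $\ell_K$ with excursions at each corner therefore contributes
\[
    z^{\ell_K}\, C(z)^{2\ell_K + 1}
    = C(z) \cdot \bigl(z\, C(z)^2\bigr)^{\ell_K}.
\]
Inflating a simple edge to length $k_e$ adds $k_e - 1$ half-steps and $2(k_e - 1)$ new corners, contributing $(z\, C(z)^2)^{k_e - 1}$ per edge and summing to $\bigl(1 - z\, C(z)^2\bigr)^{-1}$ over $k_e \geq 1$. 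The root extension of length $p$ similarly contributes $(z\, C(z)^2)^p$, summing to $\bigl(1 - z\, C(z)^2\bigr)^{-1}$.

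Putting the pieces together, the substitution $z \mapsto z\, C(z)^2$ inside $K_\xi$ will absorb the factor $(z\, C(z)^2)^{\ell_K}$ coming from the kernel half-length and the kernel-corner excursions; the substitution $u \mapsto \bigl(1 - z\, C(z)^2\bigr)^{-1}$ will absorb the simple-edge inflations; and $v \mapsto 1$ will discard the redundant vertex marker, since $m_K = \ell_K - \xi + 1$ is already determined by the half-length and excess. The residual $C(z)$ from the kernel, together with the root-extension factor $\bigl(1 - z\, C(z)^2\bigr)^{-1}$, will combine into the announced prefactor $\dfrac{C(z)}{1 - z\, C(z)^2}$. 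The main obstacle will be the careful justification of the count of $2\ell_K + 1$ corners (rather than the $2\ell_K$ one might naively obtain from a cyclic viewpoint): this extra corner is precisely what produces the supplementary factor $C(z)$. A sanity check at $\xi = 1$, where $K_1(u, v, z) = v^2\, z^2$ and $K_1\!\left(\cdot, 1, z\, C(z)^2\right) = z^2\, C(z)^4$, already confirms the predicted formula $W_1(z) = \dfrac{z^2\, C(z)^5}{1 - z\, C(z)^2}$.
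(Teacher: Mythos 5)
Your proposal is correct and follows essentially the same route as the paper: invert the contraction of Definition~\ref{def:kernels} by inflating simple edges into paths (giving the substitution $u \mapsto (1-zC(z)^2)^{-1}$), grafting a path at the root (the extra $(1-zC(z)^2)^{-1}$ prefactor), and inserting an excess-$0$ excursion at each of the $2\ell'+1$ corners of the extended walk (giving $z \mapsto zC(z)^2$ and the residual factor $C(z)$). Your explicit count of $2\ell'+1$ corners matches the paper's ``at the beginning and after each step,'' and the $\xi=1$ sanity check is right.
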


\begin{proof}
    Given a kernel walk $W_K$ with excess $\xi$, we want to reconstruct all possible tree walks $W$ which reduce to $W_K$. Therefore we reverse the procedure in Definition \ref{def:kernels} in every possible way. If we undo an iteration of Step~4, we add a vertex in $T(W')$ on a simple edge and extend $W'$ accordingly. Thus, going back all iterations of Step~4, we substitute every simple edge by a (possibly empty) sequence of vertices which add a back and forth step to the length. In terms of the generating function we therefore obtain
    \[
        K_\xi\left(\frac{1}{1-z},1,z\right),
    \]
    where we set $v=1$ since we do not need the number of vertices as a parameter.
    Reversing Step 3, breaks down to adding a sequence of back and forth steps at the root of $T(W')$ and moving the root to the leaf of this attached path. Naturally, this means we add a factor $\frac{1}{1-z}$ to the generating function and obtain
    \[
        \frac{1}{1-z} K_\xi\left(\frac{1}{1-z},1,z\right).
    \]
    Finally, we undo all iterations of Step~2, by adding a tree walk without excess at the beginning and after each step in $W'$. The corresponding generating function for tree walks without excess is well known to be the generating function of contour walks of trees~\cite{Bushygraphs, inna_z, physics}. That is, we multiply two factors of $W_0(z) = C(z)$ with each $z$ since it counts a back and forth step and we add an additional factor of $C(z)$ to the equation, such that in total we obtain
    \[
        \frac{C(z)}{1-zC(z)^2} K_\xi\left(\frac{1}{1-zC(z)^2},1,zC(z)^2\right).
    \]
\end{proof}

\begin{proof}[Proof (Theorem \ref{thm:strucT_k})]
    By the previous lemma, we have
    \[
        W_\xi(z) = \frac{C(z)}{1-zC(z)^2} K_\xi\left(\frac{1}{1-zC(z)^2},1,zC(z)^2\right)
    \]
    and by Lemma \ref{lem:R_k} we know that $K_
    \xi(u,v,z)$ is a polynomial with degree at most $4\xi-2$ in $z$ and $2\xi-2$ in $u$. So if we expand $K_\xi(u,1,z)$ in $u$, we obtain the desired form.
    Further, consider a kernel walk with $s$ simple edges,
    excess $\xi$, length $2\ell$ and $m$ edges.
    Then the half-length $\ell = \xi + m$, and the number of edges
    is bounded by $\xi + s$, so
    $\ell \leq 2\xi + s$. This implies that
    the degree of $[u^s]K_\xi(u,1,z)$ in $z$ is at most $2\xi+s$. Again starting from a binary tree with $s$ edges if $s$ is even and all excess edges attached to the leaves, the maximum degree $2\xi+s$ of $[u^s]W_\xi(u,1,z)$ is attained. If $s$ is odd, we take a binary tree on $s-1$ edges attach another simple edge at the neighbour of one of the leaves and all excess edges attached at the newly formed leaves and we attain again the maximum degree.\\
    Finally, we computed $K_{\xi,2\xi-2}(x)$ and $K_{\xi,2\xi-3}(x)$ in Lemma~\ref{lem:optim}. 
\end{proof}

    \subsection{A recursion for the generating function of tree walks of excess $\xi$}
    \label{sec:recursion:kernels}

Theorem~\ref{thm:strucT_k} raises the question of the computation of the generating function $K_{\xi,s}(z)$ of kernel walks of excess $\xi$ with $s$ simple edges, where $z$ marks the half-length.
Technically, we can deduce the following functional equation for the generating function of kernel walks with one additional auxiliary parameter.

\begin{proposition}\label{prop:ker}
Let $K(x,u,v,z)$ be the generating function of kernel walks. That is,
\[
    K(x,u,v,z) = \sum_{j,s,\ell \geq 0} k_{j,s,m,2\ell} \, \frac{x^j}{j!}u^s \frac{v^{m}}{m!}z^{\ell}
\]
where $u$ counts the number of simple edges in the induced tree, $v$ the number of vertices, $z$ the half-length of the walk and $x$ how often the walk leaves the root. Then
\begin{align*}
    K(x,u,v,z) &= v\exp \Bigg( \laplace_{t=1} \Big(xuz(K(t,u,v,z)-v)+D\left(t,xz\right)K(t,u,v,z)\Big)\Bigg)\\
    &\quad -v\laplace_{x=1}\Big(xuz(K(x,u,v,z)-v)\Big)
\end{align*}
where $D(t,z) = \sum_{j \geq 2} \frac{z^jt^{j-1}}{j! (j-1)!}$ and $\laplace_{t=1} \left( A(t) \right) = \sum_{i\geq 0} i!\, [t^i] A(t)$.
\end{proposition}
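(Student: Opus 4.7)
The plan is to prove the identity by decomposing a kernel walk at its root and translating this structural decomposition into a generating function equation. Fix a kernel walk $W$ with root $r$; write $r$ as a single labeled vertex contributing the factor $v$ through the EGF monomial $v^m/m!$, and view the rest of $W$ as a labeled collection of sub-walks, one per child of $r$. Because all vertices are labeled, the collection of children is assembled through the exponential, and the variable $x$ marking excursions from $r$ is distributed over the children according to the multiplicities of their incident edges: an edge of multiplicity $k$ accounts for $k$ excursions from $r$ (the $2k$ traversals of that edge).

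Splitting the per-child contribution by edge type gives the two summands appearing inside $\laplace_{t=1}$. If $\{r,c\}$ is simple ($k=1$), the root has exactly one excursion through it and $c$ is visited exactly once; because a non-root leaf must be incident to an excess edge by Lemma~\ref{a:prop:char}, the subwalk rooted at $c$ has at least one excursion from $c$, producing $xuz \cdot \laplace_{t=1}(K(t,u,v,z)-v)$. If $\{r,c\}$ is excess of multiplicity $k \geq 2$, there are $k$ visits from $r$ to $c$ and the $j'$ excursions of $c$ must be distributed as an ordered sequence among these $k$ visits in $\binom{j'+k-1}{k-1}$ weak compositions; the identity $\binom{j'+k-1}{k-1}\,(k-1)! = (j'+k-1)!/j'!$ rewrites this factor as $\frac{1}{(k-1)!}\,\laplace_{t=1}(t^{k-1}K(t,u,v,z))$, and summing over $k \geq 2$ with the edge-plus-excursion weight $(xz)^k/k!$ reassembles exactly into $\laplace_{t=1}(D(t,xz)\,K(t,u,v,z))$.

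Combining these per-child contributions inside an exponential (for a labeled set of children) yields the main term $v\exp(\laplace_{t=1}(xuz(K(t)-v) + D(t,xz)K(t)))$. This expression, however, still includes a configuration that the kernel condition forbids: the root with a single simple-edge child, which Step~3 of the contraction procedure would eliminate. Its spurious contribution is precisely the linear-in-$A$ simple-edge piece coming from $\exp(A) = 1 + A + A^2/2 + \cdots$, and removing it produces the correction term $-v\,\laplace_{x=1}(xuz(K(x,u,v,z)-v))$ stated in the proposition. Combining these two parts gives the claimed functional equation.

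The main obstacle will be the Laplace-transform bookkeeping for the excess-edge case: one has to verify carefully that the combinatorial factor $\binom{j'+k-1}{k-1}$ counting the interleavings of $c$'s excursions among its $k$ visits is reproduced by $\frac{1}{(k-1)!}\,\laplace_{t=1}(t^{k-1} K(t,\cdot))$, that summing over $k \geq 2$ reassembles into $\laplace_{t=1}(D(t,xz)K)$, and that the correction term cancels exactly the single spurious configuration from the exponential expansion without disturbing the $j\geq 2$ contributions. Care is also needed to confirm that at every step the subtree at a child is genuinely captured by $K$ in the appropriate form ($K-v$ for simple incoming, $K$ for excess incoming), matching the vertex-degree dichotomy imposed by Lemma~\ref{a:prop:char}.
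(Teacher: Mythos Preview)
Your approach follows the route the paper points to (decompose at the root, assemble children through an exponential, handle the excess-edge interleaving with the Laplace transform, in direct analogy with the proof of Lemma~\ref{lem:super}), and your weak-composition count $\binom{j'+k-1}{k-1}$ is just a rephrasing of the subset bijection used there.

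There is, however, a real gap in the excess-edge branch---precisely at the point you flag as needing care. You claim that the subtree rooted at a child $c$ reached by an excess edge $\{r,c\}$ is captured by $K$. But in a kernel walk the vertex $c$ is incident to the incoming excess edge, so condition~(b) of Lemma~\ref{a:prop:char} holds at $c$ regardless of its outgoing edges; in particular $c$ may have a \emph{single simple} outgoing edge. After cutting $\{r,c\}$ and rooting the subtree at $c$, such a $c$ becomes a root that is a leaf incident to a simple edge, which is not a kernel walk. Concretely, the path $r\text{--}c\text{--}d\text{--}e$ with $(r,c)$ and $(d,e)$ of multiplicity~$2$ and $(c,d)$ simple is a valid kernel walk (excess~$2$, one simple edge, half-length~$5$), yet its subtree at $c$ is not a kernel walk; these walks contribute $x^2$ to the coefficient of $u\,v^4 z^5$ in $K(x,u,v,z)$, while your decomposition produces no $x^2$ there. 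Your correction term only subtracts the spurious ``root with a single simple child'' configurations coming out of the exponential; it does not add back the walks missed by using $K$ rather than the larger family of kernel-like walks with no restriction on the root in the excess branch. This mismatch does not arise in Lemma~\ref{lem:super} because superreduced walks have no simple edges at all, so the root condition is vacuous; it is exactly why the paper's actual computations proceed via the decomposition of Lemma~\ref{th:link:kernel:superreduced}, which first isolates the whole superreduced component at the root and only then attaches kernel subwalks through simple edges, cleanly avoiding the issue.
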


However, the operator $\laplace_{t=1}$ and the modified Bessel function $D(t,z)$ make the analysis of the equation quite complicated. We omit the proof as it is analogous to the proof of Lemma~\ref{lem:super} below and instead boil down the recursion even further to superreduced walks. In this case the complexity of the problem is fully isolated in the equation for the generating function of superreduced walks and its computational evaluation of the coefficients for small excess is significantly faster.

\begin{definition}
A superreduced walk is a tree walk where no edge is simple.
Denoting by $s_{m,2\ell}$ the number of such walks
of length $2\ell$ on $m$ vertices,
their generating function is
\[
    S(v,z) =
    \sum_{\substack{\ell \geq 0\\ m\geq 0}}
    s_{m,2\ell}
    \frac{v^m}{m!}
    z^{\ell}.
\]
\end{definition}

The parameters of superreduced walks are bounded as follows.

\begin{proposition}
    A superreduced walk with excess $\xi$ is of length at most $4\xi$ and it spans a tree with at most $\xi+1$ vertices. Their generating function $S_\xi(v,z)$ is therefore a polynomial of degree $\xi+1$ in $v$ and $2\xi$ in $z$.
\end{proposition}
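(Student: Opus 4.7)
The proof plan is short because the constraints on a superreduced walk are quite rigid: every edge must be traversed at least four times, so each edge carries at least one unit of excess.

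First I would recall the identity $\xi(W) = \ell - |E(T(W))|$ from \cref{def:excess}, where $2\ell$ is the length of $W$ and $|E(T(W))| = m-1$ is the number of edges of the induced tree on $m$ vertices. Writing $\xi(W) = \sum_{e \in E(T(W))} \xi(e)$ and using that in a superreduced walk every edge $e$ satisfies $\xi(e) \geq 1$, I would immediately get
\[
    \xi \;\geq\; |E(T(W))| \;=\; m - 1,
\]
which yields the bound $m \leq \xi + 1$ on the number of vertices.

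Next, I would plug this into the length identity $\ell = \xi + (m-1)$. Since $m - 1 \leq \xi$, this gives $\ell \leq 2\xi$ and hence the length $2\ell$ is at most $4\xi$, as claimed. Conversely, both bounds are attained: a path of $\xi$ edges each traversed four times (a walk on $\xi+1$ vertices of length $4\xi$) is superreduced and realises equality in each inequality, which I would note in passing to confirm the bounds are sharp.

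Finally, translating these bounds into the generating function $S_\xi(v,z) = \sum_{m,\ell} s_{m,2\ell}\,\frac{v^m}{m!}\,z^\ell$ filtered by excess $\xi$, the only nonzero coefficients occur for $m \leq \xi+1$ and $\ell \leq 2\xi$, so $S_\xi(v,z)$ is a polynomial of degree at most $\xi+1$ in $v$ and $2\xi$ in $z$. There is no real obstacle here; the only thing to be careful about is to present the degree bound on $\ell$ as a bound on the exponent of $z$ (not $z^2$), matching the definition of $S(v,z)$.
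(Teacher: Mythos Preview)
Your proposal is correct and follows essentially the same argument as the paper: every edge of a superreduced walk is an excess edge, so $|E|\leq\xi$ gives the vertex bound $m\leq\xi+1$, and then $\ell=\xi+|E|\leq 2\xi$ gives the length bound. Your inclusion of an explicit extremal example (a path with each edge traversed four times) to confirm sharpness is a small addition beyond the paper's proof, and it justifies saying the degrees are exactly $\xi+1$ and $2\xi$ rather than ``at most''.
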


\begin{proof}
    Each of the edges in the spanned tree of a superreduced walk has to be an excess edge and the tree thus has at most $\xi+1$ vertices. Further, each edge is crossed twice plus the excess steps. That is the length of the walk is $2|E|+2\xi \leq 4\xi$ steps. 
\end{proof}

The following lemma reduces the enumeration of kernels
to the enumeration of superreduced walks.
The main idea is to consider the induced tree of a kernel walk and isolate the component which contains the root after deleting all simple edges (see Figure \ref{fig:decomp_kernel}). The restriction of the kernel walk to this component is a superreduced walk and the restriction of the kernel walk to all of the other components are kernel walks again.  
\begin{comment}
    \begin{figure}
    \centering
    \includegraphics[width=0.3\textwidth, page = 10]{algorithm.pdf} 
    \includegraphics[width=0.3\textwidth, page = 11]{algorithm.pdf}
    \caption{Decomposing kernel walks}
    \label{fig:decomp_kernel}
\end{figure}
\end{comment}

\begin{lemma} \label{th:link:kernel:superreduced}
    Let $S(v,z)$ be the generating function of superreduced walks, that is, kernel walks without simple edges, where $v$ counts the number of vertices in the induced tree and $z$ the half-length of the walk. Then for the generating function of kernel walks
    $K(u,v,z) = \sum_{\xi \geq 0} K_\xi(u,v,z)$
    it holds that
    \[
        K(u,v,z) = \frac{1}{\big(1-uz(K(u,v,z)-v)\big)}S\left(v, \frac{z}{\big(1-uz(K(u,v,z)-v)\big)^2}\right)-uvz(K(u,v,z)-v).
    \]
\end{lemma}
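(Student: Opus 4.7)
The plan is to establish a bijective decomposition of kernel walks into a superreduced walk plus a collection of excursions, and to translate this decomposition into the claimed functional identity. Given a kernel walk $W$ with induced tree $T$, let $T_0$ denote the connected component of the root of $T$ after every simple edge has been deleted, so $T_0$ contains only excess edges. The subsequence of $W$ formed by the steps along edges of $T_0$ is a walk $\tilde W$ on $T_0$; since every edge of $T_0$ was an excess edge of $W$, it is traversed at least four times in $\tilde W$, so $\tilde W$ is a superreduced walk. For each simple edge $\{w,w'\}$ of $T$ with $w\in T_0$ and $w'\notin T_0$, the simple edge is traversed exactly twice in $W$, and the contiguous subwalk bracketed by these two traversals lives entirely in the subtree of $T$ rooted at $w'$. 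One verifies that conditions (a) and (b) of Lemma~\ref{a:prop:char} transfer from $T$ to this subtree, so this subwalk is itself a kernel walk rooted at $w'$.

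In the reverse direction, given $\tilde W$ together with, for each position of $\tilde W$, an ordered sequence of excursions, one reassembles $W$ by inserting the excursions at each position in order. An excursion from a vertex $w$ consists of a simple edge to a new vertex $w'$ together with a kernel walk rooted at $w'$; the kernel walk must be non-trivial, since otherwise $w'$ would be a leaf of $T$ incident only to a simple edge, violating Lemma~\ref{a:prop:char}(a). The generating function of one excursion is therefore $X:=uz(K(u,v,z)-v)$, where $uz$ accounts for the new simple edge and its two traversals and $K-v$ enumerates non-trivial kernel walks at $w'$; an ordered sequence of excursions then contributes $1/(1-X)$. If $\tilde W$ has half-length $\ell_s$, it has exactly $2\ell_s+1$ positions at which excursions may be inserted (one before every step of $\tilde W$, plus one after its final step).

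Summing over all superreduced walks, weighted by the $2\ell_s+1$ slot factors, gives
\[
    \sum_{\tilde W}\frac{v^{m_s}}{m_s!}\,z^{\ell_s}\left(\frac{1}{1-X}\right)^{2\ell_s+1}=\frac{1}{1-X}\,S\!\left(v,\frac{z}{(1-X)^2}\right),
\]
which is the first term of the claimed formula. The only configurations produced by this reconstruction that fail to be valid kernel walks occur when $\tilde W$ is trivial (a single vertex, no edges) and exactly one excursion is inserted at the unique slot: the root then has outdegree one via a simple edge and is forbidden by Lemma~\ref{a:prop:char}(a). All other configurations are legitimate, because the root either stands alone, is incident to an excess edge (whenever $\tilde W$ is non-trivial), or has outdegree at least two (whenever two or more excursions are inserted at a trivial $\tilde W$). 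Subtracting the contribution $vX=uvz(K-v)$ of this single forbidden configuration yields the claimed identity. The main obstacle is the careful transfer of the kernel conditions of Lemma~\ref{a:prop:char} through the decomposition in both directions, together with pinning down the exact slot count $2\ell_s+1$, which is what dictates the substitution $Y=z/(1-X)^2$ inside $S$.
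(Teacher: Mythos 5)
Your proposal is correct and follows essentially the same route as the paper's proof: isolating the superreduced component at the root, grafting non-trivial kernel walks via simple edges into the $2\ell_s+1$ insertion slots (yielding the prefactor $1/(1-X)$ and the substitution $z\mapsto z/(1-X)^2$), and subtracting the single forbidden configuration of a trivial superreduced component with exactly one excursion, whose contribution is $uvz(K-v)$. Your verification of the kernel conditions via Lemma~\ref{a:prop:char} is in fact slightly more explicit than the paper's.
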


\begin{proof}
    Let $\mF$ denote the family of tree walks
    where the root is attached to a single simple edge
    which links it to a kernel walk on at least two vertices.
    Using $u$ to mark simple edges,
    $v$ to mark vertices and $z$ to mark the half-length of the walk,
    the generating function of $\mF$ is
    \[
        F(u,v,z) = u v z (K(u,v,z) - v).
    \]
    Let $\mG$ denote the family of tree walks
    obtained from a superreduced walk by inserting
    a sequence of walks from $\mF$
    after each step and an additional sequence before the first step.
    Then the generating function of $\mG$ is
    \[
        G(u,v,z) =
        \frac{1}{1 - u z (K(u,v,z) - v)}
        S \left(
            v,
            \frac{z}{(1 - u z (K(u,v,z) - v))^2}
        \right).
    \]
    Consider a kernel walk $K$.
    Let $S$ denote the component containing the root
    when all simple edges of $K$ are removed.
    By definition, $S$ is a superreduced walk.
    This decomposition shows that each kernel
    corresponds to a unique element of $\mG$.
    Reciprocally, all elements from $\mG$ are kernels,
    except the walks obtained by starting with $S$
    reduced to a single vertex, to which
    a unique element of $\mF$ is attached.
    Those walks have generating function
    \[
        u v z(K(u,v,z) - v).
    \]
    Indeed, those walks are not kernels,
    since the root of the tree would be moved in Step 3
    of the procedure in Definition \ref{def:kernels}.
    Thus, the generating function of kernels is
    \[
        \frac{1}{1 - u z (K(u,v,z) - v)}
        S \left(
            v,
            \frac{z}{\left(1 - u z^2 (K(u,v,z) - v)\right)^2}
        \right)
        - u v z(K(u,v,z) - v).
    \]
\end{proof}

\begin{figure}
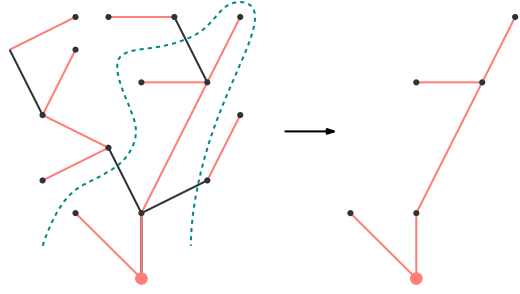

    \centering
        \includegraphics[width=0.25\textwidth, page = 10]{algorithm.pdf} 
         \includegraphics[width=0.25\textwidth, page = 11]{algorithm.pdf}
        \caption{Decomposing kernel walks by isolating the superreduced component including the root (red)}
        \label{fig:decomp_kernel}
\end{figure}

Once given the generating function $S(v,z)$ of superreduced walks, we compute $K(u,v,z)$ by Lagrange inversion (see \eg \cite{gessel2016lagrange}).

\begin{lemma}\label{a:lem:lagrange}
Let $S(v,z)$ denote the generating function
of superreduced walks,
and $K(u,v,z)$ the generating function of kernels,
where $u$ marks the simple edges, $v$ the vertices
and $z$ the half-length. Then
\[
    K(u,v,z) =
    S(v,z)
    + \sum_{s \geq 1}
    \frac{u^s}{s+1}[t^s]\left(\frac{1}{1-zt}S\left(v,\frac{z}{(1-zt)^2}\right)-v z t - v\right)^{s+1}.
\]
Let $K_{\xi}(u,v,z)$, $S_{\xi}(v,z)$ and $S_{\leq \xi}(v,z) = \sum_{k=0}^{\xi} S_k(v,z)$
denote the generating functions of kernels of excess $\xi$, superreduced walks of excess $\xi$
and superreduced walks of excess at most $\xi$, then
\[
    K_{\xi}(u,v,z) =
    S_{\xi}(v,z)
    + \sum_{s = 1}^{2 \xi - 2}
    \frac{u^s}{s+1}[t^s y^{\xi-1}]\left(\frac{1}{1 - y z t} S_{\leq \xi}\left(\frac{v}{y},\frac{y z}{(1-y z t)^2}\right) - v z t - \frac{v}{y}\right)^{s+1}.
\]
\end{lemma}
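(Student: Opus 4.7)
The plan is to apply Lagrange inversion to the functional equation established in Lemma~\ref{th:link:kernel:superreduced}. Starting from
\[
    K(u,v,z) = \frac{1}{1 - uz(K-v)}\, S\!\left(v,\frac{z}{(1-uz(K-v))^2}\right) - uvz(K-v),
\]
I would introduce the substitution $\psi = u\bigl(K(u,v,z)-v\bigr)$, treating $v$ and $z$ as passive parameters. Multiplying the equation through by $u$ and using $uK = \psi + uv$, a direct rearrangement shows that $\psi$ satisfies the implicit equation
\[
    \psi = u\, \Phi(\psi), \qquad \text{where}\qquad \Phi(t) := \frac{1}{1-zt}\, S\!\left(v, \frac{z}{(1-zt)^2}\right) - vzt - v.
\]
Since $\Phi(0) = S(v,z) - v$ has no $u$-dependence, this is precisely the setting for the Lagrange inversion formula: for every $n \geq 1$,
\[
    [u^n]\, \psi = \frac{1}{n}\, [t^{n-1}]\, \Phi(t)^n.
\]

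Reconstructing $K$ from $\psi$ via $K = v + \psi/u$ and relabeling $s = n-1$ gives
\[
    K(u,v,z) = v + \sum_{s \geq 0} \frac{u^s}{s+1}\, [t^s]\, \Phi(t)^{s+1}.
\]
The $s = 0$ term evaluates to $\Phi(0) = S(v,z) - v$, which absorbs the additive $v$ and yields the first claimed identity. This step is essentially bookkeeping.

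For the refined, excess-graded identity, I would mark the excess by a formal variable $y$ using the weight-preserving substitution $v \mapsto v/y$, $z \mapsto yz$: since a kernel of excess $\xi$ on $m$ vertices has half-length $\ell = m-1+\xi$, the monomial $\tfrac{v^m}{m!}z^\ell$ picks up an overall factor $y^{\xi-1}$, so $K_\xi(u,v,z) = [y^{\xi-1}]\, K(u, v/y, yz)$ and likewise $S_\xi(v,z) = [y^{\xi-1}]\, S(v/y, yz)$. Applying this substitution to the previous display and extracting the coefficient of $y^{\xi-1}$ turns $S$ into $S_{\leq \xi}$ (higher-excess contributions carry higher powers of $y$ and do not affect the coefficient) and leaves the $vzt$ and $v/y$ terms unchanged. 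Finally, by Lemma~\ref{lem:R_k} the polynomial $K_\xi(u,v,z)$ has degree at most $2\xi-2$ in $u$, which truncates the sum to $s \leq 2\xi-2$ and yields the stated formula.

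The only genuine subtlety is verifying that the Lagrange inversion is applicable, i.e.\ that $\psi$ is a well-defined formal power series in $u$ with vanishing constant term: this is immediate since $K(0,v,z) - v = S(v,z) - v$ is independent of $u$ and multiplication by $u$ kills the constant, so $\psi \in u\cdot \mathbb{Q}[[u,v,z]]$ as required. The restriction to $S_{\leq \xi}$ and to $s \leq 2\xi-2$ in the second formula is the only place where a short justification from the bounds in Lemma~\ref{lem:R_k} is needed; everything else is formal manipulation.
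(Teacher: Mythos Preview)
Your proof is correct and follows essentially the same approach as the paper: your substitution $\psi = u(K-v)$ is exactly the paper's $L$, Lagrange inversion is applied to the identical $\Phi$, and the excess extraction via $v\mapsto v/y$, $z\mapsto yz$ together with the degree bound from Lemma~\ref{lem:R_k} is precisely how the paper obtains the second formula.
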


\begin{proof}
Set $L(u,v,z) = u(K(u,v,z) - v)$ and slightly reformulate the equation of the previous lemma to
\begin{align*}
    L(u,v,z) = u\left(\frac{1}{1-z L(u,v,z)}S\left(v,\frac{z}{(1-zL(u,v,z))^2}\right)-v z L(u,v,z) - v\right)
\end{align*}
which satisfies the condition
\[
    L(u,v,z) = u \cdot \Phi(L(u,v,z),v,z)
\]
for
\[
    \Phi(t,v,z) = \frac{1}{1-zt}S\left(v,\frac{z}{(1-zt)^2}\right)-vz t - v.
\] 
Applying the Lagrange inversion theorem, we finally obtain
\[
    [u^s] u(K(u,v,z)-v) = [u^s] L(u,v,z) = \frac{1}{s}[t^{s-1}]\left(\frac{1}{1-zt}S\left(v,\frac{z}{(1-zt)^2}\right)-v z t - v\right)^s.
\]
By definition, a kernel without simple edge is superreduced, so $[u^0] K(u,v,z) = S(v,z)$.
For the last equation, we deduce for any $s \geq 1$
\[
    [u^s] K(u,v,z) =
    \frac{1}{s+1}[t^s]\left(\frac{1}{1-zt}S\left(v,\frac{z}{(1-zt)^2}\right)-v z t - v\right)^{s+1}.
\]
The first result of the lemma follows
by multiplying by $u^s$ and summing over $s$.

For the second result,
we use the fact that kernels of excess $\xi$ contain at most $2 \xi - 2$ simple edges,
so computing $K_{\xi}(u,v,z)$ requires only
summing up to $s = 2 \xi - 2$.
Further, since the excess is the half-length minus the number of vertices plus one,
we have
\[
    K_{\xi}(u,v,z) =
    [y^{\xi-1}]
    K \left( u, \frac{v}{y}, y z \right),
    \qquad
    S_{\xi}(u,v,z) =
    [y^{\xi-1}]
    S \left( u, \frac{v}{y}, y z \right),
\]
which provides the second result.
\end{proof}

\begin{comment}
The previous lemma computes the generating function $K_{\xi}(v,z)$ of kernels of excess $\xi$
using the polynomial
$S_{\leq \xi}(v,z)$ of superreduced walks
of excess at most $\xi$.
\end{comment}
Our next lemma provides an equation characterizing $S(v,z)$,
from which $S_{\leq \xi}(v,z)$ is computable.
%It is thus certainly a significant part of the decomposition of tree walks.
The proof relies on the idea from \cite{physics}
to mark the number of times the walk leaves the root (see Figure \ref{fig:decomp_tree}). Applying the symbolic method \cite{BLL97, FS09}
to translate it into generating functions results in a series $S(x,v,z)$ for superreduced walks,
where a new auxiliary variable $x$
marks how often the walk leaves the root.
%Furthermore, the equation characterizing $S(x,v,z)$ is well founded both in $z$ and $v$, allowing the straight forward computation of the corresponding series for any fixed excess using computer algebra.

\begin{lemma}\label{lem:super}
    Let $s_{j,m,2\ell}$ denote the number of superreduced walks
    on $m$ vertices, length $2\ell$
    and leaving the root $j$ times.
    Let
    \[
        S(x,v,z) =
        \sum_{j,m,\ell \geq 0}
        s_{j,m,2\ell}
        \frac{x^j}{j!}
        \frac{v^m}{m!}
        z^{\ell}
    \]
    denote the generating function of superreduced kernel walks, where $z$ marks the half-length of the walk, $v$ the number of vertices in the induced tree and $x$ how often the walk leaves the root. Then
    \[
        S(x,v,z) = v \exp \left( \laplace_{t=1} \Big(D\left(t,xz\right)S(t,v,z)\right)\Big)
    \]
    where $D(t,x) = \sum_{k \geq 1} \frac{x^{k+1}}{(k+1)!} \frac{t^k}{k!}$ and $\laplace_{t=1} \left( A(t) \right) = \sum_{k \geq 0} k!\, [t^k] A(t)$.
\end{lemma}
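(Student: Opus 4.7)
The proof would proceed by a root-decomposition of superreduced walks, translated into generating functions. Starting from a superreduced walk $W$ rooted at $r$, the children of $r$ in the induced tree form an unordered labeled set. For each child $v_i$, two pieces of data are needed: the multiplicity $K_i \geq 2$ of the edge $(r, v_i)$ (half the number of times this edge is traversed, at least $2$ since every edge of a superreduced walk is an excess edge traversed at least four times), and the sub-walk $T_i$ obtained by concatenating in order the portions of $W$ spent in $v_i$'s subtree, which is itself a superreduced walk rooted at $v_i$. Since $T_i$ appears in $W$ as $K_i$ consecutive (possibly empty) pieces interspersed with returns to $r$, a splitting of $T_i$ into $K_i$ ordered pieces must also be specified; if $T_i$ has $j_i$ excursions from $v_i$, the number of such splittings equals $\binom{j_i + K_i - 1}{K_i - 1}$, the number of weak compositions of $j_i$ into $K_i$ parts.

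In generating function language, the root contributes a factor $v$, the children form an unordered labeled set (handled by $\exp$), and the interleaving of the $K_i$ pieces of each child among the full sequence of root-excursions at $r$ is handled by taking $x$ as an EGF variable, so that EGF multiplication in $x$ realizes the shuffles. Computing the EGF of a single child yields
\[
    C(x,v,z) = \sum_{K \geq 2} \frac{(xz)^K}{K!} \sum_{j,m,\ell} \binom{j+K-1}{K-1}\, s_{j,m,\ell}\, \frac{v^m}{m!}\, z^\ell,
\]
and the root decomposition then gives $S(x,v,z) = v \exp(C(x,v,z))$.

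It remains to identify $C(x,v,z)$ with $\laplace_{t=1}\bigl(D(t,xz)\,S(t,v,z)\bigr)$. Viewing $D(t,xz)$ and $S(t,v,z)$ as EGFs in the auxiliary variable $t$, the coefficient of $\frac{t^n}{n!}$ in their labeled product produces the shuffle binomial $\binom{k+j}{k}$, which matches the splitting count; the operator $\laplace_{t=1}$ then sums these coefficients over $n$, and substituting $K = k+1$ recovers $C(x,v,z)$ term by term. The main obstacle is the careful bookkeeping of the three entangled combinatorial counts---the interleaving at the root, the splitting at each child, and the choice of edge multiplicity---so that they exactly reproduce the prefactors in $D(t,xz)$ together with the action of $\laplace_{t=1}$; once this identification is unpacked, the functional equation follows directly.
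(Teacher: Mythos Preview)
Your proposal is correct and takes essentially the same approach as the paper: a root decomposition of superreduced walks where each child contributes a sub-walk, an edge multiplicity $K\geq 2$, and an interleaving count, all assembled via $\exp$ with $x$ treated exponentially. The paper encodes the interleaving through a bijection with a subset $A\subseteq\{1,\dots,k\}$ of the steps leaving the child (yielding the same binomial $\binom{j+K-1}{K-1}$ via the EGF product $D(t,xz)S(t,v,z)$), whereas you compute this weak-composition count directly and then verify it matches $\laplace_{t=1}(D\cdot S)$; the two presentations are equivalent.
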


\begin{proof}
Let $\mF$ denote the family of superreduced walks
composed of a root of degree $1$,
linked by an excess edge
to a superreduced walk.
Let $f_{k,j,m,2\ell}$ denote the number of such walks
of length $2 \ell$, on $m$ vertices,
leaving the root $j$ times,
and leaving the only child of the root $k+1$ times.
We associate to the family $\mF$ the generating function
\[
    F(t,x,v,z) =
    \sum_{k,j,m,\ell \geq 0}
    f_{k,j,m,2\ell}
    \frac{t^k}{k!}
    \frac{x^j}{j!}
    \frac{v^m}{m!}
    z^{\ell}.
\]
Consider a walk $W \in \mF$,
let $r$ denote its root,
$c$ the only child of $r$,
and $W'$ the walk of root $c$
obtained from $W$ by removing $r$.
Assume the number of steps leaving $c$ is $k+1$.
We construct two disjoint sets $A$ and $B$
whose union is $\{1, 2, \ldots, k\}$.
For all $n \in \{1,2,\ldots,k\}$,
if the $n$th step leaving $c$
goes to the root $r$, then $n \in A$.
Otherwise, $n \in B$.
We did not include the $(k+1)$th step leaving $c$,
as we know it must always reach $r$.
Observe that knowing $A$, $B$ and $W'$
is enough to reconstruct $W$.
Further, if $A$ has size $h$,
then $W$ contains $h+1$ steps leaving its root $r$
and the length of $W$ is the length of $W'$ plus $2 (h+1)$.
Because of the choice to make $t$ an exponential variable
in the generating function $F(t,x,v,z)$,
this bijection implies
\[
    F(t,x,v,z) =
    D(t, x z) S(t,v,z).
\]
In order to forget the variable $t$
in the generating function $F(t,x,v,z)$,
we use the Laplace operator
\[
    \sum_{k,j,m,\ell \geq 0}
    f_{k,j,i,2\ell}
    \frac{x^j}{j!}
    \frac{v^m}{m!}
    z^{\ell}
    =
    \laplace_{t=1}(F,t,x,v,z).
\]
Finally, a superreduced tree walk is a root
and a set (possibly empty)
of elements from $\mF$, so
\[
    S(x,v,z) = v \exp \left( \laplace_{t=1}(F,t,x,v,z) \right).
\]
\end{proof}

\begin{figure}
        \centering
        \includegraphics[width=0.3\textwidth, page=7]{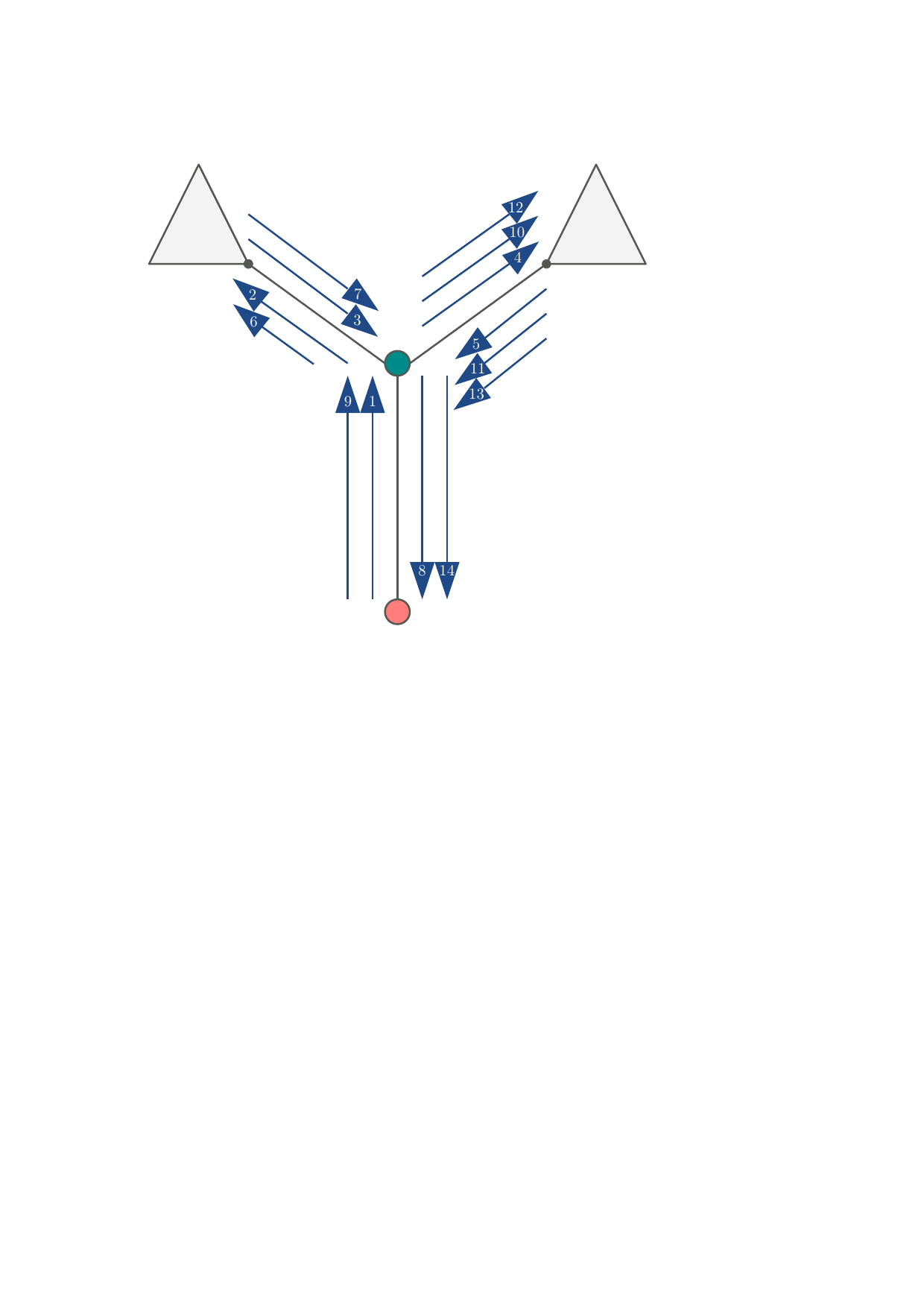}
        %\hspace{1cm}
        %\includegraphics[width=0.3\textwidth, page=6]{rotations.pdf}
        \caption{Decomposition of a superreduced walk}
        \label{fig:decomp_tree}
\end{figure}

By implementing this well founded recursion in $v$ \emph{and} $z$ it is easy to compute $W_\xi(z)$ for small excess $\xi$. We only need to compute superreduced walks with excess at most $\xi$. That is, we compute the coefficients of $S(v,z)$ up to order $\xi+1$ in $v$ and $2\xi$ in $z$.
Then we compute the first terms of $K(u,v,z)$ which are $K_1(v,z), K_2(v,z), \dots, $ $K_\xi(v,z)$ by Lemma~\ref{a:lem:lagrange}.
The generating functions which we obtained for superreduced walks with fixed excess $\xi$, denoted by $S_\xi(z) = [y^{\xi}] yS\left(\frac{1}{y},\sqrt{y}z\right)$ and  $K_\xi(u,1,z)$, where $\xi \leq 5$ are given in Table \ref{tab:my_label}. The size of the polynomials illustrates well why the reduction to superreduced walks is computationally more powerful.

\begin{table}
    \centering
    \begin{tabular}{c|c|c}
         $\xi$ & $S_{\xi}(z)$ & $K_\xi(u,1,z)$
         \\ &&\\
         \hline &&\\
         0 & $1$ & $1$ \\ &&\\
         \hline &&\\
         1 & $z^2$ & $z^2$\\ &&\\
         \hline &&\\
         2 & $6z^4 + z^3$ & $u^2z^{6} + 5uz^{5} + 6z^4 + z^3$ \\ &&\\
         \hline &&\\
         3 & $57z^{6} + 20z^{5} + z^{4}$ & $2u^4z^{10} + 16u^3z^{9} + 52u^2z^{8} + 2u^2z^{7}$  \\
          & & $+ 84uz^{7} + 12uz^{6} + 57z^{6} + 20z^{5} + z^4$  \\ &&\\
         \hline &&\\
         4 & $678z^{8} + 378z^{7} + 50z^{6} + z^{5}$ & $5u^6z^{14} + 55u^5z^{13} + 267u^4z^{12} + 6u^4z^{11}+ 745u^3z^{11}$  \\
          & & $  + 54u^3z^{10} + 1290u^2z^{10} + 205u^2z^{9} + 3u^2z^{8} +1350uz^{9}$  \\
         
         & & $ + 416uz^{8} + 21uz^{7} + 678z^{8} + 378z^{7} + 50z^{6} + z^{5}$  \\ &&\\
         \hline &&\\
         5 & $9270z^{10} + 7272z^{9} + 1684z^{8}$ & $14u^8z^{18} + 196u^7z^{17} + 1254u^6z^{16} + 20u^6z^{15}$  \\ 
          & $+ 112z^{7} + z^{6}$ & $ + 4836u^5z^{15} + 240u^5z^{14} + 12453u^4z^{14} + 1296u^4z^{13} $
          \\ 
          &  & $ + 12u^4z^{12} + 8(2787u^3z^{13} + 517u^3z^{12} + 15u^3z^{11})$ \\
          & & $+ 2(13854u^2z^{12} + 4251u^2z^{11} + 266u^2z^{10} + 2u^2z^{9})$
          \\ 
          & & $+ 4(5610uz^{11} + 2775uz^{10} + 342uz^{9} + 8uz^{8})$\\ 
          & & $+ 9270z^{10} + 7272z^{9} + 1684z^{8} + 112z^{7} + z^{6}$
          %\\
          %&&\\
          %\hline &&%\\
          %6 & $139968z^{24} + 144177z^{22} + 49500z^{20}$ & \\
          %& $+ 6323z^{18} + 238z^{16} + z^{14}$ & ...
    \end{tabular}
    \caption{Caption}
    \label{tab:my_label}
\end{table}

For example for $\xi=1,2,3$, we obtain the generating functions
\begin{align*}
    W_1(z) &= \frac{z^2C\left(z\right)^5}{1-zC\left(z\right)^2}\\
    W_2(z) &= C\left(z\right)\left(\frac{z^3C\left(z\right)^{6}+4z^4C\left(z\right)^8-6z^{5}C\left(z\right)^{10}+2z^{6}C\left(z\right)^{12}}{\left(1-zC\left(z\right)^2\right)^3}\right)\\
    W_3(z) &= z^4C\left(z\right)^{9}
    \left(\frac{1+ 16zC\left(z\right)^2+11z^{6}C\left(z\right)^{12}  + 95z^4C\left(z\right)^8}{\left(1-zC\left(z\right)^2\right)^5}\right)\\
    &\quad - z^4C\left(z\right)^{9}\left(\frac{54z^{5}C\left(z\right)^{10}+62z^3C\left(z\right)^6 + 5z^2C\left(z\right)^4 }{\left(1-zC\left(z\right)^2\right)^5}\right),
\end{align*}
recovering and extending
the results of \cite{physics}
and of \cite{Bushygraphs}
(except for $W_2(z)$ where our calculation differ
from \cite{Bushygraphs} and agree with \cite{physics}). It is further straight forward to confirm the conjectured closed expressions of the coefficients in \cite[p~31]{physics}.
While the authors also gave asymptotics of $w_{m,\ell}$ for fixed $m$ as $\ell$ grows larger in terms of the Stirling numbers of the second kind, our decomposition provides asymptotics for arbitrary fixed excess and $\ell$ tending to infinity. A deeper understanding of the polynomials $K_\xi(u,v,z)$ in Lemma \ref{lem:R_k} would allow us to fully describe the (bivariate) asymptotics for walks of length $2(m+\xi-1)$ and excess $\xi$ and therefore the full asymptotics of the moments of the spectral measure. The presence of a Laplace transform in the expression of $S(x,u,v,z)$ will be an interesting challenge for this analysis.

        \section{A refined normalisation of the spectral measure and some curious identities}\label{sec:cons}

In this section, we return to our initial motivation to describe the moments of the spectral measure $\mu^c$ by the identity
\[
    M_{\mu^c}(z) = \frac{1}{c} W\left(c,\frac{z^2}{c}\right) = \sum_{\xi\geq 0} \frac{1}{c^\xi} W_\xi\left(z^2\right).
\]
As Zacharevich~\cite{inna_z} pointed out, $\mu^c$ is fully determined by its moments and if $\mu^c$ were a continuous measure, we could compute its density by the inversion formula of Stieltjes-Perron. This is not the case ($\mu^c$ has a dense set of atoms \cite{chayes1986density,spec3}), but nonetheless a better understanding of the Stieltjes transform of $\mu^c$ would entail a better understanding of the measure itself.

In combinatorial terms, the Stieltjes transform $S_\mu(z)$ of a measure $\mu$ with finite moments is simply the ordinary generating function of moments evaluated at $z^{-1}$ multiplied by $z^{-1}$. That is,
\[
    S_{\mu}(z) = \sum_{\ell \geq 0} m_{\ell}(\mu) z^{-(\ell+1)}.
\]
In turn, under some conditions, the Stieltjes-Perron formula
expresses the density $\rho$ of the measure $\mu$ by
\begin{equation}\label{eq:inversion}
    \rho(z) =
    \lim_{\varepsilon \rightarrow 0} -\frac{1}{\pi} \mbox{Im}\left(S_\mu(z+i\varepsilon)\right).
\end{equation}
For example, the Stieltjes transform of the limit law $\mu$ of the normalized spectral measure of $G(n,p)$ with $p$ constant, and its density,
are respectively
\[
    S_{\mu}(z) = \frac{1}{z}C\left(\frac{1}{z^{2}}\right),
    \qquad
    \lim_{\varepsilon \rightarrow 0} -\frac{1}{\pi} \mbox{Im}\left(S_\mu(z+i\varepsilon)\right) = \frac{\sqrt{4-z^2}}{2\pi} \mathds{1}_{(-2,2)}(z).
\]
The distribution given by this density is called after its shape, the \emph{semicircle distribution}.
The Stieltjes transform of $\mu^c$ equals
\[
    S_{\mu^c}(z) = \frac{1}{z} M_{\mu^c}\left(\frac{1}{z}\right)= \sum_{\xi \geq 0} \frac{1}{zc^\xi} W_\xi \left(\frac{1}{z^2}\right).
\]
Given the structure of $W_{\xi}(z)$ from Theorem~\ref{thm:strucT_k},
%the functions $W_\xi(z)$ are polynomials in $zC\left(z\right)^2$ with an extra factor of $C\left(z\right)$ and several factors of $(1-zC\left(z\right)^2)^{-1}$. If we evaluate these expressions at $z^2$ it becomes apparent that
$S_{\mu^c}(z)$ is a sum of rational functions in $S_\mu(z)$
\[
    S_{\mu^c}(z) = S_\mu(z)+\frac{1}{c}\cdot\frac{S_\mu(z)^5}{1-S_\mu(z)^2} + \frac{1}{c^2}\cdot\frac{S_\mu(z)^7+4S_\mu(z)^9-6S_\mu(z)^{11}+2S_\mu(z)^{13}}{\left(1-S_\mu(z)^2\right)^3} +\dots
\]
Now one could hope that the inversion formula applied to each of the $z^{-1}W_\xi(z^{-1})$ would yield a density of a measure and the density of~$\mu_c$ would turn out to be a weighted sum of them. This hope is certainly too far fetched, as $\mu^c$ has a dense set of atoms. But Enriquez and Ménard \cite{Bushygraphs} found a way to still make use of this expansion by using a dilation operator in their Theorem 3.
The main idea is to scale the spectral measure and evaluate $M_{\mu^c}(z)$ at $z/(1+\frac{1}{2c})$ instead. This scaling entails a perturbation on the level of coefficients of $1/c$. In particular,
$\sum_{\ell \geq 0}
    m_{2\ell}
    \left( \frac{z}{1+\frac{1}{2c}}\right)^{2\ell}$
is equal to
\[
    \sum_{\ell\geq 0}
    \textstyle{
    \left(
    w_{0,2\ell}z^{2\ell}
    + \frac{1}{c}(w_{1,2\ell}-\ell w_{0,2\ell})z^{2\ell}
    +\frac{1}{c^2} \left(w_{2,2\ell} -\ell w_{1,2\ell} + \left(\frac{\ell^2}{2}+\frac{\ell}{4}\right)w_{0,2\ell}\right)z^{2\ell} + \dots
    \right).}
\]
Now the generating functions at $c^{-1}$ and $c^{-2}$
are polynomials in $z^2 C(z^2)^2$ multiplied by $C(z^2)$,
and the corresponding densities can be computed with the inversion formula.
The result of Enriquez and Ménard that we extend is

\begin{theorem}[Enriquez, Ménard~\cite{Bushygraphs}] 
    Let $\mu^c$ be the limit of the normalised spectral measure, let $\sigma$ be the semi-circle law and let $\sigma^{\{1\}}$ and $\sigma^{\{2\}}$ be the signed measures with null mass and densities\footnote{Note that $f^{\{1\}}(z)$ is the same as in~\cite{Bushygraphs} after factoring the nominator.}
    \[
        f^{\{1\}}(z) = \frac{1}{2\pi}(1-z^2)\sqrt{4-z^2}\mathds{1}_{(-2,2)}(z), 
    \]
    and
    \[
        f^{\{2\}}(z) = -\frac{1}{\pi}\left(\frac{21}{4}-\frac{325}{8}z^2+46z^4-17z^6+2z^8\right)\frac{1}{\sqrt{4-z^2}}\mathds{1}_{(-2,2)}(z)
        \]
    respectively.
    The moments of $\mu^c$ satisfy the following asymptotic expansion
    \[
        m_\ell\left(\mu^c\right) = m_\ell\left(\Lambda_{1+\frac{1}{2c}}\left(\sigma + \frac{1}{c}\sigma^{\{1\}}+\frac{1}{c^2}\sigma^{\{2\}}\right)\right)+o\left(\frac{1}{c^2}\right),
    \]
    where $\Lambda_{\alpha}$ denote the dilation operator that transforms a measure $\mu$ for $\alpha>0$ into the measure $\Lambda_\alpha(\mu)$ satisfying for every Borel set $A$ that $\Lambda_\alpha(\mu)(A) = \mu(A/\alpha)$.
\end{theorem}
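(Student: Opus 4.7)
The plan is to convert the claim into an identity of formal power series in $1/c$ and verify it coefficient by coefficient using the closed forms of Theorem~\ref{thm:strucT_k}. Since the dilation acts on moment ordinary generating functions by $M_{\Lambda_\alpha \mu}(z) = M_\mu(\alpha z)$, with $\alpha = 1 + 1/(2c)$ the theorem becomes
\[
M_{\mu^c}\!\left(\frac{z}{1+1/(2c)}\right) = M_\sigma(z) + \frac{1}{c} M_{\sigma^{\{1\}}}(z) + \frac{1}{c^2} M_{\sigma^{\{2\}}}(z) + o\!\left(\frac{1}{c^2}\right).
\]
Starting from $M_{\mu^c}(z) = \sum_{\xi \geq 0} c^{-\xi} W_\xi(z^2)$, I would expand $z^2/(1 + 1/(2c))^2 = z^2 - z^2/c + 3 z^2/(4c^2) + O(c^{-3})$ and Taylor-expand each $W_\xi$ about $z^2$. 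Collecting powers of $1/c$ produces the left-hand-side coefficients $V_0(z) = W_0(z^2)$, $V_1(z) = W_1(z^2) - z^2 W_0'(z^2)$, and
\[
V_2(z) = W_2(z^2) - z^2 W_1'(z^2) + \tfrac{3 z^2}{4} W_0'(z^2) + \tfrac{z^4}{2} W_0''(z^2),
\]
so it remains to prove $V_i(z) = M_{\sigma^{\{i\}}}(z)$ for $i \in \{0, 1, 2\}$.

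The next step is to simplify each $V_i(z)$ using $C(z) = 1 + z C(z)^2$ (equivalently $1 - z C(z)^2 = 2 - C(z)$) and the derived identity $C'(z) = C(z)^3/(2 - C(z))$. For $V_0$ this is immediate: $V_0(z) = C(z^2) = M_\sigma(z)$. For $V_1$, the closed form $W_1(z^2) = z^4 C(z^2)^5/(2 - C(z^2))$ from Theorem~\ref{thm:strucT_k} combines with $z^2 W_0'(z^2) = z^2 C(z^2)^3/(2 - C(z^2))$, the factor $2 - C(z^2)$ cancels, and one finds $V_1(z) = -z^2 C(z^2)^3 = C(z^2) - C(z^2)^2$. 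For $V_2$ I would carry out the analogous but lengthier simplification, expressing $W_1'$ and $W_0''$ through repeated use of the identity above so that every denominator cancels, producing a polynomial in $z^2 C(z^2)^2$ multiplied by $C(z^2)$.

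To identify each $V_i(z)$ with $M_{\sigma^{\{i\}}}(z)$, I would pass to Stieltjes transforms $S_i(z) = z^{-1} V_i(z^{-1})$, which become rational functions of $S_\mu(z) = z^{-1} C(z^{-2})$, and invoke the inversion formula~(\ref{eq:inversion}) on $(-2, 2)$. Applied to $S_\mu$, this formula recovers the semicircle density $\sqrt{4 - z^2}/(2\pi)$; the same mechanism, applied to the rational combinations obtained for $S_1$ and $S_2$, yields the stated densities $f^{\{1\}}$ and $f^{\{2\}}$. The vanishing of the total masses of $\sigma^{\{1\}}$ and $\sigma^{\{2\}}$ reduces to $V_1(0) = V_2(0) = 0$, which is forced by $m_0(\mu^c) = 1$ for every $c$. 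The principal obstacle is the algebraic bookkeeping at order $1/c^2$: the closed form of $W_2$ contains several monomials in $z C(z)^2$, and the derivative $W_1'$ introduces additional rational factors in $C$ that must cancel exactly. That the single dilation factor $1 + 1/(2c)$ produces closed forms for both $V_1$ \emph{and} $V_2$ is non-trivial; this is precisely the phenomenon that Conjecture~\ref{conj} seeks to extend, and its apparent breakdown at higher orders explains why Theorem~\ref{thm:expand} must replace $1 + 1/(2c)$ by the more elaborate series $f(1/c)$.
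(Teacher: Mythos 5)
First, note that the paper itself does not prove this statement: it is quoted from Enriquez and Ménard~\cite{Bushygraphs}, and the closest thing to a proof in the paper is the argument for Theorem~\ref{thm:expand}, whose strategy (expand $M_{\mu^c}(z/\alpha)$ in powers of $1/c$, reduce each coefficient via Theorem~\ref{thm:strucT_k}, then apply Stieltjes inversion) you correctly reproduce; your computations of $V_0$ and $V_1$ are also correct. The genuine error is your claim that at order $c^{-2}$ ``every denominator cancels, producing a polynomial in $z^2C(z^2)^2$ multiplied by $C(z^2)$.'' It does not. Writing $t=zC(z)^2$ (so $1-t=2-C(z)$) and working in the argument of the $W_\xi$'s, a direct computation from $W_2(z)=C(z)(t^3+4t^4-6t^5+2t^6)/(1-t)^3$, $W_1'(z)=2C(z)^3t(1+t-t^2)/(1-t)^3$, $W_0'(z)=C(z)^3/(1-t)$ and $W_0''(z)=C(z)^5(4-2t)/(1-t)^3$ gives
\[
    W_2(z)-zW_1'(z)+\tfrac{3z}{4}W_0'(z)+\tfrac{z^2}{2}W_0''(z)
    = C(z)\left(-2t^3+\frac{3t}{4(1-t)}\right),
\]
which retains a simple pole at $t=1$. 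This is forced by the statement you are proving: $f^{\{2\}}$ carries a factor $1/\sqrt{4-z^2}$, not $\sqrt{4-z^2}$, and a polynomial in the semicircle Stieltjes transform can only produce densities of the form (polynomial)$\times\sqrt{4-z^2}$. The paper makes the same point: the Enriquez--Ménard dilation corresponds to $p(x)=1+x+\tfrac14x^2$, and by the uniqueness discussion around~(\ref{eq:faa}) polynomiality at order $c^{-2}$ requires the $x^2$-coefficient $1$ of $p_5$, not $\tfrac14$. So the breakdown of polynomiality already occurs at order two with the dilation $1+\tfrac{1}{2c}$; it is not a higher-order phenomenon as your closing sentence suggests, and it is exactly what motivates Proposition~\ref{cor:Vi} and Conjecture~\ref{conj}.

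The argument is repairable, because the inversion formula~(\ref{eq:inversion}) applies equally well to the rational function $-2S^7+\tfrac{3}{4}S^3/(1-S^2)$ of $S=S_\sigma(z)$; the $1/(1-S^2)$ term is precisely what produces the $1/\sqrt{4-z^2}$ singularity of $f^{\{2\}}$. But you must then actually match the resulting density to the stated one, and there you face a second obstruction: carrying out the inversion with the paper's $W_2$ yields a polynomial over $\pi\sqrt{4-z^2}$ whose leading coefficient is $-1$ rather than the $2$ appearing in $f^{\{2\}}$. This is consistent with the paper's explicit remark that its $W_2$ disagrees with that of~\cite{Bushygraphs}; a proof along your lines therefore either establishes a corrected version of $f^{\{2\}}$ or must reconcile the two values of $W_2$ before the theorem as stated can be recovered.
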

%\begin{align*}
%    \sum_{\ell\geq 0} m_{2\ell}\left(\frac{z}{1+\frac{1}{2c}}\right)^{2\ell} &=\sum_{\ell\geq 0} \Bigg(w_{0,2\ell}z^{2\ell} + \frac{1}{c}(w_{1,2\ell}-\ell w_{0,2\ell})z^{2\ell} \\
%    &\hspace{27mm}+\frac{1}{c^2}\left(w_{2,2\ell} -\ell w_{1,2\ell} + \left(\frac{\ell^2}{2}+\frac{\ell}{4}\right)w_{0,2\ell}\right)z^{2\ell} + \dots\Bigg).
%\end{align*}

We expand their calculation to order $5$ instead of $2$.
Instead of using the dilation operator, we can rescale the adjacency matrix $A(G(n,c/n))$ of $G(n,c/n)$ by $\frac{1}{\sqrt{c\, p(1/c)}}$ instead of $\frac{1}{\sqrt{c}}$. We define
\[
    \mu_n^{p} =
    \frac{1}{n}
    \sum_{\lambda \in Sp\left((c\, p(1/c))^{-1/2}A(G(n,c/n))\right)}
    \delta_\lambda,
\]
where $p(x)$ is a polynomial in $x$ with constant term $1$ which is yet to be determined, and $\mu^p$ for the limit as $n$ tends to infinity.
This implies
\[
    \mu^p_n = \Lambda_{p(1/c)^{-1/2}}(\mu^c_n)
    \quad \text{and} \quad
    M_{\mu^p}(z) = M_{\mu^c}\left(\frac{z}{\sqrt{p(1/c)}}\right).
\]
Hence, if $p(x) = 1+x+\frac{1}{4}x^2$, we end up with the same scaling as Enriquez and Ménard used for their expansion. However, we can choose $p(x)$ such that $[c^{-i}] M_{{\mu}^p}\left(z\right)$ is a polynomial in $\left(zC(z^2)\right)^2$ multiplied by $C(z^2)$ for all $0\leq i \leq 5$.

The original scaling factor $1/\sqrt{c}$ derives from the classical scaling of Wigner matrices, where one scales the matrix by $1/\sqrt{n\mathbb{V}(X)}$, where $X$ is distributed as the individual  matrix entries. In the case of adjacency matrices of $G(n,c/n)$ the variance of Bernoulli variables determining the entries of the matrix is of course $c/n(1-c/n)$ such that we obtain the scaling factor $1/\sqrt{c}$ in the limit.
We do not have a similar interpretation for our proposed alternative scaling.
%but the effect of it is the same as evaluating the moment generating function at $z/\sqrt{p(1/c)}$. This is easily demonstrated by the computation of the moments of this measure and its limit distribution as $n$ tends to infinity.

\begin{proposition}\label{cor:Vi}
    Let $p_5(x) = 1+x+x^2+4x^3+33x^4+386x^5$ and $M_{\mu^c}(z)$ be the ordinary moment generating function of $\mu^c$ as defined in~(\ref{eq:ordGF}). Then for $V_i(z) = \left[c^{-i}\right] M_{\mu^p}\left(\sqrt{z}\right)$ we have
    \[
        V_i(z) = C(z)Q_i\left(zC(z)^2\right), \quad i=0,1,2,\dots,5,
    \]
    where
    \begin{align*}
        Q_0(x) &= 1,\qquad 
        Q_1(x) = -x, \qquad 
        Q_2(x) = -2x^3,\\
        Q_3(x) &= -\left(11x^{5} + x^4 - 2x^3 + 2x^2 + 3x\right), \\
        Q_4(x) &= -\left(90x^{7} + 27x^{6} - 19x^{5} + 17x^4 + 23x^3
        + 20x^2 + 26x\right),\\
        Q_5(x) &= -\left(931x^{9} + 529x^{8} - 163x^{7} + 166x^{6}  + 301x^{5} + 239x^4 + 249x^3 + 266x^2 + 324x\right).
    \end{align*}
\end{proposition}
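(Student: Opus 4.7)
\textbf{Proof strategy for Proposition~\ref{cor:Vi}.}
The plan is to expand $M_{\mu^p}(\sqrt{z}) = \sum_{\xi\ge 0} c^{-\xi} W_\xi\bigl(z/p_5(1/c)\bigr)$ as a power series in $c^{-1}$, extract the coefficient of $c^{-i}$ for $i=0,\dots,5$, and verify the closed form. The first step is to produce explicit expressions for $W_\xi(z)$ for $\xi=0,1,\dots,5$ using Theorem~\ref{thm:strucT_k}, combined with Lemmas~\ref{lem:T_k}, \ref{a:lem:lagrange} and~\ref{lem:super}: the cases $\xi\le 3$ are already displayed after Table~\ref{tab:my_label}, and the computation of $S_4,S_5$ in Table~\ref{tab:my_label} drives the same recursive procedure to produce $W_4, W_5$. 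Using the identities $zC(z)^2 = C(z)-1$ and $1-zC(z)^2 = 2-C(z)$, each $W_\xi(z)$ may be rewritten as $C(z)\,\bar R_\xi(C(z))/(2-C(z))^{2\xi-1}$ for an explicit polynomial $\bar R_\xi$.

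Next, set $\alpha(c) := 1/p_5(1/c) = \sum_{k\ge 0}\alpha_k c^{-k}$ with $\alpha_0 = 1$, so that
\[
    M_{\mu^p}(\sqrt{z}) = \sum_{\xi = 0}^{5} c^{-\xi}\, W_\xi\bigl(\alpha(c)\, z\bigr) + O(c^{-6}).
\]
Each summand is expanded in powers of $c^{-1}$ by writing $C(\alpha(c) z) = C(z) + \Delta(z,c)$ and using the implicit relation $C(\alpha z) = 1 + \alpha z\,C(\alpha z)^2$ to determine $\Delta(z,c)$ recursively as a power series in $c^{-1}$ whose coefficients are rational functions of $C(z)$ with denominators powers of $2-C(z)$. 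Substituting this expansion into $W_\xi\bigl(\alpha(c)z\bigr) = C(\alpha(c)z)\,\bar R_\xi(C(\alpha(c)z))/(2-C(\alpha(c)z))^{2\xi-1}$ gives $W_\xi(\alpha(c)z) = \sum_{k\ge 0} c^{-k}\, w_{\xi,k}(z)$, where each $w_{\xi,k}(z)$ is $C(z)$ times a rational function of $C(z)$ with denominator a power of $2-C(z)$. Collecting terms yields $V_i(z) = \sum_{\xi + k = i} w_{\xi,k}(z)$ for $i \le 5$.

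The heart of the argument is the verification that, with the specific coefficients $1,1,1,4,33,386$ of $p_5(x)$, every negative power of $2-C(z)$ in $V_i(z)$ cancels for $i=0,\dots,5$, so that $V_i(z)$ reduces to $C(z)$ times a polynomial in $C(z)-1 = z C(z)^2$; direct simplification then yields the advertised polynomials $Q_0,\dots,Q_5$. This cancellation is the main obstacle: the coefficient $[x^k]p_5(x)$ is fixed inductively so as to kill the leading pole at $C(z) = 2$ contributed at order $c^{-k}$ by $W_k$ and by the derivatives of the lower-excess terms $W_0,\dots,W_{k-1}$, and one must then verify that this local prescription also annihilates every subleading pole simultaneously up to order $c^{-5}$. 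Each such verification is a finite computation well suited to a computer algebra system; that a single uniform polynomial $p_5$ achieves the cancellation at all five orders is precisely the content of Conjecture~\ref{conj} truncated at $k=5$.
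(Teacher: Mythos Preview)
Your proposal is correct and follows essentially the same route as the paper: both reduce the proposition to a finite symbolic computation by expanding $M_{\mu^{p_5}}(\sqrt{z})=\sum_{\xi\ge 0}c^{-\xi}W_\xi\bigl(z\,q_5(1/c)\bigr)$ with $q_5=p_5^{-1}$ to order $c^{-5}$ and then checking that the poles at $1-zC(z)^2=0$ cancel.

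The only organizational difference is how the Taylor expansion in $c^{-1}$ is carried out. The paper applies Fa\`a di Bruno's formula directly, obtaining
\[
    [c^{-i}]M_{\mu^{p_5}}(\sqrt{z})=W_i(z)+\sum_{\xi=0}^{i-1}\frac{1}{(i-\xi)!}\sum_{k=1}^{i-\xi}z^kW_\xi^{(k)}(z)\,\hat B_{i-\xi,k}(b_1,\dots,b_{i-\xi-k+1}),
\]
so that each $V_i$ is an explicit linear combination of the derivatives $z^kW_\xi^{(k)}(z)$; this pairs nicely with the later Lemma~\ref{a:lem:derivs}, which controls the pole order of those derivatives and explains the partial cancellation in Theorem~\ref{thm:reduc}. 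Your approach instead perturbs $C$ itself via $C(\alpha(c)z)=C(z)+\Delta(z,c)$ and substitutes into the closed rational form $C\,\bar R_\xi(C)/(2-C)^{2\xi-1}$. Both are equivalent reorganizations of the same Taylor expansion, and both terminate in a finite algebraic check (which the paper, like you, relegates to routine verification). The paper's route has the slight advantage that it isolates the dependence on the coefficients of $p_5$ through the Bell polynomials, making transparent how each successive coefficient $a_k$ is determined; your route has the advantage of never having to differentiate the $W_\xi$ symbolically.
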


The computation of $p_5(x)$ is straight forward and can be extended to an algorithm that computes a sequence of coefficients $(a_n)_{n\geq 0}$ with $a_0=1$ such that a scaling by the polynomial $\sum_{i=0}^N a_i x^i$ satisfies Conjecture~\ref{conj} up to $i=N$ (if it is true). 
In particular, for a polynomial $p_N(x) = \sum_{i=0}^N a_i x^i$, we have
    \[
        M_{\mu^p}\left(\sqrt{z}\right) = \sum_{\ell \geq 0} m_{2\ell} \frac{z^{\ell}}{p_N(1/c)^\ell} = \sum_{\xi\geq 0} \frac{1}{c^\xi} W_\xi\left( \frac{z}{p_N(1/c)} \right).
    \]
    The coefficient at $c^{-i}$ is therefore given by derivatives of the functions $(W_\xi(z))_{0\leq i\leq \xi}$. To simplify the computation, we define $q_N(x) = \sum_{i\geq 0} b_ix^i = p_N(x)^{-1}$ to be the inverse of $p_N(x)$ in the ring of formal power series. Then we have by Faà di Bruno's formula,
    \begin{align}
        \left[c^{-i}\right]M_{\mu^p}\left(\sqrt{z}\right) &=  \sum_{\xi = 0}^{i}\frac{1}{(i-\xi)!}\frac{\partial^{i-\xi} }{\partial x^{i-\xi}} W_\xi\big(zq_N(x)\big)\bigg|_{x=0}\nonumber \\
        &= W_{i}(z)+\sum_{\xi = 0}^{i-1} \frac{1}{(i-\xi)!} \sum_{k=1}^{i-\xi}z^kW_\xi^{(k)}\left(z\right)\hat{B}_{i-\xi,k}\left(b_1,b_2,\dots,b_{i-\xi-k+1}\right), \label{eq:faa}
    \end{align}
    where $\hat{B}_{n,k}(x_1,x_2,\dots,x_{n-k+1})$ denote the ordinary Bell polynomials.
    Note that the equation of $[c^{-i}]$ only depends on the coefficients $b_1,b_2,\dots, b_i$. Thus, if we scale by $p_N-a_Nx^N$ instead of $p_{N}$ the first $N-1$ equations will not be affected. Hence, each $b_i$ and consequently the coefficients $a_1,a_2,\dots,a_i$ are uniquely defined by $i$ differential equations in $(W_\xi(z))_{0\leq \xi\leq i}$ (if there is a valid choice). For $p_5(x)$ and $V_i(z), 0\leq i\leq 5$ we have in particular 
    \[
        q_5(x) = 1-x-3x^3-26x^4-324x^5+O(x^6)
    \]
    and therefore
\begin{align*}
    V_0(z) &= W_0(z)\\
    V_1(z) &= W_1(z)-zW_0'(z) \\
    V_2(z) &= W_2(z)-zW_1'(z)+\frac{z^2}{2!}W_0''(z)\\
    V_3(z) &= W_3(z)-zW_2^{(1)}(z)+\frac{z^2}{2!}W^{(2)}_1(z)-\frac{z^3}{3!}W_0^{(3)}-3zW_0^{(1)}\\
    V_4(z) &= W_4(z)-zW_3^{(1)}(z)+\frac{z^2}{2!}W^{(2)}_2(z)-\frac{z^3}{3!}W_1^{(3)}-3zW_1^{(1)}+\frac{z^4}{4!}W_0^{(4)}(z)\\
    &\quad+\frac{72}{4!}z^2W_0^{(2)}(z)-26zW_0^{(1)}(z)\\
    V_5(z) &= W_5(z)-zW_4^{(1)}(z)+\frac{z^2}{2!}W^{(2)}_3(z)-\frac{z^3}{3!}W_2^{(3)}-3zW_2^{(1)}+\frac{z^4}{4!}W_1^{(4)}(z)\\ &\quad +\frac{72}{4!}z^2W_1^{(2)}(z)-26zW_1^{(1)}(z)-\frac{z^5}{5!}W_0^{(5)}(z)-\frac{3}{2}z^3W_0^{(3)}(z)+26z^2W_0^{(2)}(z)-324zW_0^{(1)}(z).
\end{align*}
It is an easy exercise to check that these expression indeed reduce to the polynomials $Q_i(x)$ in $zC(z)^2$ multiplied by $C(z)$ as mentioned in Proposition~\ref{cor:Vi}.\\
Further, this does not seem to be the limit. Denominators consistently cancel and we are able to choose a coefficient $a_n$ which would satisfy Conjecture~\ref{conj} for $n=6$ and $n=7$ as well. However, since the polynomials grow rapidly, we decided to stop at $n=5$ in this paper and prove Theorem~\ref{thm:expand} as a corollary.

\begin{proof}[Proof of Theorem~\ref{thm:expand}]
    The generating function of the moments of $\Lambda_{f(1/c)}(\mu^c)$ is given by $M_{\mu^c}\left(\frac{z}{f(1/c)}\right)$. Note that
    \[
        f(x)^2 = 1+x+x^2+4x^3+33x^4+386x^5+O(x^6)
    \]
    such that we can expand 
    \[
        M_{\mu^c}\left(\frac{z}{f(1/c)}\right) = M_{\mu^{f^2}}(z) = \sum_{i= 0}^5 \frac{1}{c^i}V_i\left(z^2\right)+\sum_{i\geq 6} \frac{1}{c^i}[c^{-i}]M_{\mu^{f^2}}\left(z\right)
    \]
    where the $V_i(z)$ are given by Proposition~\ref{cor:Vi}. Applying the inversion formula to these functions yield densities of signed measures with null mass. 
\end{proof}

To illustrate why the existence of $p_5(x)$ is surprising,
we observe that if
\[
    \tilde{W}_2(z) :=
    W_2(z) + \frac{z^3 C(z)^7}
    {\left(1 - z C(z)^2 \right)^3}
\]
is given instead of $W_2(z)$, then there is no choice for $[x^2] p(x)$
allowing this magical simplification between numerator and denominator
and the reduction to a polynomial.

This example highlights the difficulty of proving the existence of $P(x)$ in Conjecture~\ref{conj}. A combinatorial approach seems reasonable, but we are not aware of any combinatorial meaning of the generating functions $V_i(z)$, nor do we have a combinatorial interpretation of the differential equations which are satisfied by the generating functions $W_\xi(z)$, except for the equation of $V_1(z)$.
Nevertheless the next theorem sheds partial light on \emph{why} the scaling by $p_5(x)$ results in Proposition~\ref{cor:Vi}.
It shows that keeping the same first two coefficients as in $p_5(x)$
but changing the others gives fractions,
in the expansion in $c^{-1}$, with denominators
that are powers of $1 - z C(z)^2$ that are two less than expected.

\begin{theorem}\label{thm:reduc}
Let $p(x) = \sum_{i\geq 0} x^i$.
Then $\hat{V}_i(z) := [c^{-i}] M_{\mu^{p}}\left(\sqrt{z}\right)$
is a polynomial in $zC(z)^2$ multiplied by $C(z)$ for $i=0,1,2$
and for $i\geq 3$ there exist polynomials $\hat{Q}_i(x)$ such that 
\[
    \hat{V}_i(z) = C(z)\frac{\hat{Q}_i(zC(z)^2)}{\left(1-zC(z)^2\right)^{2i-3}}.
\]
\end{theorem}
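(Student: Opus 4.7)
The starting point is the Faà di Bruno expansion~(\ref{eq:faa}) specialized to $p(x)=\sum_{i\geq 0}x^i=1/(1-x)$, whose inverse in the ring of formal power series is $q(x)=1-x$. Because only $b_1=-1$ is nonzero, the ordinary Bell polynomials collapse to $\hat{B}_{k,k}(-1,0,\ldots)=(-1)^k$ and the formula reduces to
\[
    \hat V_i(z) \;=\; \sum_{\xi=0}^{i}\frac{(-z)^{i-\xi}}{(i-\xi)!}\,W_\xi^{(i-\xi)}(z).
\]
The cases $i\in\{0,1,2\}$ are checked directly from the closed forms for $W_0,W_1,W_2$ listed in Section~\ref{sec:decomp}. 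For $i\geq 3$, I would move to the variable $v:=1-zC(z)^2$, so that $C(z)=2-v$, $z=(1-v)/(2-v)^2$ and $d/dz=-(2-v)^3v^{-1}\,d/dv$. By Theorem~\ref{thm:strucT_k} each $W_\xi(z)$ becomes a Laurent polynomial in $v$ with pole at $v=0$ of order exactly $2\xi-1$ for $\xi\geq 1$, and the chain rule shows that $W_\xi^{(k)}$ has pole at $v=0$ of order at most $2\xi-1+2k$ (the $\xi=0$ case is consistent since the constant part of $W_0=2-v$ is annihilated by $D$, and $C^{(k)}$ has pole order $2k-1$). Hence $\hat V_i$ has pole of order at most $2i-1$ at $v=0$, and the theorem reduces to proving that its $v^{-(2i-1)}$ and $v^{-(2i-2)}$ coefficients both vanish.

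For the leading coefficient, Theorem~\ref{thm:strucT_k} gives $K_{\xi,2\xi-2}(x)=\cat(\xi-1)x^{4\xi-2}$, which yields the leading singular term $2\cat(\xi-1)v^{-(2\xi-1)}$ of $\tilde W_\xi(v)$. Combined with the routine formula $D^k(v^{-n})=8^k n(n+2)\cdots(n+2k-2)v^{-(n+2k)}+O(v^{-(n+2k-1)})$ and the elementary identity $\cat(\xi-1)/(2\xi-3)!!=2^{\xi-1}/\xi!$ (valid for $\xi\geq 1$), each summand contributes a multiple of $(-1)^\xi/(\xi!(i-\xi)!)$ once one evaluates the prefactor $(-z)^{i-\xi}/(i-\xi)!$ at $v=0$. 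The $\xi=0$ piece, coming from $D^iC(z)$, fits the same template, and the total becomes the binomial identity
\[
    \sum_{\xi=0}^{i}(-1)^\xi\binom{i}{\xi}=(1-1)^i=0, \qquad i\geq 1.
\]

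For the next-to-top coefficient I would combine, for each $\xi$, two singular sources: the leading term $2\cat(\xi-1)v^{-(2\xi-1)}$ of $\tilde W_\xi$ acted on by the subleading part of $D^{i-\xi}$, and the next-to-leading term $-(2\xi-1)\cat(\xi-1)v^{-(2\xi-2)}$---computed from $K_{\xi,2\xi-3}(x)=(3\xi-1)\cat(\xi-1)x^{4\xi-3}$, also in Theorem~\ref{thm:strucT_k}---acted on by the leading part of $D^{i-\xi}$. The $\xi=0$ piece is handled through the top two Laurent coefficients of $C^{(k)}(z)$, which satisfy a short linear recursion in $k$. Two simplifications occur: first, $-z=-1/4+v^2/16+O(v^3)$ has no $v^1$ term near $v=0$, so only the constant part of $(-z)^{i-\xi}/(i-\xi)!$ contributes at this order; second, although $K_{1,-1}$ does not exist, the factor $\phi_{i-1}(0)=8^{i-1}\cdot 0\cdot 2\cdots(2i-4)=0$ automatically suppresses the $\xi=1$ anomaly whenever $i\geq 2$. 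After the same Catalan-to-double-factorial rewriting, the total assembles into a second alternating binomial-type sum that telescopes to zero.

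Once both top coefficients vanish, $\hat V_i$ is a rational function of $v$ whose only finite pole is at $v=0$ of order at most $2i-3$. It remains to note that $\hat V_i$ vanishes at $v=2$: writing $w=2-v$ turns $d/dz$ into $w^3/(2-w)\,\partial_w$, which strictly increases the order of vanishing at $w=0$ of any function to which it is applied, so every $W_\xi^{(i-\xi)}$ retains a factor of $(2-v)$. Consequently $\hat V_i/(2-v)=\hat Q_i(1-v)/v^{2i-3}$ for a polynomial $\hat Q_i$, which is the desired form. The main obstacle in this plan is the near-top cancellation: unlike the leading one, which is immediate from the binomial theorem, it requires simultaneously tracking two different singular contributions and a nontrivial algebraic identity that couples $K_{\xi,2\xi-2}$ with $K_{\xi,2\xi-3}$.
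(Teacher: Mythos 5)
Your overall strategy coincides with the paper's: reduce via Faà di Bruno with $q(x)=1-x$ to $\hat V_i(z)=\sum_{\xi=0}^{i}\frac{(-z)^{i-\xi}}{(i-\xi)!}W_\xi^{(i-\xi)}(z)$, extract the top two singular terms of each $W_\xi^{(k)}$ from Theorem~\ref{thm:strucT_k} (via $K_{\xi,2\xi-2}$ and $K_{\xi,2\xi-3}$), and show that they cancel in the alternating sum. The paper does this globally, by proving in Lemma~\ref{a:lem:derivs} a closed form for the two leading fractions of $W_\xi^{(k)}$ (with the coefficient $(3\xi+k-1)$ obtained by induction on $k$) and then verifying a rational-function identity in $t=zC(z)^2$; you do it locally, as vanishing of the two leading Laurent coefficients at $v=1-zC(z)^2=0$. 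These are the same computation in different coordinates, and your leading-order cancellation (the $(1-1)^i$ identity, using $\cat(\xi-1)/(2\xi-3)!!=2^{\xi-1}/\xi!$) is correct, as is your value $-(2\xi-1)\cat(\xi-1)$ for the next-to-leading Laurent coefficient of $W_\xi$.

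There are, however, two gaps. The substantive one is the next-to-top cancellation, which you describe but do not carry out: this is exactly the content of the theorem (without it you only get denominator exponent $2i-2$, not $2i-3$), and it is not a bare alternating binomial sum. In the paper's coordinates the subleading singular part assembles into $\frac{it}{1-t}+\sum_{\xi=1}^{i}(-1)^{\xi}\binom{i}{\xi}\frac{(2\xi+i-1)t^{2\xi}}{1-t}$ \emph{plus} the overflow from the leading fractions $\frac{t^2+t(1-t^2)^i-t}{(1-t)^2}$, and only the combination of the two simple poles at $t=1$ cancels; neither group vanishes on its own. Reproducing this in your framework requires the exact subleading coefficients of $D^k(v^{-n})$ and of $C^{(k)}$, which you have not computed, so the "second alternating sum that telescopes to zero" remains an assertion. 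The second, smaller gap is the factor-of-$C(z)$ step: since $z=(w-1)/w^2$ has a double pole at $w=2-v=0$, it is not enough that $W_\xi^{(i-\xi)}$ "retains a factor of $(2-v)$"; you need the full vanishing order $1+2(i-\xi)$ at $w=0$ (which your operator identity $\tfrac{d}{dz}=\tfrac{w^3}{2-w}\partial_w$ does supply) to offset the pole of $z^{i-\xi}$ and conclude that $\hat V_i/(2-v)$ is regular at $v=2$, hence that $v^{2i-3}\hat V_i/(2-v)$ is a polynomial in $1-v$.
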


We will use the following lemma to prove Theorem~\ref{thm:reduc}.

\begin{lemma}\label{a:lem:derivs}
    Let $W_\xi(z)$ be the generating function of tree walks with excess $\xi$. Then there exist polynomials $q_{\xi,k}(z,y)$  for all $\xi, k \geq 0$ such that the $k$-derivative of $W_\xi(z)$ is given by
    \begin{align*}
        W_0'(z) &= \frac{C(z)^3}{1-zC(z)^2}\\
        W_0^{(k)}(z) &= \frac{(2k-2)!}{(k-1)!}\left(\frac{z^{k-1}C(z)^{4k-1}}{\left(1-zC(z)^2\right)^{2k-1}} + \frac{kz^{k-2}C(z)^{4k-3}}{\left(1-zC(z)^2\right)^{2k-2}}\right)+ \frac{q_{0,k}(z,C(z))}{\left(1-zC(z)^2\right)^{2k-3}}, \qquad k\geq 2\\
        W^{(k)}_\xi(z) &= \frac{(2\xi+2k-2)!}{(\xi+k-1)!\xi!}\left(\frac{z^{4\xi+k-2}C(z)^{8\xi+4k-3}}{\left(1-zC(z)^2\right)^{2\xi+2k-1}}+\frac{(3\xi+k-1)z^{4\xi+k-3}C(z)^{8\xi+4k-5}}{\left(1-zC(z)^2\right)^{2\xi+2k-2}} \right) \\
        &\qquad + \frac{q_{\xi,k}(z,C(z))}{\left(1-zC(z)^2\right)^{2\xi+2k-3}},\qquad \xi,k\geq 1.
    \end{align*}
\end{lemma}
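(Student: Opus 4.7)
The plan is to proceed by induction on $k$, treating $\xi$ as a fixed parameter. For $\xi \geq 1$ I would take the base case at $k = 0$, where the claim reduces to reading off the two highest-denominator-power contributions from the representation
\begin{equation*}
    W_\xi(z) = C(z)\sum_{s=0}^{2\xi-2}\frac{K_{\xi,s}(zC(z)^2)}{(1-zC(z)^2)^{s+1}}
\end{equation*}
furnished by Theorem~\ref{thm:strucT_k}. Inserting the explicit formulas $K_{\xi,2\xi-2}(x) = \cat(\xi-1)x^{4\xi-2}$ and $K_{\xi,2\xi-3}(x) = (3\xi-1)\cat(\xi-1)x^{4\xi-3}$ and identifying $\cat(\xi-1) = (2\xi-2)!/((\xi-1)!\xi!)$ reproduces exactly the claimed leading and subleading coefficients. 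For $\xi = 0$, the base $k=1$ is simply $W_0' = C' = C^3/(1-u)$ (obtained by differentiating the Catalan equation $C = 1 + zC^2$), and the $k=2$ case can be verified by a one-line direct computation.

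The inductive step rests on the uniform differentiation rule
\begin{equation*}
    \frac{d}{dz}\frac{P(z,C)}{(1-u)^j}
    = \frac{P_z(z,C)}{(1-u)^j}
    + \frac{P_y(z,C)\,C^3}{(1-u)^{j+1}}
    + \frac{j\,P(z,C)\,C^3}{(1-u)^{j+2}},
\end{equation*}
valid for any polynomial $P(z,y)$ evaluated at $y = C(z)$, with $u = zC^2$. This is immediate from $C' = u' = C^3/(1-u)$. I would write the inductive hypothesis as $W_\xi^{(k)} = L_k/(1-u)^{\alpha_k} + M_k/(1-u)^{\alpha_k-1} + R_k/(1-u)^{\alpha_k-2}$ with $\alpha_k = 2\xi+2k-1$ and $R_k = q_{\xi,k}(z,C)$ an unspecified polynomial in $z$ and $C$, and apply the rule to each summand; this distributes nine contributions over the three denominator powers $\alpha_k, \alpha_k+1, \alpha_k+2$.

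The algebraic heart of the matter is the identity $2zC-1 = -(1-u)/C$, which is merely the defining relation $C = 1+zC^2$ in disguise. It yields $\alpha_k L_k C^3 - L_{k+1} = -\alpha_k L_k C^2(1-u)$ at the top band, so the contribution there rewrites as $L_{k+1}/(1-u)^{\alpha_{k+1}}$ plus a correction $\alpha_k L_k C^2/(1-u)^{\alpha_{k+1}-1}$. Combining this correction with the middle-band contributions $L_{k,y}C^3 + (\alpha_k-1)M_k C^3$ and using the recursion $G_{k+1} = 2(2\xi+2k-1)G_k$ for the Catalan-type prefactor $G_k = (2\xi+2k-2)!/((\xi+k-1)!\xi!)$, a short computation shows the middle-band total equals $M_{k+1}$ modulo $(1-u)$, with defect $2G_k(\xi+k-1)(3\xi+k-1)\,z^{4\xi+k-3}C^{8\xi+4k-3}(1-u)$. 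This defect cascades into the bottom band, where together with $L_{k,z}$, $M_{k,y}C^3$, and $(\alpha_k-2)R_k C^3$ it assembles into the desired polynomial $q_{\xi,k+1}(z,C)$. The main technical obstacle is thus the careful juggling of three contributions at each denominator level and the twofold verification of divisibility by $(1-u)$; both reductions, however, are controlled by the single identity $1 - 2zC = (1-u)/C$.
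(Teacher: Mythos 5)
Your approach is essentially the paper's: induction on $k$ using $C'(z) = (zC(z)^2)' = C(z)^3/(1-zC(z)^2)$, reading the base case off Theorem~\ref{thm:strucT_k}, and renormalizing powers of $z$ and $C$ via the Catalan identity $1-2zC(z) = (1-zC(z)^2)/C(z)$ so that each correction cascades exactly one denominator band down. Your middle-band bookkeeping is the same computation the paper performs when it checks $(2\xi+2k-1)+(8\xi+4k-3)+2(3\xi+k-1)(2\xi+2k-2) = 2(2\xi+2k-1)(3\xi+k)$, and your identification of the defect that falls into the $q_{\xi,k+1}$ band is correct.

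One concrete issue: your base case at $k=0$ is false for $\xi=1$. The sum in Theorem~\ref{thm:strucT_k} runs only over $0\leq s\leq 2\xi-2$, so for $\xi=1$ there is no $s=2\xi-3=-1$ term; indeed $W_1(z)=z^2C(z)^5/(1-zC(z)^2)$ has \emph{no} subleading band, whereas the generic subleading coefficient $(3\xi-1)\cat(\xi-1)=2$ you would insert is nonzero. The paper flags exactly this point (``the formula in the lemma does not hold for $k=0$'' when $\xi=1$) and verifies the $\xi=1$, $k=1$ case directly before running the induction. Your induction does survive, but only because the fictitious $M_0$ term can influence the top two bands of $W_1^{(1)}$ only through the factor $\alpha_0-1=2\xi-2$, which vanishes at $\xi=1$; you need to either make that observation or start the induction at $k=1$ for $\xi=1$, rather than asserting that the generic base case holds. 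A second, cosmetic point: $\alpha_kL_kC^3-L_{k+1}=+\alpha_kL_kC^2(1-zC^2)$, not $-$; since the correction term you then carry down has the correct (positive) sign, this is only a typo and does not propagate.
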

\begin{proof}
    We start with $\xi=0$. Since $W_0(z) = C(z)$, we have
    \[
        W_0'(z) = C'(z) = \frac{C(z)^3}{1-zC(z)^2}
    \]
    and further
    \begin{align*}
        W_0''(z) &= \frac{C(z)^3(2zC(z)C'(z)+C(z)^2)}{\left(1-zC(z)^2\right)^2}+\frac{3\,C(z)^2}{1-zC(z)^2}C'(z) = \frac{2zC(z)^7}{\left(1-zC(z)^2\right)^3} +\frac{4\,C(z)^5}{\left(1-zC(z)^2\right)^2}.
    \end{align*}
    Notice, that we gain at most $+1$ in the exponent of the denominator if we take the derivative of a term in the nominator and $+2$ if we take the derivative in the denominator. Hence, if we are just interested in the terms with the highest exponents in the denominator, we can use induction on this form and check that
    \begin{align*}
        W_0^{(k+1)}(z)
        &= \frac{(2k-2)!}{(k-1)!} \left(\frac{2(2k-1)z^{k}C(z)^{4k+3}}{\left(1-zC(z)^2\right)^{2k+1}} +\frac{(2k-1)z^{k-1}C(z)^{4k+1}}{\left(1-zC(z)^2\right)^{2k}}+\frac{(4k-1)z^{k-1}C(z)^{4k+1}}{\left(1-zC(z)^2\right)^{2k}}\right)\\
        &\qquad +\frac{(2k-2)!}{(k-1)!} \left(\frac{2(2k-2)kz^{k-1}C(z)^{4k+1}}{\left(1-zC(z)^2\right)^{2k}}\right)+ \frac{q_{0,k+1}(z,C(z))}{\left(1-zC(z)^2\right)^{2k-1}}\\
        &= \frac{(2k)!}{k!} \left(\frac{z^{k}C(z)^{4k+3}}{\left(1-zC(z)^2\right)^{2k+1}} +\frac{(k+1)z^{k-1}C(z)^{4k+1}}{\left(1-zC(z)^2\right)^{2k}}\right)+ \frac{q_{0,k+1}(z,C(z))}{\left(1-zC(z)^2\right)^{2k-1}}
    \end{align*}
    where $q_{0,k}(x,y)$ is a bivariate polynomial with non-negative coefficients.
    Similarly, we can compute the main contributions of $W_\xi(z)$ since we know from Theorem~\ref{thm:strucT_k}  that for $\xi \geq 2$
    \[
        W_\xi(z) = \cat(\xi-1)C(z)\left(\frac{\left(zC(z)^2\right)^{4\xi-2}}{\left(1-zC(z)^2\right)^{2\xi-1}}+\frac{(3\xi-1)\left(zC(z)^2\right)^{4\xi-3}}{\left(1-zC(z)^2\right)^{2\xi-2}}\right) + \frac{q_{\xi,0}(zC(z)^2)}{\left(1-zC(z)^2\right)^{2\xi-3}},
    \]
    for some polynomial $q_{\xi,0}(x)$, where $\cat(\xi-1)$ denotes the $(\xi-1)$-th Catalan number $\frac{(2\xi-2)!}{\xi!(\xi-1)!}$. Thus, the expression coincides exactly with our formula for $k=0$.
    It is therefore straight forward to check that
    \begin{align*}
        W^{(k+1)}_\xi(z) &= \cat(\xi-1)\frac{(2\xi+2k-2)!(\xi-1)!}{(\xi+k-1)!(2\xi-2)!}\Bigg(\frac{2(2\xi+2k-1)z^{4\xi+k}C(z)^{8\xi+4k+1}}{\left(1-zC(z)^2\right)^{2\xi+2k+1}} \\
        &\qquad+\frac{(2\xi+2k-1)z^{4\xi+k-1}C(z)^{8\xi+4k-1}}{\left(1-zC(z)^2\right)^{2\xi+2k}}
        +\frac{(8\xi+4k-3)z^{4\xi+k-1}C(z)^{8\xi+4k-1}}{\left(1-zC(z)^2\right)^{2\xi+2k}} \\
        &\qquad+\frac{2(3\xi+k-1)(2\xi+2k-2)z^{4\xi+k-1}C(z)^{8\xi+4k-1}}{\left(1-zC(z)^2\right)^{2\xi+2k}}\Bigg)
        \\
        &\qquad+\frac{q_{\xi,k+1}(z,C(z))}{\left(1-zC(z)^2\right)^{2\xi+2k-1}}.
    \end{align*}
    Now we factor out 
    \[
        2(2\xi+2k-1) = \frac{(2\xi+2k)(2\xi+2k-1)}{(\xi+k)}
    \]
    and confirm that
    \begin{align*}
        &\frac{(2\xi+2k-1)+(8\xi+4k-3)+2(3\xi+k-1)(2\xi+2k-2)}{2(2\xi+2k-1)} = 3\xi+k
    \end{align*}
    which yields the desired result
    \begin{align*}
        W^{(k+1)}_\xi(z) &= \cat(\xi-1)\frac{(2\xi+2k)!(\xi-1)!}{(\xi+k)!(2\xi-2)!}\Bigg(\frac{z^{4\xi+k}C(z)^{8\xi+4k+1}}{\left(1-zC(z)^2\right)^{2\xi+2k+1}} +\frac{(3\xi+k)z^{4\xi+k-1}C(z)^{8\xi+4k-1}}{\left(1-zC(z)^2\right)^{2\xi+2k}}\Bigg)
        \\
        &\qquad+\frac{q_{\xi,k+1}(z,C(z))}{\left(1-zC(z)^2\right)^{2\xi+2k-1}}.
    \end{align*}
    Finally, a quick computation confirms that the same holds for $\xi=1$, even though the formula in the lemma does not hold for $k=0$.
\end{proof}
We are now ready to prove Theorem~\ref{thm:reduc}.
\begin{proof}[Proof of Theorem~\ref{thm:reduc}]
    Since we chose $p(x) = \sum_{k\geq 0}x^k$, its formal inverse equals $q(x) = \sum_{k\geq 0} b_kx^k = 1-x$. That means Equation~(\ref{eq:faa})
    \[
        \left[c^{-i}\right]M_{\mu^p}\left(\sqrt{z}\right) 
        = W_{i}(z)+\sum_{\xi = 0}^{i-1} \frac{1}{(i-\xi)!} \sum_{k=1}^{i-\xi}z^kW_\xi^{(k)}\left(z\right)\hat{B}_{i-\xi,k}\left(b_1,b_2,\dots,b_{i-\xi-k+1}\right),
    \]
    reduces to
    \[
        \left[c^{-i}\right]M_{\mu^p}\left(\sqrt{z}\right) 
        = \sum_{\xi = 0}^{i} \frac{(-1)^{i-\xi}}{(i-\xi)!} z^{i-\xi}W_\xi^{(i-\xi)}\left(z\right).
    \]
    By Lemma~\ref{a:lem:derivs}, we further have
    \begin{align*}
        \left[c^{-i}\right]M_{\mu^p}\left(\sqrt{z}\right) 
        &=  \frac{(-1)^i}{i!}\frac{(2i-2)!}{(i-1)!}\left(\frac{z^{2i-1}C(z)^{4i-1}}{\left(1-zC(z)^2\right)^{2i-1}}+\frac{iz^{2i-2}C(z)^{4i-3}}{\left(1-zC(z)^2\right)^{2i-2}}\right) \\
        &\quad+ \sum_{\xi=1}^i \frac{(-1)^{i-\xi}}{(i-\xi)!\xi!}\frac{(2i-2)!}{(i-1)!}\frac{z^{2\xi+2i-2}C(z)^{4\xi+4i-3}}{\left(1-zC(z)^2\right)^{2i-1}} \\
        &\quad+ \sum_{\xi=1}^i \frac{(-1)^{i-\xi}}{(i-\xi)!\xi!}\frac{(2i-2)!}{(i-1)!}\frac{(2\xi+i-1)z^{2\xi+2i-3}C(z)^{4\xi+4i-5}}{\left(1-zC(z)^2\right)^{2i-2}} \\
        &\quad+ \sum_{\xi=0}^i \frac{(-1)^{i-\xi}}{(i-\xi)!} \frac{z^{i-\xi}q_{\xi,i-\xi}\left(z,C(z)\right)}{\left(1-zC(z)^2\right)^{2i-3}}.
    \end{align*}
    It is straightforward to check for $i=0,1$ that these expressions are polynomials. For $i\geq 2$, we factor out 
    \[
        \frac{(-1)^i(2i-2)!z^{2i-3}C(z)^{4i-5}}{i!(i-1)!(1-zC(z)^2)^{2i-3}}
    \] 
    in the first three lines and focus on 
    \begin{align*}
        &\Bigg(\frac{z^2C(z)^4}{\left(1-zC(z)^2\right)^{2}}+ \sum_{\xi=1}^i \frac{(-1)^{i-\xi}i!}{(i-\xi)!\xi!}\frac{(\xi-1)!}{(2\xi-2)!}\frac{z^{2\xi}C(z)^{4\xi+2}}{\left(1-zC(z)^2\right)^{2}} \\
        &\quad+\frac{izC(z)^2}{\left(1-zC(z)^2\right)} + \sum_{\xi=1}^i \frac{(-1)^{i-\xi}i!}{(i-\xi)!\xi!}\frac{(\xi-1)!}{(2\xi-2)!}\frac{(2\xi+i-1)z^{2\xi-1}C(z)^{4\xi}}{\left(1-zC(z)^2\right)}\Bigg).
    \end{align*}
    If the above is a polynomial in $z,C(z)$, we are done. 
    Now, we substitute $t=zC(z)^2$ to shorten notation and verify that 
    \begin{align*}
        &\frac{t^2}{\left(1-t\right)^2} + \sum_{\xi=1}^i (-1)^{\xi}\binom{i}{\xi}\frac{t^{2\xi+1}}{(1-t)^2} + \frac{it}{1-t}+\sum_{\xi=1}^i (-1)^{-\xi}\binom{i}{\xi} \frac{(2\xi+i-1)t^{2\xi}}{1-t}\\
        &= \frac{t^2+t(1-t^2)^i-t}{(1-t)^2} + \frac{i(t+(1-t^2)^i-1)-(1-t^2)^i+1}{1-t}+\frac{t\frac{d}{dt}(1-t^2)^{i}}{1-t}\\
        &= \frac{t(1+t)(1-t^2)^{i-1}+(i-1)(1-t^2)^i-i(1-t)-2it^2(1-t^2)^{i-1}}{1-t}\\
        &= t(1+t)^2(1-t^2)^{i-2}+(i-1)(1+t)(1-t^2)^{i-1}-2it^2(1+t)(1-t^2)^{i-2}-i
    \end{align*}
\end{proof}

        \section{Computational experiments}
        \label{sec:computational}

As curious as Conjecture~\ref{conj} is from a purely mathematical perspective, the alternative scaling of the matrices of the spectral measure seems to have advantages in the approximation of the limit measure $\bar{\mu}^c$. There are certain important details to take into account though.

Since the $V_i(z)$ in Corollary~\ref{cor:Vi} are polynomials in
$z C(z)^2$ multiplied by $C(z)$,
the evaluation $\frac{1}{z} V_i\left(\frac{1}{z}\right)$
is a polynomial in the Stieltjes transform of the semicircle law.
The inversion formula~(\ref{eq:inversion})
therefore always yields densities of signed measures with zero mass
for these Stieltjes transforms.
In particular, we obtain a sequence of densities $f_i(z)$
from the Stieltjes transforms
$\frac{1}{z}V_i\left(\frac{1}{z}\right)$
for $1\leq i \leq 5$ which are given by
\begin{align*}
    f_0(z) &= \frac{1}{2\pi}\sqrt{4-z^2}\;\mathds{1}_{(-2,2)}(z),\\
    f_{1}(z) &= \frac{1}{2\pi}\big(1-z^2\big)\sqrt{4-z^2}\;\mathds{1}_{(-2,2)}(z),\\
    f_{2}(z) &= \frac{1}{2\pi}\big(1-6z^2+5z^4-z^6\big)\sqrt{4-z^2}\;\mathds{1}_{(-2,2)}(z),\\
    f_{3}(z) &= \frac{1}{2\pi}\left(9-140z^2+358z^4-299z^6+98z^8 -11z^{10}\right)\sqrt{4-z^2}\;\mathds{1}_{(-2,2)}(z),\\
    f_4(z) &= \frac{1}{2\pi}\big(56+1602z^2-8625z^4+16004z^6\\
    &\qquad \quad -13447z^8+5624z^{10}-1143z^{12}+90z^{14}\big)\sqrt{4-z^2}\;\mathds{1}_{(-2,2)}(z),\\
    f_5(z) &=\frac{1}{2\pi}\big(442-17946z^2+171911z^4-574676z^6+904447z^8\\
    &\qquad \quad
    -768354z^{10} +373181z^{12} -103622z^{14} +15298z^{16} -931z^{18} \big)\sqrt{4-z^2}\;\mathds{1}_{(-2,2)}(z). 
\end{align*}

Now, it is easy to see that the coefficients of the polynomial factors of the $f_i(z)$ grow rapidly and that these functions oscillate quite heavily. Hence, there exists a largest integer $t(c)$ depending on $c$ such that 
\[
    \sum_{\xi = 0}^{t(c)} \frac{1}{c^\xi} f_\xi(z)
\]
takes non-negative values on the interval $(-2,2)$ and is therefore the density of a probability measure. Experiments for $c=5, 10, 20$ show that this $t(c)$ seems to be the right scaling for $\bar{\mu}^c$ such that most of the eigenvalues are exactly in the interval $(-2,2)$.
This is reminiscent of divergent asymptotic expansions (see \eg the introduction of \cite{olverasymptotics}). For example, consider Stirling's asymptotic expansion $n! \approx n^n e^{-n} \sqrt{2 \pi n} (s_0 + s_1 n^{-1} + s_2 n^{-2} + \cdots)$ where $(s_0, s_1, s_2, \ldots) = (1, \frac{1}{12}, \frac{1}{288}, \ldots)$. For any $n$, there exists $t(n)$ such that the accuracy of the approximation of order $k$ improves for $k$ from $0$ to $t(n)$, then decreases with $k$.

\begin{table}[ht]
    \centering
    \begin{tabular}{l | c| c| c}
        sample & \multirow{2}{*}{$f_0(z)$} & \multirow{2}{*}{$f_0(z)+\frac{1}{5}f_1(z)$} & \multirow{2}{*}{$f_0(z)+\frac{1}{5}f_1(z)+\frac{1}{25}f_2(z)$}\\
        size &&&\\\hline 
        & \multirow{8}{*}{\includegraphics[width=0.28\textwidth, page=1]{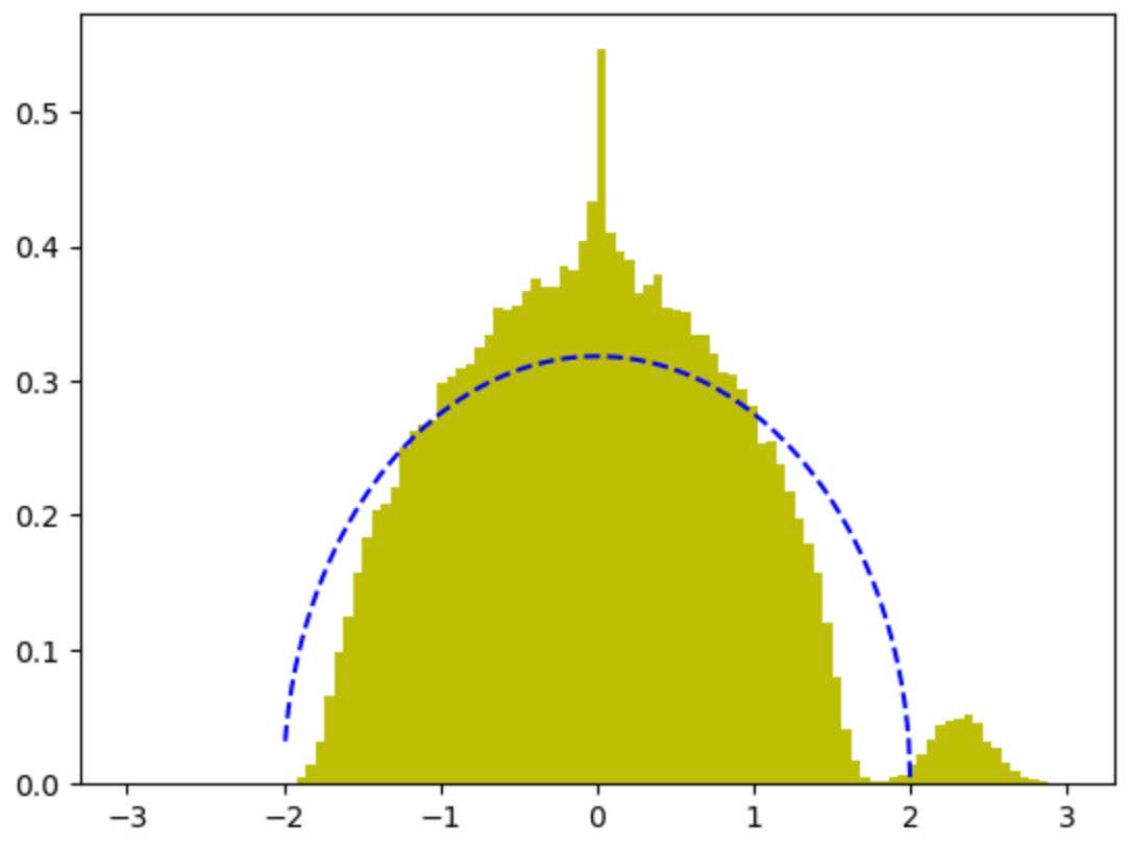}} & \multirow{8}{*}{\includegraphics[width=0.28\textwidth, page=2]{c5plots.pdf}} & \multirow{8}{*}{\includegraphics[width=0.28\textwidth, page=3]{c5plots.pdf}} \\
        &&&\\
        &&&\\
        n=40 &&&\\
        N=2500 & & & \\
        &&&\\
        &&&\\
        &&&\\
        \hline 
        & \multirow{8}{*}{\includegraphics[width=0.28\textwidth, page=4]{c5plots.pdf}} & \multirow{8}{*}{\includegraphics[width=0.28\textwidth, page=5]{c5plots.pdf}} & \multirow{8}{*}{\includegraphics[width=0.28\textwidth, page=6]{c5plots.pdf}} \\
        &&&\\
        &&&\\
        n=200 &&&\\
        N=500 & & & \\
        &&&\\
        &&&\\
        &&&\\
        \hline
        & \multirow{8}{*}{\includegraphics[width=0.28\textwidth, page=7]{c5plots.pdf}} & \multirow{8}{*}{\includegraphics[width=0.28\textwidth, page=8]{c5plots.pdf}} & \multirow{8}{*}{\includegraphics[width=0.28\textwidth, page=9]{c5plots.pdf}} \\
        &&&\\
        &&&\\
        n=1000 &&&\\
        N=100 & & & \\
        &&&\\
        &&&\\
        &&&\\
        \hline
    \end{tabular}
    \caption{Histograms (100 bins) of eigenvalues of $N$ random adjacency matrices of $G(n,5/n)$ compared to the densities $f_0(z), f_1(z)$ and $f_2(z)$.}
    \label{tab:exp_c=5}
\end{table}

\begin{table}[ht]
    \centering
    \begin{tabular}{l | c| c| c}
        sample & \multirow{2}{*}{$\tilde{f}_1(z) = f_0(z)+\frac{1}{10}f_1(z)$} & \multirow{2}{*}{$\tilde{f}_2(z) = \tilde{f}_1(z)+\frac{1}{100}f_2(z)$} & \multirow{2}{*}{$\tilde{f}_3(z) = \tilde{f}_2(z)+\frac{1}{1000}f_3(z)$}\\
        size &&&\\\hline 
        & \multirow{8}{*}{\includegraphics[width=0.28\textwidth]{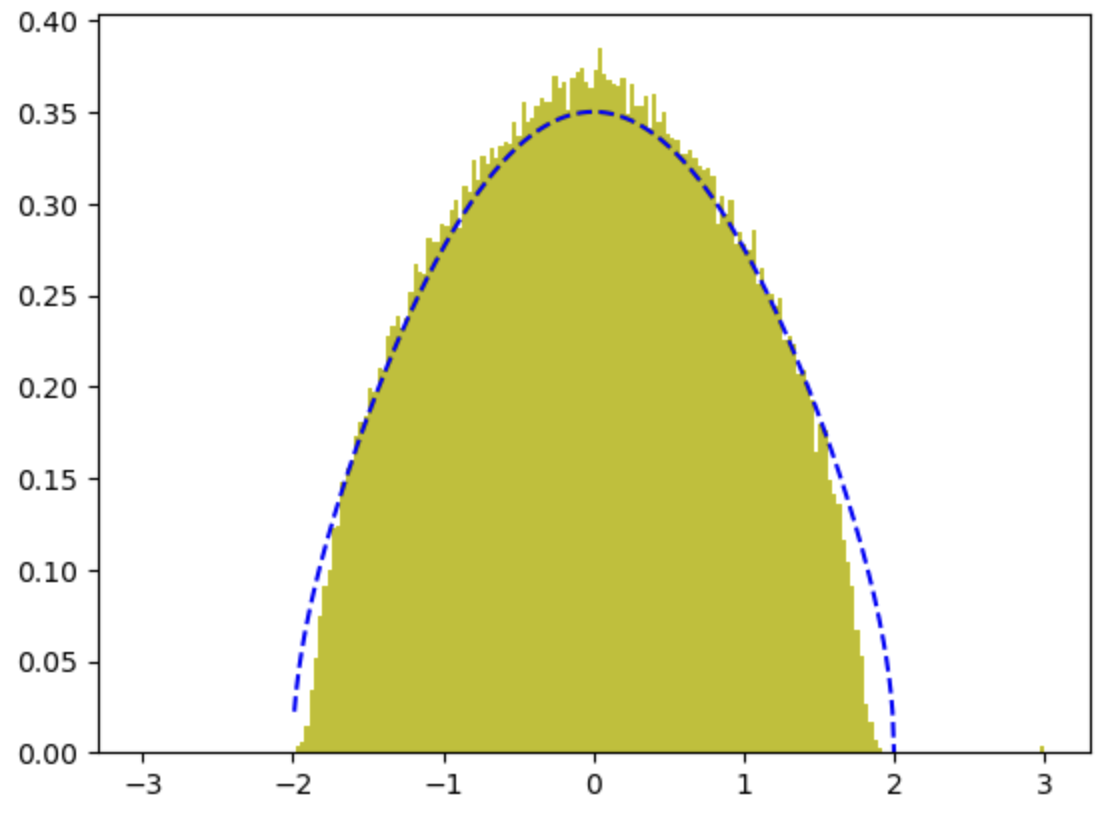}} & \multirow{8}{*}{\includegraphics[width=0.28\textwidth]{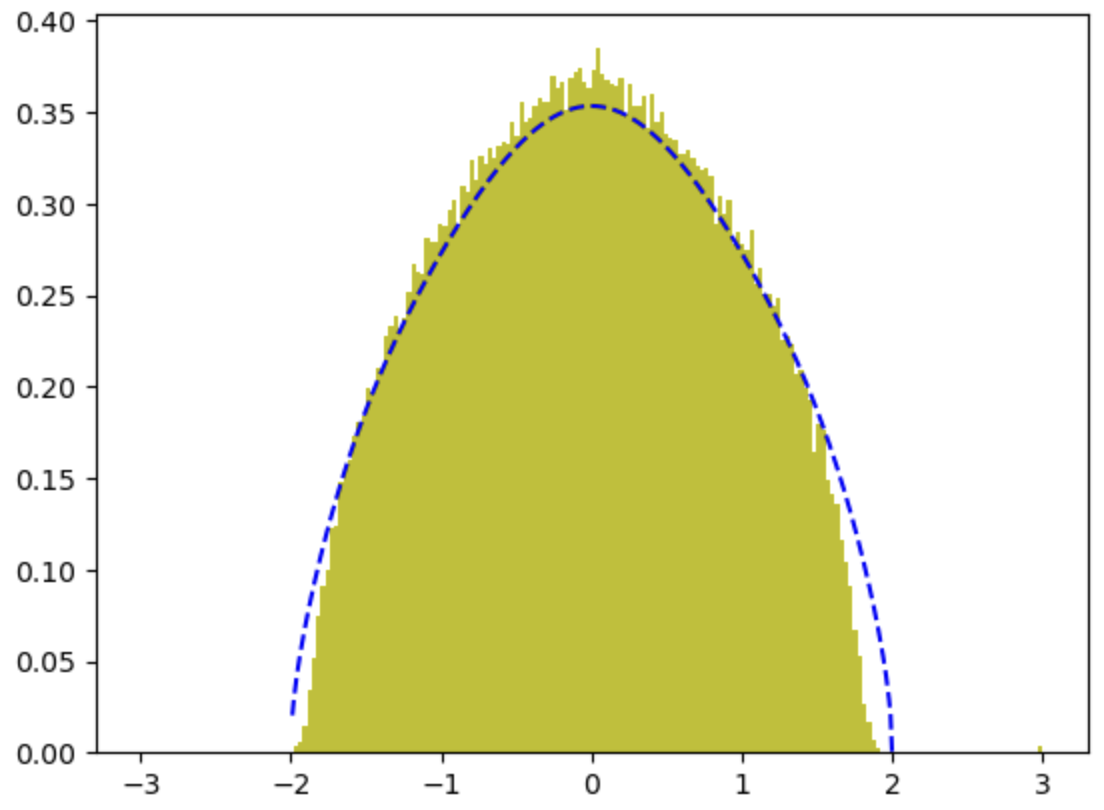}} & \multirow{8}{*}{\includegraphics[width=0.28\textwidth]{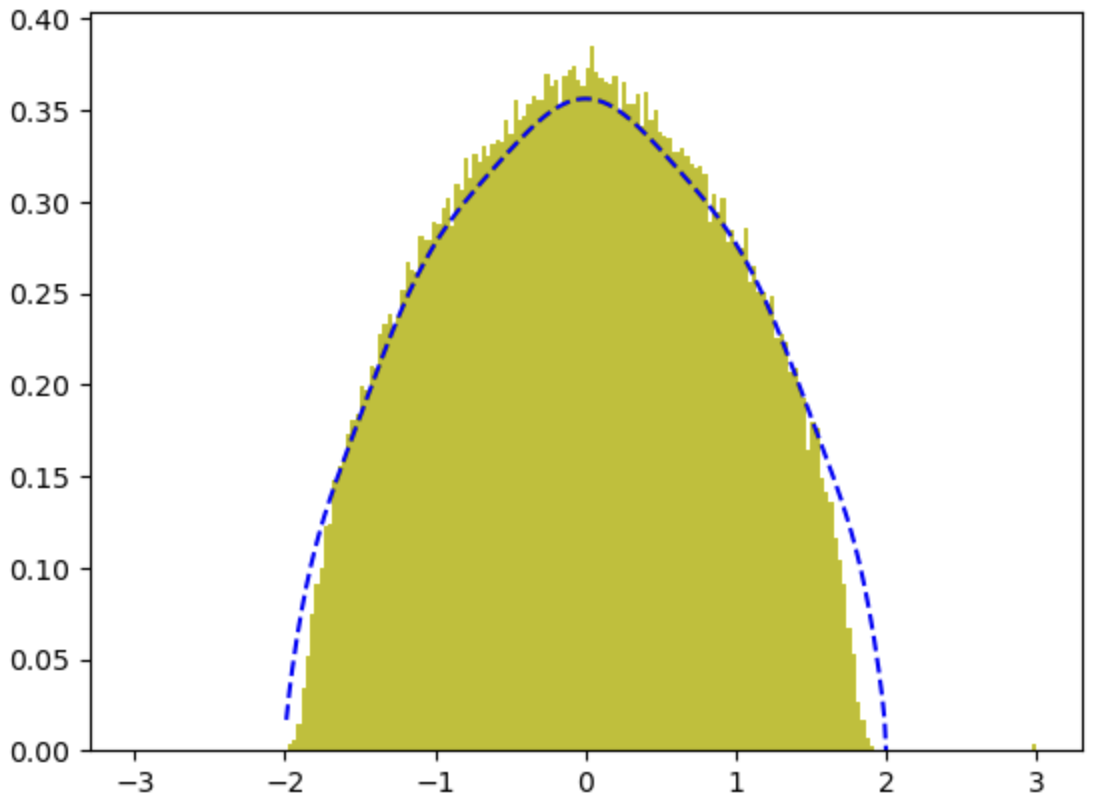}} \\
        &&&\\
        &&&\\
        n=125 &&&\\
        N=800 & & & \\
        &&&\\
        &&&\\
        &&&\\
        \hline 
        & \multirow{8}{*}{\includegraphics[width=0.28\textwidth, page=8]{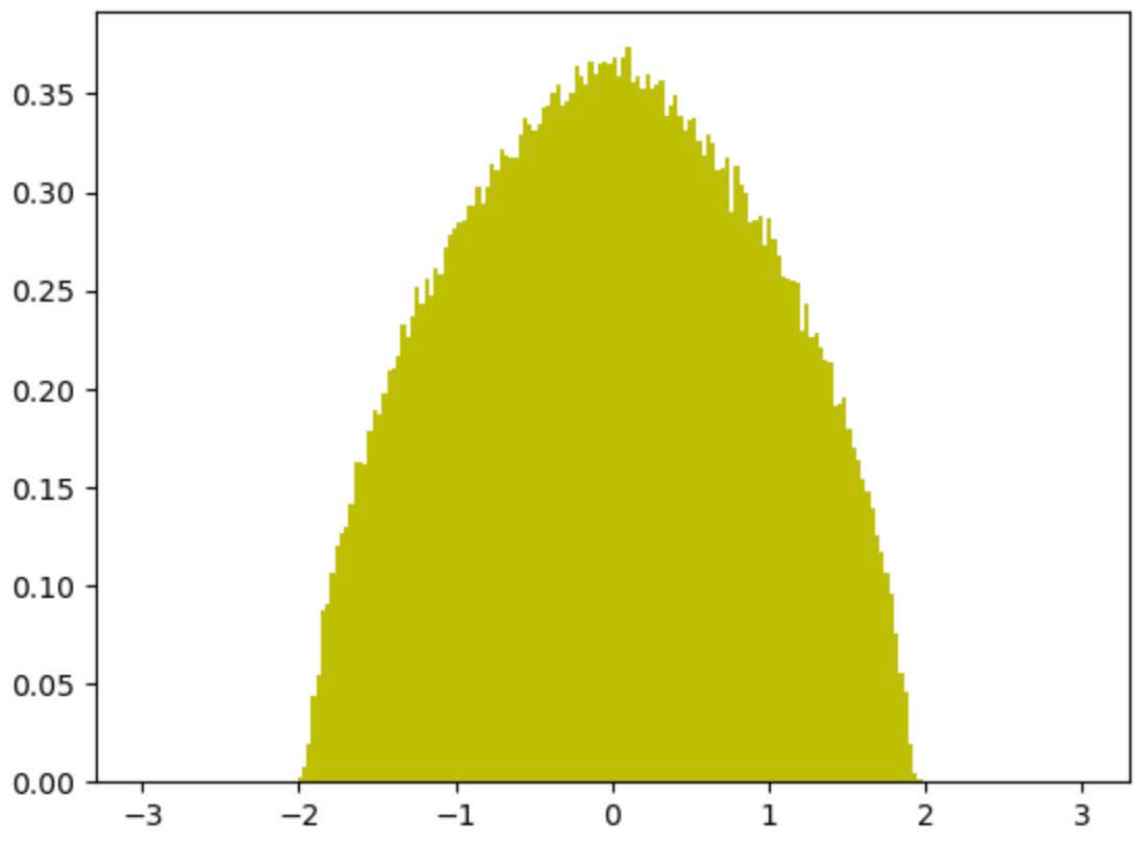}} & \multirow{8}{*}{\includegraphics[width=0.28\textwidth, page=9]{c10plots.pdf}} & \multirow{8}{*}{\includegraphics[width=0.28\textwidth, page=10]{c10plots.pdf}} \\
        &&&\\
        &&&\\
        n=500 &&&\\
        N=200 & & & \\
        &&&\\
        &&&\\
        &&&\\
        \hline
        & \multirow{8}{*}{\includegraphics[width=0.28\textwidth, page=18]{c10plots.pdf}} & \multirow{8}{*}{\includegraphics[width=0.28\textwidth, page=19]{c10plots.pdf}} & \multirow{8}{*}{\includegraphics[width=0.28\textwidth, page=20]{c10plots.pdf}} \\
        &&&\\
        &&&\\
        n=2000 &&&\\
        N=50 & & & \\
        &&&\\
        &&&\\
        &&&\\
        \hline
    \end{tabular}
    \caption{Histograms (200 bins) of eigenvalues of $N$ random adjacency matrices of $G(n,10/n)$ compared to the densities $\tilde{f}_1(z), \tilde{f}_2(z)$ and $\tilde{f}_3(z)$.}
    \label{tab:exp_c=10}
\end{table}
Further, the densities seem to approximate the histograms of eigenvalues of sampled matrices  quite well. In Table~\ref{tab:exp_c=5}, we can see histograms of random matrices with $p=5/n$. In each row, we sampled $N$ matrices of size $n\times n$ such that we always obtained 100000 eigenvalues. They were scaled by $\sqrt{c(1+1/c)}$ such that we would expect a reasonable approximation by the density $f_0(z)+1/cf_1(z)+1/c^2f_2(z)$. Indeed, in the columns we see the histograms of the eigenvalues in green and the densities given by the approximations of
$f_0(z)$,
$f_0(z) + c^{-1} f_1(z)$
and $f_0(z) + c^{-1} f_1(z) + c^{-2} f_2(z)$.
As $n$ grows, the curve of the latter fits the histogram best.
Another example is illustrated in Table~\ref{tab:exp_c=10}. In this case, $c=10$ and $t(c) = 3$ such that we consider the densities
$f_0(z) + c^{-1} f_1(z)$,
$f_0(z) + c^{-1} f_1(z) + c^{-2} f_2(z)$
and $f_0(z) + c^{-1} f_1(z) + c^{-2} f_2(z) + c^{-3} f_3(z)$.

%%
%% Bibliography
%%

%% Please use bibtex, 
%\bibliographystyle{plain}

\bibliography{treewalks.bib}

\begin{thebibliography}{10}

\bibitem{physics}
Michel Bauer and Olivier Golinelli.
\newblock Random incidence matrices: Moments of the spectral density.
\newblock {\em Journal of Statistical Physics}, 103:301–337, 2001.
\newblock \href {https://doi.org/10.1023/A:1004879905284}
  {\path{doi:10.1023/A:1004879905284}}.

\bibitem{BLL97}
Fran\c{c}ois Bergeron, Gilbert Labelle, and Pierre Leroux.
\newblock {\em Combinatorial Species and Tree-like Structures}.
\newblock Cambridge University Press, 1997.

\bibitem{spec1}
Charles Bordenave, Marc Lelarge, and Justin Salez.
\newblock The rank of diluted random graphs.
\newblock {\em The Annals of Probability 39(3)}, pages 1097--1121, 2011.

\bibitem{spec3}
Charles Bordenave, A.~Sen, and B.~Vir\'ag.
\newblock Mean quantum percolation.
\newblock {\em Journal of the European Mathematical Society 19}, page
  3679–3707, 2017.

\bibitem{chayes1986density}
JT~Chayes, L~Chayes, Judy~R Franz, James~P Sethna, and SA~Trugman.
\newblock On the density of state for the quantum percolation problem.
\newblock {\em Journal of Physics A: Mathematical and General}, 19(18):L1173,
  1986.

\bibitem{spec5}
Simon Coste and Justin Salez.
\newblock Emergence of extended states at zero in the spectrum of sparse random
  graphs.
\newblock {\em Annals of Probability}, 2021.

\bibitem{edelman2005random}
Alan Edelman and N~Raj Rao.
\newblock Random matrix theory.
\newblock {\em Acta numerica}, 14:233--297, 2005.

\bibitem{Bushygraphs}
Nathana\"el Enriquez and Laurent M\'enard.
\newblock Spectra of large diluted but bushy random graphs.
\newblock {\em Random Structures \& Algorithms}, 49, 10 2016.
\newblock \href {https://doi.org/10.1002/rsa.20618}
  {\path{doi:10.1002/rsa.20618}}.

\bibitem{FS09}
Philippe Flajolet and Robert Sedgewick.
\newblock {\em Analytic Combinatorics}.
\newblock Cambridge University Press, 2009.

\bibitem{furedi1981eigenvalues}
Zolt{\'a}n F{\"u}redi and J{\'a}nos Koml{\'o}s.
\newblock The eigenvalues of random symmetric matrices.
\newblock {\em Combinatorica}, 1:233--241, 1981.

\bibitem{gessel2016lagrange}
Ira~M Gessel.
\newblock Lagrange inversion.
\newblock {\em Journal of Combinatorial Theory, Series A}, 144:212--249, 2016.

\bibitem{mckay1981expected}
Brendan~D McKay.
\newblock The expected eigenvalue distribution of a large regular graph.
\newblock {\em Linear Algebra and its applications}, 40:203--216, 1981.

\bibitem{olverasymptotics}
Frank Olver.
\newblock {\em Asymptotics and special functions}.
\newblock CRC Press, 1997.

\bibitem{spec2}
Justin Salez.
\newblock Every totally real algebraic integer is a tree eigenvalue.
\newblock {\em Journal of Combinatorial Theory, Series B}, pages 249--256,
  2015.

\bibitem{spec4}
Justin Salez.
\newblock Spectral atoms of unimodular random trees.
\newblock {\em Journal of the European Mathematical Society 22}, page
  345–363, 2020.

\bibitem{seq}
N.J.A. Sloane.
\newblock sequence a294439.
\newblock {\em The on-line encyclopedia of integer sequences}, 2017.

\bibitem{tran2013sparse}
Linh~V Tran, Van~H Vu, and Ke~Wang.
\newblock Sparse random graphs: Eigenvalues and eigenvectors.
\newblock {\em Random Structures \& Algorithms}, 42(1):110--134, 2013.

\bibitem{tulino2004random}
Antonia~M Tulino, Sergio Verd{\'u}, et~al.
\newblock Random matrix theory and wireless communications.
\newblock {\em Foundations and Trends{\textregistered} in Communications and
  Information Theory}, 1(1):1--182, 2004.

\bibitem{wigner1967random}
Eugene~P Wigner.
\newblock Random matrices in physics.
\newblock {\em SIAM review}, 9(1):1--23, 1967.

\bibitem{wigner1955characteristic}
Jen\H{o}~P\'al Wigner.
\newblock Characteristic vectors of bordered matrices with infinite dimensions.
\newblock {\em Annals of Mathematics}, 62, 1955.

\bibitem{wigner1958distribution}
Jen\H{o}~P\'al Wigner.
\newblock On the distribution of the roots of certain symmetric matrices.
\newblock {\em Annals of Mathematics}, pages 325--327, 1958.

\bibitem{wright}
Edward~Maitland Wright.
\newblock The number of connected sparsely edged graphs i-iii.
\newblock {\em Journal of Graph Theory}, 1977-80.

\bibitem{inna_z}
Inna Zakharevich.
\newblock A generalization of wigner’s law.
\newblock {\em Communications in Mathematical Physics}, 268:403–414, 2006.
\newblock \href {https://doi.org/10.1007/s00220-006-0074-5}
  {\path{doi:10.1007/s00220-006-0074-5}}.

\end{thebibliography}

\end{document}